\newtheorem{theorem}{Theorem}[section]
\newtheorem{lemma}[theorem]{Lemma}
\newtheorem{remark}[theorem]{Remark}
\DeclareMathOperator{\acosh}{acosh}
\colorlet{linkequation}{red}
\newcommand*{\SavedEqref}{}
\let\SavedEqref\eqref
\renewcommand*{\eqref}[1]{%
  \begingroup
    \hypersetup{
      linkcolor=linkequation,
      linkbordercolor=linkequation,
    }%
    \SavedEqref{#1}%
  \endgroup
}
\title{Convergence of substructuring Methods for the Cahn-Hilliard Equation}
\author{Gobinda Garai\thanks{School of Basic Sciences, IIT Bhubaneswar, India, gg14@iitbbs.ac.in} \and Bankim C. Mandal\thanks{School of Basic Sciences, IIT Bhubaneswar, India, bmandal@iitbbs.ac.in}}
\date{}
\begin{document}

\maketitle
\setcounter{page}{1}
\setstretch{.63}
\begin{abstract}
In this paper, we formulate and study substructuring type algorithm for the Cahn-Hilliard (CH) equation, which was originally proposed to describe the phase separation phenomenon for binary melted alloy below the critical temperature and since then it has appeared in many fields ranging from tumour growth simulation, image processing, thin liquid films, population dynamics etc. Being a non-linear equation, it is important to develop robust numerical techniques to solve the CH equation. Here we present the formulation of Dirichlet-Neumann (DN) and Neumann-Neumann (NN) methods applied to CH equation and study their convergence behaviour. We consider the domain-decomposition based DN and NN methods in one and two space dimension for two subdomains and extend the study for multi-subdomain setting for NN method. We verify our findings with numerical results.\\[.12cm]
 \textbf{AMS subject classifications}: 65M55, 65Y05, 65M15
\end{abstract}
{\bf Keywords:} Dirichlet-Neumann, Neumann-Neumann, Domain Decomposition, Parallel computing, Iterative method, Convergence analysis, Cahn-Hilliard equation.

\section{Introduction} \label{intro}

The Cahn-Hilliard equation has been suggested as a prototype to represent the evolution of a binary melted alloy below the critical temperature in \cite{Cahn, Hilliard}. Initially both components of binary alloy uniformly present in the system, then they go through rapid cooling below the critical temperature. As a result the homogeneous state becomes unstable and phase separation occurs. The phase separation is a process when a homogeneous mixture of two components $A$ and $B$ in one thermodynamic phase suddenly separates into regions consisting of two different phases. We can describe the phase separation by defining two components A and B with concentrations and making the following assumptions:
\begin{itemize}
\item the domain is filled with binary melted alloy: $A$ and $B$ particles with concentrations $m_1$ and $m_2$ respectively.
\item diffusion is the only form of transportation.
\item AA and BB interactions are favorable.
\item AB and BA interactions are unfavorable.
\end{itemize}
Then the phase function or the concentration of species can be defined in either of two ways
$$u =\frac{m_1 - m_2}{m_1 + m_2},\mbox{\hspace{1cm}}
 c=\frac{m_1}{m_1 + m_2}.$$
It follows that $-1 \leq u \leq 1$ and $0 \leq c \leq 1.$ Here we denote concentration as $u$. With these assumptions we define the Ginzburg-Landau free energy functional: 
$$ E(u):=\int_{\Omega}\left(f(u)+\frac{\epsilon^2}{2}\vert\nabla u\vert^2\right)d\boldsymbol{x}, $$
where $f(u)$ has the primitive $F(u)$, which is generally taken as $F(u) = \frac{1}{4}(u^2 - 1)^2$ and $\frac{\epsilon^2}{2}\vert\nabla u\vert^2$ is gradient energy and $\epsilon$ is thickness of the interface. The first variation $\frac{\delta E(u)}{\delta u}$ would quantify how the energy changes when the particle concentrations change. This variation is known as the chemical potential of the system and given by
\begin{equation}\label{chemical potential}
\frac{\delta E(u)}{\delta u} = f(u) - \epsilon^2\Delta u = \mu. 
\end{equation}
Using Fick’s 1st diffusion law, $J = -M(u)\nabla\mu,$
where $M(u)$ being the mobility. We have a mass conservation constraint, so by continuity equation we have the following:
\begin{equation}\label{conservation}
\frac{\partial u}{\partial t} + \nabla . J = 0.
\end{equation}
For constant mobility $M(u)$ and using \eqref{chemical potential} in \eqref{conservation}, CH equation takes the form
\begin{equation}\label{CH}
\frac{\partial u}{\partial t} = \Delta(f(u) - \epsilon^2\Delta u), 
\end{equation}
with the natural boundary conditions for all $t\in(0,T]$
\begin{equation}\label{BC}
\frac{\partial u}{\partial n} = 0\,\ \mbox{ on} \ \ \partial\Omega,\mbox{\hspace{1cm}}
 \frac{\partial \mu}{\partial n} = 0\,\ \mbox{ on}  \ \ \partial\Omega. \end{equation}
The initial-boundary value problem for a closed system is then to solve \eqref{CH} subject to the boundary condition \eqref{BC} and the initial condition $u(x, 0) = u_0(x),\; x\in \Omega.$ We introduce the chemical potential $v:$
\begin{equation*}
v := f(u) - \epsilon^2\Delta u,
\end{equation*}
to reformulate the CH equation as the following mixed form,
\begin{equation}\label{mixedCH}
\begin{aligned}
\frac{\partial u}{\partial t} & = \Delta v, \,\ \mbox{ for} \,\ (x,t)\in\Omega\times(0, T],\\
v & = f(u) - \epsilon^2\Delta u, \,\ \mbox{ for} \,\ (x,t)\in\Omega\times(0, T],\\
u(x, 0) & = u_0(x), \,\ x\in\Omega,
\end{aligned}
\end{equation}
with the Neumann boundary condition for all $t\in(0, T]$
\[
\frac{\partial u}{\partial n} = \frac{\partial v}{\partial n} = 0 \mbox{ on} \,\ \partial\Omega. 
\]
By differentiating the energy functional $E(u)$ and total mass $\int_{\Omega}u$ with respect to time $t$, we get 
\begin{equation}\label{energy minimization}
\frac{d}{dt}E(u)\leq 0, \mbox{\hspace{1cm}} \frac{d}{dt}\int_{\Omega}u = 0.
\end{equation}
Meaning, CH equation describes energy minimization and the total mass conservation while the system evolve.

The CH equation is a nonlinear equation and it is impossible to find its analytical solution. However the existence of solution is guaranteed in \cite{ElliottZheng, liuzhao}. Extensive studies have been carried out to find numerical schemes for the CH equation to approximate the solution with either Dirichlet \cite{DuNicolaides, David} or Neumann boundary conditions\cite{elliott1987numerical, furihata2001stable, shin2011conservative, ElliottZheng, Elliott, StuartHumphries, Christlieb, ChengFeng}. A review on numerical treatment to the CH equation can be found in \cite{lee2014physical}. The possible applications of CH equation as a model are: image inpainting \cite{EsedoAndrea}, tumour growth simulation \cite{Wise}, population dynamics \cite{cohen1981generalized}, dendritic growth \cite{kim1999universal}, planet formation \cite{tremaine2003origin} etc.

Since the non-increasing property \eqref{energy minimization} of the total energy is an essential feature of the CH equation, it is a key issue for long time simulation that is expected to be preserved by numerical techniques. The solution of CH equation involves two different dynamics, one is phase separation  which is quick in time, and another is phase coarsening which is slow in time. The fine-scale phase regions are formed during the early stage and they are separated by the interface which is of width $\epsilon.$ Whereas during phase coarsening, the solution tends to an equilibrium state which minimizes
the system energy.  In order to deal with the property \eqref{energy minimization}, Eyre \cite{David, Eyre} proposed an unconditionally gradient stable scheme. The idea is to split the homogeneous free energy $F(u)$ into a sum of a convex term and a concave term, and then treating the convex term implicitly and the concave term explicitly to obtain, for example, a first order in time and 2nd order in space approximation for the one dimensional CH equation, as described below:
\begin{equation}\label{1stchdis}
\begin{aligned}
u_j^{n+1} & = \delta_t\Delta _hv_j^{n+1} + u_j^{n},\\
v_j^{n+1} & = (u_j^{n+1})^3 -u_j^{n} - \epsilon^2\Delta _hu_j^{n+1}, 
\end{aligned}
\end{equation}
where $\delta_t$ is time step and $\Delta_h$ is discrete Laplacian.
The equation \eqref{1stchdis} represents a large set of nonlinear coupled equation due to the cubic term. To linearise the problem, the term $(u_j^{n+1})^3$ is disintegrated as $(u_j^{n})^2u_j^{n+1}$; we rewrite the resulting equation as:
\begin{equation}\label{linearCH}
\begin{aligned}
u_j^{n+1} - \delta_t\Delta _hv_j^{n+1} & = u_j^{n},\\
v_j^{n+1} + \epsilon^2\Delta _hu_j^{n+1} - (u_j^{n})^2u_j^{n+1} & = -u_j^{n}, 
\end{aligned}
\end{equation}
which is also an unconditionally gradient stable scheme and has the same accuracy as the nonlinear scheme \eqref{1stchdis}\cite{David, vollmayr2003fast}. So, at each time level one has to solve the following system of elliptic equations to get the solution of \eqref{mixedCH}
\begin{equation}\label{EE}
\begin{aligned}
\bar{u} - \delta_t\Delta \bar{v} & = f_{\bar{u}},\\
\bar{v} + \epsilon^2\Delta\bar{u} - c^2\bar{u} & = f_{\bar{v}}, 
\end{aligned}
\end{equation}
where $c = u_j^{n}, \bar{u} = u_j^{n+1}, \bar{v} = v_j^{n+1}, f_{\bar{u}} = c \,\ \text{and}\,\ f_{\bar{v}} = -c.$ The above system \eqref{EE} can be reformulated as the following,
\begin{equation}\label{MF1}
\begin{aligned}
\begin{bmatrix} 
I & -\delta_t\Delta \\
\epsilon^2\Delta-c^2 & I 
\end{bmatrix}
\quad
\begin{bmatrix} 
{\bar{u}} \\
{\bar{v}} 
\end{bmatrix}
\quad 
 = \quad \begin{bmatrix} 
f_{\bar{u}} \\
f_{\bar{v}} 
\end{bmatrix},
\quad \mbox{in}\,\ \Omega.
\end{aligned}
\end{equation}
In 2D, one also gets the above system \eqref{MF1} at each time level for suitably chosen $f_{\bar{u}}, f_{\bar{v}}, c$. It is worth mentioning that many other basic algorithms approximating the solution of CH equation can be reformulated as \eqref{MF1}, for example the semi-implicit Euler's scheme\cite{KimLee}, the LSS scheme \cite{EsedoAndrea}. In addition, the linearisation of a non-linearly stabilized splitting scheme \cite{Christlieb} would also lead to the form \eqref{MF1}.

Since the spatial mesh size $h$ is $O(\epsilon)$ or even finer, the linear equation will result in a very large scale algebraic system that should be solved sequentially for simulating the long term behaviour of CH equation.
Consequently, it is of great importance to accelerate the simulation using parallel computation, which can be achieved by domain decomposition techniques \cite{Picard, LionsI, LionsII}. In this work, we lay our efforts on the Dirichlet-Neumann and Neumann-Neumann methods. These algorithms were first considered by Bjørstad \& Widlund \cite{BjWid} and Bourgat et al. \cite{bourgat1988variational}; also see \cite{QuarVal, TosWid} and to see some recent work on Neumann-Neumann method we refer to \cite{ scalability, nilpotent, local, continuous}.
The main objective of our work is to solve the problem \eqref{MF1} with the imposed transmission condition and analyse the convergence  behaviour for two as well as multisubdomain setting in 1D and 2D.

We introduce the DN algorithm in one and two spatial dimension for two subdomains, and study the convergence result in Section \ref{Section2} . In section \ref{Section3} we present the NN algorithm for  multiple subdomain, and analyze the convergence behaviour. To illustrate our analysis, the accuracy and robustness of the proposed techniques, we show numerical results in Section \ref{Section4}.

\section{The Dirichlet-Neumann Method}\label{Section2}
In this section, we introduce the Dirichlet-Neumann method for the second order elliptic system \eqref{MF1}. For convenience we use the notation $u, v$ instead of $\bar{u}, \bar{v}$ and rewrite the system \eqref{MF1} as
\begin{equation}\label{modelproblem}
\begin{aligned}
\begin{bmatrix} 
I & -\delta_t\Delta \\
\epsilon^2\Delta-c^2 & I 
\end{bmatrix}
\begin{bmatrix} 
{u} \\
{v} 
\end{bmatrix}
 = \begin{bmatrix} 
f_{{u}} \\
f_{{v}} 
\end{bmatrix},
\quad \mbox{in}\,\ \Omega,
\end{aligned}
\end{equation}
together with the Neumann boundary condition $\mathcal{B}\begin{bmatrix} 
u \\
v 
\end{bmatrix}
=0$ along $\partial\Omega$.

Suppose the spatial domain $\Omega$ is partitioned into two non-overlapping subdomains $\Omega_{1}, \Omega_{2}$. We denote 
$u_{i}$ the restriction of the solution $u$ of \eqref{modelproblem} to $\Omega_{i}$ for $i=1, 2$ and set $\Gamma:=\partial\Omega_{1}\cap\partial\Omega_{2}$. The DN algorithm starts with initial guesses $g^{[0]}, h^{[0]}$ along the interface $\Gamma$ and solve for $k = 1, 2, \cdots$
{\footnotesize
\begin{equation}
\begin{aligned}\label{DNCH}
&\left\{
\begin{aligned}
\begin{bmatrix} 
I & -\delta_t\Delta \\
\epsilon^2\Delta-c^2 & I 
\end{bmatrix}
\begin{bmatrix} 
u_1^{[k]} \\
v_1^{[k]} 
\end{bmatrix}
  = \begin{bmatrix} 
f_u \\
f_v 
\end{bmatrix},\,\ \text{in} \,\ \Omega_1, \\
\mathcal{B}\begin{bmatrix} 
u_1^{[k]} \\
v_1^{[k]} 
\end{bmatrix}
 = 0,\,\ \mbox{on}\,\ \partial\Omega_1\cap\partial\Omega,\\
\begin{bmatrix} 
u_1^{[k]} \\
v_1^{[k]} 
\end{bmatrix}
 = 
\begin{bmatrix} 
g^{[k-1]} \\
h^{[k-1]} 
\end{bmatrix},
\,\ \mbox{on}\,\ \Gamma,\\
\end{aligned}\right.
&
\left\{\begin{aligned}
\begin{bmatrix} 
I & -\delta_t\Delta \\
\epsilon^2\Delta-c^2 & I 
\end{bmatrix}
\begin{bmatrix} 
u_2^{[k]} \\
v_2^{[k]} 
\end{bmatrix}
  = \begin{bmatrix} 
f_u \\
f_v 
\end{bmatrix}, \,\ \text{in} \,\ \Omega_2, \\
\mathcal{B}\begin{bmatrix} 
u_2^{[k]} \\
v_2^{[k]} 
\end{bmatrix}
 = 0,\,\ \mbox{on}\,\ \partial\Omega_2\cap\partial\Omega,\\
\frac{\partial}{\partial n_2}\begin{bmatrix} 
u_2^{[k]} \\
v_2^{[k]} 
\end{bmatrix}
 = 
-\frac{\partial}{\partial n_1}\begin{bmatrix} 
u_1^{[k]} \\
v_1^{[k]} 
\end{bmatrix},
\,\ \mbox{on}\,\ \Gamma.\\
\end{aligned}\right. \\
\end{aligned}
\end{equation}
\par}
Then we update the interface trace by
\[
\begin{bmatrix} 
g^{[k]} \\
h^{[k]} 
\end{bmatrix}
 = \theta \begin{bmatrix} 
u_2^{[k]} \\
v_2^{[k]} 
\end{bmatrix}_{\big|_{\Gamma}} + 
(1-\theta)\begin{bmatrix} 
g^{[k-1]} \\
h^{[k-1]} 
\end{bmatrix},
\]
where $\theta\in(0, 1)$ is a relaxation parameter. We now consider the error equation, corresponding to the DN algorithm \eqref{DNCH} for further analysis, so that $f_u=0=f_v$.
The ultimate goal of our analysis is to study how the error $g^{[k]}, h^{[k]}$ converges to zero as $k\rightarrow\infty$.
\subsection{Convergence analysis in 1D}
To determine the convergence behaviour of the algorithm \eqref{DNCH} in one spatial dimension, let $\Omega = (-a, b)$ is decomposed into $\Omega_1 = (-a, 0)$ and $\Omega_2 = (0, b)$ with interface $\Gamma=\{0\}$. For $k = 1, 2, \cdots$, we solve
\begin{flalign*}
  & \begin{aligned} & \begin{cases}
  AE_1^{[k]} & = 0, \quad \quad \text{in} \,\ \Omega_1 \\
E_1^{[k]} & = 
\begin{bmatrix} 
g^{[k-1]} \\
h^{[k-1]} 
\end{bmatrix}, \quad \text{on} \,\ \Gamma \\
\frac{\partial}{\partial x}E_1^{[k]} & = 0, \quad \quad \text{on} \,\ \partial\Omega_1 \backslash \Gamma\\
  \end{cases}\\
  \MoveEqLeft[3]\text{}
  \end{aligned}
  & &
  \begin{aligned}
      & \begin{cases}
    AE_2^{[k]} & = 0,  \quad \quad \text{in} \,\ \Omega_2\vspace{.2cm}\\
\frac{\partial}{\partial x}E_2^{[k]} & = \frac{\partial}{\partial x}E_1^{[k]},\quad \text{on} \,\ \Gamma\vspace{.2cm} \\
\frac{\partial}{\partial x}E_2^{[k]} & = 0, \quad \quad \text{on} \,\ \partial\Omega_2\backslash \Gamma \\
    \end{cases} \\
    \MoveEqLeft[-3]\text{}
  \end{aligned}
\end{flalign*}
 and then update the interface trace by
\begin{equation}\label{interfacetrace1d}
\begin{bmatrix} 
g^{[k]} \\
h^{[k]} 
\end{bmatrix}
 = \theta {E_2^{[k]}}_{\big|_{\Gamma}} + 
(1-\theta)\begin{bmatrix} 
g^{[k-1]} \\
h^{[k-1]} 
\end{bmatrix},
\end{equation}
where
\begin{equation*}
A = 
\begin{bmatrix} 
1 & -\delta_t\frac{d^2}{dx^2} \\
\epsilon^2\frac{d^2}{dx^2}-c^2 & 1 
\end{bmatrix},\quad
E_j^{[k]} = \begin{bmatrix} 
u_j^{[k]}(x) \\
v_j^{[k]}(x)
\end{bmatrix}.
\end{equation*}
We solve the subdomain problems by solving the following algebraic equations
\begin{equation}\label{Algebraiceq}
AE_j^{[k]} =0, j=1,2
\end{equation}
where $E_j^{[k]}$ is the subdomain solution in $\Omega_j$ for $j=1,2$ at $k-$th iteration. We assume the solution of the equation \eqref{Algebraiceq} for every iteration $k$ is of the following form,
\begin{equation}
E_j=\Psi_j e^{\xi x}; \xi\; \text{being a parameter to be determined.}
\end{equation}
Inserting the above form of $E_j$ into the equation \eqref{Algebraiceq} gives the following,
\begin{equation}\label{coefficientmatrix}
\begin{bmatrix} 
1 & -\delta_t\xi^2\\
\epsilon^2\xi^2-c^2 & 1 
\end{bmatrix}
\Psi_j = 0, \;\text{since the exponential term never vanish.}
\end{equation}
The equation \eqref{coefficientmatrix} has non-trivial solutions only if the coefficient matrix is singular, i.e the determinant of coefficient matrix is zero, 
\begin{equation*}
\det
\begin{bmatrix} 
1 & -\delta_t\xi^2\\
\epsilon^2\xi^2-c^2 & 1 
\end{bmatrix}
 = 0.
\end{equation*}
Solving this equation yields
\begin{equation}\label{xi13}
\xi_{1,2} = \pm\sqrt{\lambda_1},\quad \xi_{3,4} = \pm\sqrt{\lambda_2},
\end{equation}
where $\lambda_{1,2}$ are given by 
\begin{equation*}
\lambda_{1,2} = \frac{c^2\delta_t\pm \sqrt{c^4\delta_t^2-4\epsilon^2\delta_t}}{2\epsilon^2\delta_t}
\end{equation*}
respectively. Thus the solution of \eqref{Algebraiceq} has the following form 
\begin{equation*}\label{generalsol}
E_j^{[k]} = \sum_{l=1} ^4 \zeta_{j,l}^{[k]}\mu_l e^{\xi_l x}, 
\end{equation*}
where for each $l$, $\zeta_{j,l}$ are constant for $j=1,2$ and $\mu_l$ is an eigenvector of the coefficient matrix in  \eqref{coefficientmatrix} associated to the eigenvalue zero for $\xi = \xi_l$, and is explicitly given by 
\begin{equation*}
\mu_{1} = \mu_{2} = \begin{bmatrix} 
\delta_t\lambda_1 \\
1 
\end{bmatrix},\quad \mu_{3} = \mu_{4} = \begin{bmatrix} 
\delta_t\lambda_2\\
1 
\end{bmatrix}.
\end{equation*}
Using the transmission conditions on the interface $\Gamma$ and the physical boundary conditions on $\partial\Omega_j\backslash\Gamma$, we find the constants $\zeta_{j,l}$ for each subdomain. We have the subdomain solution at $k-$th iteration for Dirichlet and Neumann step respectively,
\[
\begin{array}{cc}
\begin{aligned}
E_1^{[k]} & = 
\begin{bmatrix} 
\mu_1 & \mu_3
\end{bmatrix} 
\begin{bmatrix} 
\gamma_{1,x} + \frac{\sigma_{1,a} \sigma_{1,x}}{\gamma_{1,a}} & 0 \\
0 &  \gamma_{3,x} + \frac{\sigma_{3,a} \sigma_{3,x}}{\gamma_{3,a}}
\end{bmatrix}
\begin{bmatrix} 
\eta_1 \\
\eta_2 
\end{bmatrix},\\
E_2^{[k]} & = 
\begin{bmatrix} 
\mu_1 & \mu_3
\end{bmatrix} 
\begin{bmatrix} 
\frac{\sigma_{1,x}\sigma_{1,a}}{\gamma_{1,a}}  - \frac{\gamma_{1,x}\gamma_{1,b} \sigma_{1,a}}{\gamma_{1,a}\sigma_{1,b}} & 0 \\
0 & \frac{\sigma_{3,x}\sigma_{3,a}}{\gamma_{3,a}}  - \frac{\gamma_{3,x}\gamma_{3,b} \sigma_{3,a}}{\gamma_{3,a}\sigma_{3,b}}
\end{bmatrix}
\begin{bmatrix} 
\eta_1 \\
\eta_2 
\end{bmatrix},
\end{aligned}
\end{array}
\]
where $\sigma_{i,x}:=\sinh(\xi_i x), \gamma_{i,x}:=\cosh(\xi_i x)$,  for $i=1,3$ and 
$\eta_1 := \frac{g^{[k-1]}-\delta_t\lambda_2 h^{[k-1]}}{\delta_t\lambda}, \eta_2 := \frac{g^{[k-1]}-\delta_t\lambda_1 h^{[k-1]}}{-\delta_t\lambda}$ with
$\lambda = \lambda_1 - \lambda_2$.  
Now inserting the subdomain solutions into the updating condition \eqref{interfacetrace1d} we get 
\begin{equation}\label{DNupdate1d}
\begin{bmatrix} 
g^{[k]} \\
h^{[k]} 
\end{bmatrix} 
=  \mathbb{H}
\begin{bmatrix} 
g^{[k-1]} \\
h^{[k-1]} 
\end{bmatrix},
\end{equation}
where $\mathbb{H}$ is the iteration matrix given by   
\begin{equation*}
\mathbb{H} = 
\begin{bmatrix} 
1-\theta+\theta\frac{-\lambda_1\rho_1 + \lambda_2\rho_2}{\lambda} & \theta\frac{\delta_t\lambda_1\lambda_2(\rho_1 - \rho_2)}{\lambda} \\
\\
\theta\frac{\rho_2 - \rho_1}{\delta_t\lambda} & 1-\theta+\theta\frac{\lambda_2\rho_1 - \lambda_1\rho_2}{\lambda}
\end{bmatrix} 
\end{equation*}
with \begin{equation*}
\rho_1 = \frac{\sigma_{1,a}\gamma_{1,b}}{\gamma_{1,a}\sigma_{1,b}},\; \rho_2 = \frac{\sigma_{3,a}\gamma_{3,b}}{\gamma_{3,a}\sigma_{3,b}}.
\end{equation*}

\begin{theorem}[Convergence of DN for $a=b$]
When the subdomains are of the same size, $a=b$, the DN algorithm converges linearly for $0<\theta<1, \theta\neq 1/2.$ For $\theta = 1/2$, it converges in two iterations.
\end{theorem}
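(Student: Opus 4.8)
Here is a plan for proving the theorem.

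The plan is to evaluate the iteration matrix $\mathbb{H}$ from \eqref{DNupdate1d} in the symmetric configuration $a=b$ and show that it reduces to a scalar multiple of the identity. The key starting observation is that $\rho_1=\dfrac{\sigma_{1,a}\gamma_{1,b}}{\gamma_{1,a}\sigma_{1,b}}$ and $\rho_2=\dfrac{\sigma_{3,a}\gamma_{3,b}}{\gamma_{3,a}\sigma_{3,b}}$ both collapse to $1$ the moment $b=a$, since numerator and denominator then become identical; note this happens regardless of whether the roots $\lambda_{1,2}$ (equivalently $\xi_{1,3}$) are real or complex. Substituting $\rho_1=\rho_2=1$ into $\mathbb{H}$, the two off-diagonal entries carry the factor $\rho_1-\rho_2$ and hence vanish, while each diagonal entry simplifies, using $\lambda=\lambda_1-\lambda_2$, to $1-\theta+\theta\dfrac{-\lambda_1+\lambda_2}{\lambda}=1-2\theta$. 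Thus $\mathbb{H}=(1-2\theta)\,I$, and iterating \eqref{DNupdate1d} gives $\big[g^{[k]},h^{[k]}\big]^{\top}=(1-2\theta)^{k}\big[g^{[0]},h^{[0]}\big]^{\top}$.

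Both assertions then follow quickly. For $0<\theta<1$ with $\theta\neq1/2$ the scalar $|1-2\theta|$ lies strictly in $(0,1)$, so the interface traces — and, by continuous dependence of the Dirichlet and Neumann subdomain solves on their boundary data, also the subdomain errors $E_1^{[k]},E_2^{[k]}$ — decay geometrically with contraction factor $|1-2\theta|$, which is exactly linear convergence. For $\theta=1/2$ the matrix $\mathbb{H}$ is the zero matrix, so $g^{[1]}=h^{[1]}=0$ after the first sweep; feeding these vanishing interface data into the homogeneous Dirichlet problem on $\Omega_1$ and then the homogeneous Neumann problem on $\Omega_2$ forces $E_1^{[2]}=E_2^{[2]}=0$, i.e. the algorithm has converged after two iterations.

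The one technical point deserving attention — and the step I would treat as the main, albeit minor, obstacle — is to confirm that the denominators occurring in the closed-form expression for $\mathbb{H}$, namely $\gamma_{1,a},\gamma_{3,a},\sigma_{1,b},\sigma_{3,b}$ together with $\lambda=\lambda_1-\lambda_2$, are all nonzero, so that the reduction to $(1-2\theta)I$ is legitimate. When $\lambda_{1,2}\in\mathbb{R}$ this is automatic; when they are complex the $\xi_{1,3}$ are purely imaginary, and one restricts to subdomain widths $a$ avoiding the discrete exceptional set on which $\sinh(\xi_i a)=0$. With that caveat in place, the rest of the proof is the one-line computation sketched above.
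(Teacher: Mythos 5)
Your proposal is correct and follows essentially the same route as the paper: for $a=b$ one has $\rho_1=\rho_2=1$, so $\mathbb{H}$ collapses to $(1-2\theta)I$, giving linear convergence for $\theta\neq 1/2$ and convergence in two iterations for $\theta=1/2$. One small correction to your technical caveat: when $\lambda_{1,2}$ are complex conjugates the roots $\xi_{1,3}$ are not purely imaginary but have strictly positive real part $\xi_{\Re}$ (as computed in the paper's Theorem~\ref{NLD1d}), so $\sinh(\xi_i a)\neq 0$ automatically and no exceptional set of subdomain widths needs to be excluded.
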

\begin{proof}
If $a=b$, then we get from \eqref{DNupdate1d} 
\begin{equation*}
\begin{bmatrix} 
g^{[k]} \\
h^{[k]} 
\end{bmatrix} 
=  \begin{bmatrix} 
1-2\theta & 0 \\
0 & 1-2\theta 
\end{bmatrix}
\begin{bmatrix} 
g^{[k-1]} \\
h^{[k-1]} 
\end{bmatrix}.
\end{equation*}
Thus the convergence is linear for $0<\theta<1, \theta\neq 1/2$. For $\theta=1/2$, the method converges to the exact solution in two iterations.
\end{proof}
 
From the numerical results of Table \ref{dnequaltable_1d}, it is clear that $\theta =1/2$ is the optimal relaxation parameter for equal subdomain setting even irrespective of time step size. This motivates to study the convergence results for unequal sudomain with the parameter value of $\theta$ being $1/2$. We consider two cases: $a<b$, which means that the Neumann subdomain is bigger than the Dirichlet subdomain, and $a>b$, when the Dirchlet subdomain is bigger than Neumann subdomain. Before going to the main result, we prove the following Lemma, which is needed to study the convergence results.
\begin{lemma}\label{DNlemma}
The function $f(t)=\frac{\sinh (at)}{\sinh (bt)}$ with $0 < a < b$ has the following properties
\begin{enumerate}[label=(\roman*)]
\item $\forall t > 0, 0 < f(t) < \frac{a}{b}$.\label{item:1}
\item $\forall t > 0, f$ is a monotonically decreasing function.\label{item:2}
\end{enumerate}
\end{lemma}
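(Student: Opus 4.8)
My plan is to handle the two items separately, reducing both of them to a single elementary fact: for each fixed $t>0$, the map $s\mapsto \dfrac{\sinh(st)}{s}$ is strictly increasing on $(0,\infty)$ (equivalently, $x\mapsto \sinh(x)/x$ is strictly increasing on $(0,\infty)$). I would justify this in one line either from the Taylor expansion $\sinh(x)/x=\sum_{n\ge 0}x^{2n}/(2n+1)!$, whose nonconstant terms are strictly increasing for $x>0$, or by observing that $x\cosh x-\sinh x$ vanishes at $0$ and has derivative $x\sinh x>0$ on $(0,\infty)$, hence is positive, which is exactly the numerator of $\frac{d}{dx}(\sinh x/x)$.

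For item~\ref{item:1}, positivity $f(t)>0$ is immediate because $\sinh(at)>0$ and $\sinh(bt)>0$ whenever $a,b,t>0$. For the upper bound I would rewrite $f(t)<a/b$ in the equivalent form $\dfrac{\sinh(at)}{at}<\dfrac{\sinh(bt)}{bt}$ and apply the monotonicity above to the two arguments $at<bt$. As a by-product one sees $f(t)\to a/b$ as $t\to 0^{+}$, so the bound $a/b$ is sharp, although this is not needed.

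For item~\ref{item:2}, I would differentiate directly. Writing $f'(t)=\dfrac{N(t)}{\sinh^{2}(bt)}$ with $N(t):=a\cosh(at)\sinh(bt)-b\sinh(at)\cosh(bt)$, it suffices to prove $N(t)<0$ for $t>0$. Here $N(0)=0$, and a short simplification gives $N'(t)=(a^{2}-b^{2})\sinh(at)\sinh(bt)$, which is strictly negative for $t>0$ since $0<a<b$; integrating from $0$ then yields $N(t)<0$, i.e.\ $f'(t)<0$. Alternatively, the product-to-sum identities give $2N(t)=(a+b)\sinh((b-a)t)-(b-a)\sinh((a+b)t)$, and because $0<b-a<a+b$ the same monotonicity lemma used in item~\ref{item:1} shows $N(t)<0$ at once; so item~\ref{item:2} really reduces to item~\ref{item:1}.

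I do not expect a genuine obstacle here: the only step requiring a little care is establishing the monotonicity of $s\mapsto\sinh(st)/s$ cleanly, and (if one takes the direct-differentiation route for item~\ref{item:2}) correctly collapsing $N'(t)$ to $(a^{2}-b^{2})\sinh(at)\sinh(bt)$. Once those are in place, both assertions follow mechanically.
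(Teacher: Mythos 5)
Your proposal is correct, and every computation in it checks out: the monotonicity of $x\mapsto\sinh(x)/x$, the identity $N'(t)=(a^{2}-b^{2})\sinh(at)\sinh(bt)$, and the product-to-sum form $2N(t)=(a+b)\sinh((b-a)t)-(b-a)\sinh((a+b)t)$ are all right. However, your route is genuinely different from the paper's. The paper proves part (ii) first by taking the logarithmic derivative, $(\log f(t))'=a\coth(at)-b\coth(bt)$, and argues this is negative; it then deduces part (i) from part (ii) by letting $t\to0^{+}$, where $f(t)\to a/b$. You reverse the logical order: you establish (i) directly from the strict monotonicity of $\sinh(x)/x$ and then obtain (ii) either by an explicit differentiation of the numerator $N(t)$ or by reducing (ii) back to (i) through the addition formulas. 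What your approach buys is self-containedness and precision: the paper justifies the sign of $a\coth(at)-b\coth(bt)$ by saying that ``$\coth$ is positive and decreasing,'' but decreasingness of $\coth$ alone gives $\coth(at)>\coth(bt)$, which points the wrong way; the fact actually needed is that $x\mapsto x\coth(x)$ is increasing (equivalent, after clearing denominators, to your monotonicity of $\sinh(x)/x$), and your write-up supplies exactly that missing justification. What the paper's approach buys is brevity: one line of logarithmic differentiation and a limit, at the cost of leaning on a hyperbolic-function fact stated somewhat loosely. Either proof is acceptable; yours is the more airtight of the two.
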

\begin{proof}
Clearly, $f(t)>0, \forall t > 0$. We prove part \ref{item:2} first, that naturally leads to part \ref{item:1} as we take the limit $t\rightarrow 0^+$ . Taking the derivative of logarithm of $f(t)$ with respect to $t$ we get
$$(\log(f(t)))'= \frac{f'(t)}{f(t)}=a\coth(at)-b\coth(bt).$$
Now $f'(t)<0$ iff $a\coth(at)<b\coth(bt)$, which holds true since $\coth(t)$ is positive and decreasing for $t > 0$ and $0<a<b$. This completes the result.
\end{proof}

\begin{theorem}[Convergence of DN for $b>a$]\label{NLD1d}
If  $\theta = 1/2$ and the Dirichlet subdomain is smaller than the Neumann subdomain, then the error of the DN algorithm for two subdomains satisfies the linear convergence estimate,
\[
\parallel g^{[k]}\parallel_{L^{\infty}(\Gamma_i)} \leq
\begin{cases}
\left(\frac{b - a}{2b}\right)^k \max\big\{\parallel g^{[0]}\parallel_{L^{\infty}(\Gamma_i)}, \parallel h^{[0]}\parallel_{L^{\infty}(\Gamma_i)}\big\}, & \text{if}\; \delta_t > \frac{4\epsilon^2}{c^4},\\
\left(\frac{b - a}{\sqrt{2}b}\right)^k \max\big\{\parallel g^{[0]}\parallel_{L^{\infty}(\Gamma_i)}, \parallel h^{[0]}\parallel_{L^{\infty}(\Gamma_i)}\big\}, & \text{if}\; \delta_t < \frac{4\epsilon^2}{c^4}.
\end{cases}
\]
\end{theorem}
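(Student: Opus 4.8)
The plan is to diagonalise the $2\times 2$ iteration matrix $\mathbb H$ of \eqref{DNupdate1d}, read off its eigenvalues in closed form, and bound their moduli; the two cases for $\delta_t$ correspond exactly to the characteristic exponents $\xi_1,\xi_3$ being real or genuinely complex. Putting $\theta=1/2$ and using that $\lambda_1,\lambda_2$ solve $\epsilon^2\delta_t\lambda^2-c^2\delta_t\lambda+1=0$ (so $\lambda_1+\lambda_2=c^2/\epsilon^2$, $\lambda_1\lambda_2=1/(\epsilon^2\delta_t)$), one checks that $\mathbb H=P\,\mathrm{diag}(r_1,r_2)\,P^{-1}$ with $P=[\,\mu_1\ \mu_3\,]$ and eigenvalues $r_j=\tfrac12(1-\rho_j)$ (the trace already confirms $r_1+r_2=1-\tfrac12(\rho_1+\rho_2)$). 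Hence the iteration decouples in the $\mu_1,\mu_3$ basis and contracts at the rate $\rho(\mathbb H)=\max\{|r_1|,|r_2|\}$, so everything reduces to bounding $\rho(\mathbb H)$ — on the single interface point $\Gamma$ the passage between the eigen\-coordinates and $g,h$ costs only a norm\-equivalence constant. The identities $1-\tanh(a\xi)\coth(b\xi)=\sinh((b-a)\xi)/(\cosh(a\xi)\sinh(b\xi))$ and $2\cosh(a\xi)\sinh(b\xi)=\sinh((a+b)\xi)+\sinh((b-a)\xi)$ recast the eigenvalues, with $\xi=\xi_1$ for $j=1$ and $\xi=\xi_3$ for $j=2$, as
\[
r_j=\frac{\sinh((b-a)\xi)}{\sinh((b-a)\xi)+\sinh((a+b)\xi)}=\Bigl(1+\tfrac{\sinh((a+b)\xi)}{\sinh((b-a)\xi)}\Bigr)^{-1}.
\]

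When $\delta_t>4\epsilon^2/c^4$ the discriminant $c^4\delta_t^2-4\epsilon^2\delta_t$ is positive, so $\lambda_1,\lambda_2>0$ and $\xi_1,\xi_3$ are real and positive. Applying Lemma~\ref{DNlemma} to $t\mapsto\sinh((b-a)t)/\sinh((a+b)t)$ (legitimate since $0<b-a<a+b$) gives $\sinh((a+b)\xi)/\sinh((b-a)\xi)>(a+b)/(b-a)$, whence $0<r_j<\bigl(1+\tfrac{a+b}{b-a}\bigr)^{-1}=\tfrac{b-a}{2b}$, which is the first rate.

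When $\delta_t<4\epsilon^2/c^4$ the discriminant is negative, so $\lambda_1=\overline{\lambda_2}$ with $\mathrm{Re}\,\lambda_1=c^2/(2\epsilon^2)>0$ and $|\lambda_1|=1/(\epsilon\sqrt{\delta_t})$; thus $\arg\lambda_1\in(0,\pi/2)$, $\xi_1=\sqrt{\lambda_1}$ lies in the sector $S=\{\,0\le\arg\zeta\le\pi/4\,\}$ and $\xi_3=\overline{\xi_1}$, so $r_2=\overline{r_1}$ and it suffices to bound $|r_1|=|F(\xi_1)|$, where $F(\zeta)=\sinh((b-a)\zeta)/(\sinh((b-a)\zeta)+\sinh((a+b)\zeta))$. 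The singularities of $F$ lie on $i\mathbb R$, and $F$ is holomorphic and bounded on $S$ (it extends across $\zeta=0$ with value $\tfrac{b-a}{2b}$ and decays exponentially as $|\zeta|\to\infty$ inside $S$), so the maximum modulus principle bounds $|F(\xi_1)|$ by the larger of $\sup_{\arg\zeta=0}|F|$ and $\sup_{\arg\zeta=\pi/4}|F|$. On $\arg\zeta=0$ we already have $|F|<\tfrac{b-a}{2b}$; on $\arg\zeta=\pi/4$, writing $\zeta=te^{i\pi/4}$ and using $|\sinh(x\zeta)|^2=\tfrac12(\cosh(\sqrt2\,xt)-\cos(\sqrt2\,xt))$ and $|\cosh(x\zeta)|^2=\tfrac12(\cosh(\sqrt2\,xt)+\cos(\sqrt2\,xt))$ for real $x$, one gets, with $s=\sqrt2\,t$,
\[
|F(\zeta)|^2=\frac{\cosh((b-a)s)-\cos((b-a)s)}{2\bigl(\cosh(bs)-\cos(bs)\bigr)\bigl(\cosh(as)+\cos(as)\bigr)},
\]
which one checks is $\le\tfrac{(b-a)^2}{2b^2}$ via the elementary inequalities $\cosh x-\cos x\ge x^2$ and $\cosh x+\cos x\ge2$ for small and moderate $s$, together with an exponential comparison for large $s$; hence $|r_1|\le\tfrac{b-a}{\sqrt2\,b}$, the second rate. (Equivalently one may prove directly that $\mathrm{Re}\,\rho_1\in(a/b,1)$ and $|\mathrm{Im}\,\rho_1|<1-a/b$, so that $|1-\rho_1|^2<2(1-a/b)^2$.)

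I expect the case $\delta_t<4\epsilon^2/c^4$ to be the main obstacle: once the exponents are complex the monotonicity of Lemma~\ref{DNlemma} is no longer directly usable and the naive hyperbolic bounds degenerate near $i\mathbb R$, so the whole argument must be organised around the fact that $\xi_1$ is trapped in a $45^\circ$ sector — via the maximum\-modulus reduction above (and the slightly delicate scalar estimate it leaves) or via sharp control of $\mathrm{Re}\,\rho_1$ and $\mathrm{Im}\,\rho_1$. A secondary, and here harmless, technicality is that $\mathbb H$ is non\-normal, so the cleanest route from $\rho(\mathbb H)^k$ to the stated $L^\infty(\Gamma)$ bound runs through the decoupled eigen\-coordinates rather than through a matrix norm of $\mathbb H$ itself.
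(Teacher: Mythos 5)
Your proposal is correct, and in the real-exponent case $\delta_t>4\epsilon^2/c^4$ it coincides with the paper's argument: both diagonalise $\mathbb H$, identify the eigenvalues as $\tfrac12(1-\rho_j)=\sinh((b-a)\xi)/(2\cosh(a\xi)\sinh(b\xi))$, and invoke Lemma~\ref{DNlemma} (the paper discards the factor $\cosh(a\xi)>1$ and applies the lemma to $\sinh((b-a)t)/\sinh(bt)$; you apply it to $\sinh((b-a)t)/\sinh((a+b)t)$ — the same bound $\tfrac{b-a}{2b}$ either way). In the complex case your route is genuinely different: the paper estimates componentwise via $|\sinh(z)|^2=\sinh^2(\Re z)+\sin^2(\Im z)$, the inequality $\xi_{\Im}<\xi_{\Re}$, and the assertion $|\cosh(a\xi_i)|>1$, whereas you trap $\xi_1$ in the sector $0\le\arg\zeta\le\pi/4$ and push the bound to the boundary rays by the maximum modulus principle, leaving a single scalar inequality on the $45^\circ$ ray. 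That inequality is true, but your sketch should be closed differently than stated: with $g(x)=\cosh x-\cos x$ and $h(x)=\cosh x+\cos x$, the bound $g((b-a)s)\le\tfrac{(b-a)^2}{b^2}g(bs)h(as)$ follows for \emph{all} $s>0$ from the monotonicity of $g(x)/x^2$ (its Taylor coefficients are positive) together with $h\ge2$, whereas $g(x)\ge x^2$ and $h\ge 2$ alone only work for moderate $s$, as you concede. Your approach buys robustness: on the ray one genuinely has $|\cosh(a\zeta)|^2=\tfrac12(\cosh(as)+\cos(as))\ge1$, while the paper's blanket claim $|\cosh(a\xi)|>1$ for $a\xi_{\Re}>0$ is not valid for arbitrary complex arguments, so your argument repairs a weak step rather than reproducing it. One loose end, shared with the paper but which you at least flag: since $\mathbb H$ is non-normal, passing from $\rho(\mathbb H)^k$ to the stated constant-free $L^{\infty}$ estimate via eigencoordinates formally introduces the condition number of $[\mu_1\ \mu_3]$, which neither proof accounts for.
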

\begin{proof}
When $\delta_t > \frac{4\epsilon^2}{c^4}$, it is clear that $\lambda_{1,2}$ are real and positive, so are $\xi_{1,3}$. The iteration matrix $\mathbb{H}$ has two different eigenvalues $\frac{1}{2}(1 - \rho_1), \frac{1}{2}(1 - \rho_2)$. Hence, the convergence is achieved iff the spectral radius $\rho(\mathbb{H}) = \max\{\vert \frac{1}{2}(1 - \rho_1)\vert, \vert \frac{1}{2}(1 - \rho_2)\vert\}$ of the iteration matrix $\mathbb{H}$ is less than one, i.e $(\rho(\mathbb{H}))^k$ approaches to zero as $k\rightarrow\infty$. Upon simplification of the eigenvalue term we have $\frac{\sinh((b-a)\xi_i)}{2\sinh(b\xi_i)\cosh(a\xi_i)}$, for $i = 1, 3$. Since $\cosh(x)>1$ for $x>0$, we have using Lemma \ref{DNlemma},
\[  
\left|\frac{\sinh((b-a)\xi_i)}{2\sinh(b\xi_i)\cosh(a\xi_i)}\right| \leq \left| \frac{\sinh((b-a)\xi_i)}{2\sinh(b\xi_i)}\right| \leq \left(\frac{b - a}{2b}\right).
\]
For $\delta_t < \frac{4\epsilon^2}{c^4}$,  $\lambda_1$ and  $\lambda_2$ becomes complex conjugates, that we denote as $\lambda_{1,2}=\lambda_{\Re} \pm i\lambda_{\Im}, i = \sqrt{-1}$, where
\[
\lambda_{\Re}=\frac{ c^2 }{2\epsilon^2},\; \lambda_{\Im}=\frac{\sqrt{4\delta_t\epsilon^2 - \delta_t^2 c^4} }{2\delta_t\epsilon^2}.
\]
 Then $\xi_{1,3}$ as in \eqref{xi13} takes the form $\xi_1=\xi_{\Re} + i\xi_{\Im}, \xi_3=\xi_{\Re} - i\xi_{\Im}$, where 
 \[
\xi_{\Re}=\sqrt{\frac{\lambda_{\Re}+\sqrt{\lambda_{\Re}^2+\lambda_{\Im}^2}}{2}}, \; \xi_{\Im}=\sqrt{\frac{-\lambda_{\Re}+\sqrt{\lambda_{\Re}^2+\lambda_{\Im}^2}}{2}}.
\]
Clearly $\xi_{\Re}, \xi_{\Im}$ are positive numbers. Now if we take the modulus of the eigenvalues of the iteration matrix $\mathbb{H}$ and use the fact $\vert \cosh(a\xi_i)\vert>1$ for $a\xi_{\Re}>0$, we get
\begin{equation*}
\begin{aligned}
\left|\frac{\sinh((b-a)\xi_i)}{2\sinh(b\xi_i)\cosh(a\xi_i)}\right| & \leq &
\frac{\sqrt{\sinh^2((b-a)\xi_{\Re}) + \sin^2((b-a)\xi_{\Im})}}{2\sqrt{\sinh^2(b\xi_{\Re}) + \sin^2(b\xi_{\Im})}} \\
 & \leq & \frac{\sqrt{2}\sinh((b-a)\xi_{\Re})}{2\sinh(b\xi_{\Re})} \leq \left(\frac{b - a}{\sqrt{2}b}\right),
\end{aligned}
\end{equation*} 
the second inequality follows from $\sin^2((b-a)\xi_{\Im}) < \sinh^2((b-a)\xi_{\Im}) < \sinh^2((b-a)\xi_{\Re})$, as  $\xi_{\Im} < \xi_{\Re}$, and the last inequality follows from the Lemma \ref{DNlemma}. Hence the estimate.
\end{proof}
\begin{theorem}[Convergence of DN for $a>b$]\label{DLN}
If  $\theta = 1/2$ and the Dirichlet subdomain is larger than the Neumann subdomain, then the error of the DN algorithm for two subdomains satisfies the linear convergence estimate,
\[
\parallel g^{[k]}\parallel_{L^{\infty}(\Gamma_i)} \leq
\begin{cases}
\left(\frac{a-b}{2b}\right)^k \max\big\{\parallel g^{[0]}\parallel_{L^{\infty}(\Gamma_i)}, \parallel h^{[0]}\parallel_{L^{\infty}(\Gamma_i)}\big\}, & \text{if}\; \delta_t > \frac{4\epsilon^2}{c^4},\\
\left(\frac{a-b}{\sqrt{2}b}\right)^k \max\big\{\parallel g^{[0]}\parallel_{L^{\infty}(\Gamma_i)}, \parallel h^{[0]}\parallel_{L^{\infty}(\Gamma_i)}\big\}, & \text{if}\; \delta_t < \frac{4\epsilon^2}{c^4}.
\end{cases}
\]
\end{theorem}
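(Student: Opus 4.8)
The plan is to mirror the proof of Theorem~\ref{NLD1d}, since the iteration matrix $\mathbb{H}$ and its eigenvalues $\tfrac12(1-\rho_1),\tfrac12(1-\rho_2)$ are exactly the same; only the roles of the two subdomain lengths are interchanged because now $a>b$. First I would recall that convergence is governed by the spectral radius $\rho(\mathbb{H})=\max\{|\tfrac12(1-\rho_1)|,|\tfrac12(1-\rho_2)|\}$, and that each eigenvalue simplifies to $\dfrac{\sinh((b-a)\xi_i)}{2\sinh(b\xi_i)\cosh(a\xi_i)}$ for $i=1,3$. Using $\sinh((b-a)\xi_i)=-\sinh((a-b)\xi_i)$, the modulus of each eigenvalue is $\dfrac{|\sinh((a-b)\xi_i)|}{2|\sinh(b\xi_i)\cosh(a\xi_i)|}$, so the quantity to bound is $\dfrac{\sinh((a-b)t)}{\sinh(bt)}$ up to the $\cosh(a\xi_i)\ge 1$ factor in the denominator.

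The key observation is that when $a>b$ we do \emph{not} have $a-b<b$ in general, so Lemma~\ref{DNlemma} cannot be applied directly to $\sinh((a-b)t)/\sinh(bt)$. Instead I would exploit the extra denominator factor $\cosh(a\xi_i)$, which is discarded for free in Theorem~\ref{NLD1d} but is essential here. Using the product-to-sum identity $2\sinh(bt)\cosh(at)=\sinh((a+b)t)+\sinh((a-b)t)\ge \sinh((a-b)t)$ for $t>0$, and dropping the positive $\sinh((a+b)t)$ term is too wasteful; better is to write the eigenvalue modulus as $\dfrac{\sinh((a-b)t)}{2\sinh(bt)\cosh(at)}=\dfrac{\sinh((a-b)t)}{\sinh((a+b)t)+\sinh((a-b)t)}$, which is of the form $\dfrac{X}{Y+X}$ with $0<X<Y$ since $a-b<a+b$. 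Then by Lemma~\ref{DNlemma}\ref{item:1} applied with the pair $(a-b,a+b)$ (legitimate because $0<a-b<a+b$), we get $\dfrac{\sinh((a-b)t)}{\sinh((a+b)t)}<\dfrac{a-b}{a+b}$, hence $\dfrac{X}{Y+X}<\dfrac{(a-b)/(a+b)}{1+(a-b)/(a+b)}=\dfrac{a-b}{2a}$. Unfortunately this gives the bound $\tfrac{a-b}{2a}$ rather than the claimed $\tfrac{a-b}{2b}$; so the cleaner route, matching the stated constant, is to just bound $\cosh(at)\ge 1$ exactly as before, reducing the problem to $\dfrac{\sinh((a-b)t)}{2\sinh(bt)}$, and then handle the two regimes for $\delta_t$ separately.

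For the regime $\delta_t>\tfrac{4\epsilon^2}{c^4}$: here $\lambda_{1,2}$ and $\xi_{1,3}$ are real and positive, and I would bound $\dfrac{\sinh((a-b)t)}{2\sinh(bt)}$. If $a-b<b$ (i.e.\ $a<2b$) Lemma~\ref{DNlemma} gives this is $<\tfrac{a-b}{2b}$ directly; if $a-b\ge b$ one needs the monotonicity/convexity of $\sinh$ together with the $\cosh(at)$ factor reinstated — concretely, $\dfrac{\sinh((a-b)t)}{\sinh(bt)\cosh(at)}\le \dfrac{\sinh((a-b)t)}{\sinh(bt)\cosh((a-b)t)}=\dfrac{\tanh((a-b)t)}{\sinh(bt)}\le \dfrac{(a-b)t}{\sinh(bt)}$, which is decreasing and tends to $\tfrac{a-b}{b}$ as $t\to0^+$, giving $\le\tfrac{a-b}{2b}$. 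For the regime $\delta_t<\tfrac{4\epsilon^2}{c^4}$: $\lambda_{1,2}$ are complex conjugates, $\xi_1=\xi_\Re+i\xi_\Im$, $\xi_3=\xi_\Re-i\xi_\Im$ with $0<\xi_\Im<\xi_\Re$, and exactly as in Theorem~\ref{NLD1d} I would write $|\sinh((a-b)\xi_i)|=\sqrt{\sinh^2((a-b)\xi_\Re)+\sin^2((a-b)\xi_\Im)}$, use $\sin^2((a-b)\xi_\Im)<\sinh^2((a-b)\xi_\Im)<\sinh^2((a-b)\xi_\Re)$ to pull out a factor $\sqrt2$, and reduce to the real case, producing the constant $\tfrac{a-b}{\sqrt2 b}$. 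Finally, translating the spectral-radius bound into the stated $L^\infty(\Gamma_i)$ estimate on $g^{[k]}$ (and $h^{[k]}$) follows verbatim from the diagonalization of $\mathbb{H}$ and the elementary inequality relating $\|(g^{[k]},h^{[k]})\|$ to $\max\{\|g^{[0]}\|,\|h^{[0]}\|\}$ used in the previous theorem. The main obstacle is precisely the case $a-b\ge b$, where Lemma~\ref{DNlemma} is not directly applicable and one must use the $\cosh(a\xi_i)$ factor in the denominator together with $\tanh\le\mathrm{id}$; if the authors instead tacitly assume $a<2b$ (the regime in which DN is of practical interest, since a hugely oversized Dirichlet domain defeats the purpose), the proof is an immediate transcription of Theorem~\ref{NLD1d} with $b-a$ replaced by $a-b$.
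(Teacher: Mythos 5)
Your overall strategy matches the paper's: the published proof of this theorem consists of the single remark that one repeats the argument of Theorem \ref{NLD1d} after adjusting the sign inside the modulus, so the eigenvalues of $\mathbb{H}$ are again $\tfrac12(1-\rho_i)=\frac{\sinh((b-a)\xi_i)}{2\sinh(b\xi_i)\cosh(a\xi_i)}$ and one bounds their moduli. You go further than the paper in one important respect: you correctly observe that for $a\ge 2b$ the pair $(a-b,b)$ violates the hypothesis $0<a-b<b$ of Lemma \ref{DNlemma}, so that discarding the factor $\cosh(a\xi_i)$ (as the paper does) no longer works --- indeed $\sinh((a-b)t)/\sinh(bt)$ is then increasing and unbounded in $t$. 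Your patch, $\cosh(at)\ge\cosh((a-b)t)$ followed by $\tanh((a-b)t)\le(a-b)t$ and the monotone decrease of $t/\sinh(bt)$, is valid and closes this case in the real regime; the paper silently ignores it. Two caveats. First, your product-to-sum identity carries a sign error: $2\sinh(bt)\cosh(at)=\sinh((a+b)t)-\sinh((a-b)t)$, not $+$. With the correct sign the eigenvalue modulus equals $X/(Y-X)$ with $X=\sinh((a-b)t)$, $Y=\sinh((a+b)t)$, and Lemma \ref{DNlemma}\ref{item:1} applied to the legitimate pair $(a-b,a+b)$ gives $X/Y<(a-b)/(a+b)$, hence $X/(Y-X)<\frac{(a-b)/(a+b)}{1-(a-b)/(a+b)}=\frac{a-b}{2b}$ --- exactly the claimed constant, uniformly for all $a>b$. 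So the route you dismissed as producing only $(a-b)/(2a)$ in fact produces the stated bound in one line and is arguably the cleanest proof available. Second, in the complex regime ($\delta_t<4\epsilon^2/c^4$) your reduction to $\sqrt2\,\sinh((a-b)\xi_{\Re})/(2\sinh(b\xi_{\Re}))$ discards $|\cosh(a\xi_i)|$ before the real-case bound is invoked, so for $a\ge 2b$ the $\tanh$ patch is no longer available there and that sub-case is not fully closed in your write-up; it is equally unaddressed in the paper, whose subsequent Remark restricts attention to $a<(1+\sqrt2)b$ in that regime anyway.
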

\begin{proof}
The proof is similar to the case $a<b$ as in theorem \eqref{NLD1d}. To get the estimate, one has to adjust the negative sign inside modulus.
\end{proof}
\begin{remark}
The linear estimate in Theorem \eqref{DLN} does not always guarantee convergence. For example when $a > 3b$ (or $a > (1+\sqrt{2})b$ in 2nd case) i.e. when Dirichlet subdomain is much larger than Neumann subdomain, one should switch the role of the subdomains and solve the Neumann problem in larger subdomain.
\end{remark}
\subsection{Convergence analysis in 2D}
We now study the convergence of the DN method for a decomposition into two subdomains in two spatial dimension. Let the domain $\Omega = (-a, b)\times(0,L)$ be decomposed into two subdomains, given by $\Omega_1 = (-a, 0)\times(0,L)$ and $\Omega_2 = (0, b)\times(0,L)$.
We analyse this 2D case by converting it into a collection of 1D problems using the Fourier sine transform in the $y$-direction.
Writing the solution $u_i^{[k]}, v_i^{[k]}$ for $i=1, 2$ in a Fourier sine series along the $y$-direction yields
\[
u_{i}^{[k]}(x,y)=\sum_{m\geq1}\hat u_{i}^{[k]}(x,m)\sin\left(\frac{m\pi y}{L}\right),\;
v_{i}^{[k]}(x,y)=\sum_{m\geq1}\hat v_{i}^{[k]}(x,m)\sin\left(\frac{m\pi y}{L}\right)
\]
After a Fourier sine transform, the DN algorithm \eqref{DNCH} for the error equations in two dimensional setting for CH equation becomes
\begin{flalign*}
  & \begin{aligned} & \begin{cases}
\hat A\hat E_1^{[k]} & = 0, \quad \quad \text{in} \,\ (-a, 0) \\
\frac{\partial}{\partial x}\hat E_1^{[k]} & = 0, \quad \quad \text{at} \,\ x = -a\\
\hat E_1^{[k]} & = \begin{bmatrix} 
\hat g^{[k-1]} \\
\hat h^{[k-1]}
\end{bmatrix}, \quad \text{at} \,\ x = 0 \\
  \end{cases}\\
  \MoveEqLeft[2]\text{}
  \end{aligned}
  & &
  \begin{aligned}
      & \begin{cases}
\hat A\hat E_2^{[k]} & = 0,  \quad \quad \text{in} \,\ (0, b)\vspace{.2cm}\\
\frac{\partial}{\partial x}\hat E_2^{[k]} & = \frac{\partial}{\partial x}\hat E_1^{[k]},\quad \text{at} \,\ x = 0\vspace{.2cm} \\
\frac{\partial}{\partial x}\hat E_2^{[k]} & = 0, \quad \quad \text{at} \,\ x = b\\ 
    \end{cases} \\
    \MoveEqLeft[-5]\text{}
  \end{aligned}
\end{flalign*}
and the update condition becomes
\[ 
 \begin{bmatrix} 
\hat g^{[k]} \\
\hat h^{[k]}
\end{bmatrix} 
= 
\theta
\hat E_2^{[k]}(0,m)
+ 
(1 - \theta)
\begin{bmatrix} 
\hat g^{[k-1]} \\
\hat h^{[k-1]} 
\end{bmatrix}
\]
where
\begin{equation*}
\hat A = 
\begin{bmatrix} 
1 & -\delta_t(\frac{d^2}{dx^2} - p_m^2) \\
\epsilon^2(\frac{d^2}{dx^2} - p_m^2) - c^2 & 1 
\end{bmatrix},\quad
\hat E_j^{[k]} = \begin{bmatrix} 
\hat u_j^{[k]}(x) \\
\hat v_j^{[k]}(x)
\end{bmatrix}.
\end{equation*}
$\hat E_j^{[k]}$ for $j=1,2$ denotes the error in Fourier space and $p_m^2 = \frac{\pi ^2 m^2}{L^2}$.
Now we can do the same treatment similar to one dimensional analysis and get the recurrence relation as:
\begin{equation}\label{reccurence2d}
\begin{bmatrix} 
\hat g^{[k]} \\
\hat h^{[k]} 
\end{bmatrix} 
= \hat {\mathbb{H}}
\begin{bmatrix} 
\hat g^{[k-1]} \\
\hat h^{[k-1]} 
\end{bmatrix},
\end{equation}
where $ \hat {\mathbb{H}} $ is the iteration matrix given by   
\begin{equation*}
\hat {\mathbb{H}} = 
\begin{bmatrix} 
1-\theta+\theta\frac{-\lambda_1\rho_1 + \lambda_2\rho_2}{\lambda} & \theta\frac{\delta_t\lambda_1\lambda_2(\rho_1 - \rho_2)}{\lambda} \\
\\
\theta\frac{\rho_2 - \rho_1}{\delta_t\lambda} & 1-\theta+\theta\frac{\lambda_2\rho_1 - \lambda_1\rho_2}{\lambda}
\end{bmatrix} 
\end{equation*}
with the expressions of $\rho_1$ and $\rho_2$ exactly as defined earlier, but having the modified Fourier symbols
\begin{equation*}
\xi_{1,2} = \pm\sqrt{\lambda_1 + p_m^2},\quad \xi_{3,4} = \pm\sqrt{\lambda_2 + p_m^2},
\end{equation*}
with the same $\lambda_{1,2}$. 
\begin{theorem}[Convergence of DN in 2D]\label{NLD2d}
\begin{enumerate}[label=(\roman*)]
\item 
When the subdomains are of the same size, $a=b$, the DN algorithm converges linearly for $0<\theta<1, \theta\neq 1/2.$ For $\theta = 1/2$, it converges in two iterations.\label{item:1:2d}
\item
If $\theta = 1/2$ and the Dirichlet subdomain is smaller than the Neumann subdomain, i.e., $b>a$, then the error of the DN algorithm for two subdomains satisfies the linear convergence estimate,
\[
\parallel g^{[k]} \parallel_{L^{2}(\Gamma)} \leq
\begin{cases}
\left(\frac{b - a}{2b}\right)^k \max\left\{\parallel g^{[0]} \parallel_{L^{2}(\Gamma)}, \parallel h^{[0]} \parallel_{L^{2}(\Gamma)}\right\}, & \text{if}\; \delta_t > \frac{4\epsilon^2}{c^4},\\
\left(\frac{b - a}{\sqrt{2}b}\right)^k \max\left\{\parallel g^{[0]} \parallel_{L^{2}(\Gamma)}, \parallel h^{[0]} \parallel_{L^{2}(\Gamma)}\right\}, & \text{if}\; \delta_t < \frac{4\epsilon^2}{c^4}.
\end{cases}
\]\label{item:2:2d}
\item 
And for $\theta = 1/2$ and when the Dirichlet subdomain is larger than the Neumann subdomain, i.e., $a>b$, then the error of the DN algorithm for two subdomains satisfies the linear convergence estimate,
\[
\parallel g^{[k]}\parallel_{L^{2}(\Gamma)} \leq
\begin{cases}
\left(\frac{a - b}{2b}\right)^k \max\left\{\parallel g^{[0]} \parallel_{L^{2}(\Gamma)}, \parallel h^{[0]} \parallel_{L^{2}(\Gamma)}\right\}, & \text{if}\; \delta_t > \frac{4\epsilon^2}{c^4},\\
\left(\frac{a - b}{\sqrt{2}b}\right)^k \max\left\{\parallel g^{[0]} \parallel_{L^{2}(\Gamma)}, \parallel h^{[0]} \parallel_{L^{2}(\Gamma)}\right\}, & \text{if}\; \delta_t < \frac{4\epsilon^2}{c^4}.
\end{cases}
\]\label{item:3:2d}
\end{enumerate}
\end{theorem}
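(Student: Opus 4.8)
\medskip
\noindent\textbf{Proof plan.} The plan is to reduce the two-dimensional statement to the one-dimensional results already proved. After the Fourier sine transform in $y$, the 2D error equations split into the countable family of 1D two-point boundary value problems written just before \eqref{reccurence2d}, one for each frequency $m\ge 1$; each of these has exactly the algebraic structure of the 1D problem, the only change being the Helmholtz-type shift of the characteristic exponents, $\xi_{1,2}=\pm\sqrt{\lambda_1+p_m^2}$ and $\xi_{3,4}=\pm\sqrt{\lambda_2+p_m^2}$ with $p_m^2=\pi^2m^2/L^2$, while $\lambda_{1,2}$ --- and therefore $\lambda=\lambda_1-\lambda_2$ and the eigenvectors $\mu_1,\mu_3$ --- do not depend on $m$. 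Hence the per-mode recursion \eqref{reccurence2d} has an iteration matrix $\hat{\mathbb H}$ of the same form as $\mathbb H$ in \eqref{DNupdate1d}, and the whole task is to bound $\hat{\mathbb H}$ \emph{uniformly in $m$} and then return to $\Gamma$ via Parseval's identity in $y$.

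For part \ref{item:1:2d} I would note that when $a=b$ one has $\rho_1=\dfrac{\sigma_{1,a}\gamma_{1,b}}{\gamma_{1,a}\sigma_{1,b}}=1$ and likewise $\rho_2=1$ for every $m$, irrespective of the value of $\xi_i$. Putting $\rho_1=\rho_2=1$ in $\hat{\mathbb H}$ annihilates both off-diagonal entries and leaves $1-2\theta$ on the diagonal, so $\hat{\mathbb H}=(1-2\theta)\,I$ for each frequency, exactly as in the one-dimensional equal-subdomain case. Thus $\hat g^{[k]}(m)=(1-2\theta)^k\hat g^{[0]}(m)$ and similarly for $\hat h$; squaring and summing over $m$ yields $\|g^{[k]}\|_{L^2(\Gamma)}=|1-2\theta|^k\|g^{[0]}\|_{L^2(\Gamma)}$, that is, linear convergence for $0<\theta<1,\ \theta\neq1/2$. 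For $\theta=1/2$ the interface traces are wiped out after the first sweep, so the second sweep solves the (now homogeneous) subdomain problems exactly and the method converges in two iterations.

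For parts \ref{item:2:2d} and \ref{item:3:2d} I would fix $\theta=1/2$. As in the proof of Theorem~\ref{NLD1d}, the matrix $\hat{\mathbb H}$ is diagonalised by the fixed, $m$-independent change of variables with columns $\mu_1,\mu_3$ and has eigenvalues $\tfrac12(1-\rho_1)$ and $\tfrac12(1-\rho_2)$, which simplify to $\dfrac{\sinh\big((b-a)\xi_i\big)}{2\sinh(b\xi_i)\cosh(a\xi_i)}$ for $i=1,3$. I would then rerun the two-regime discussion of Theorems~\ref{NLD1d} and \ref{DLN} with $\xi_i=\sqrt{\lambda_i+p_m^2}$ in place of $\sqrt{\lambda_i}$: if $\delta_t>4\epsilon^2/c^4$ then $\lambda_{1,2}>0$, so $\lambda_i+p_m^2>0$ and $\xi_i>0$ for every $m$, whence $\cosh(a\xi_i)>1$ and Lemma~\ref{DNlemma} bound each eigenvalue in modulus by $\tfrac{b-a}{2b}$ if $a<b$ (by $\tfrac{a-b}{2b}$ if $a>b$), uniformly in $m$; if $\delta_t<4\epsilon^2/c^4$ then $\lambda_{1,2}=\lambda_{\Re}\pm i\lambda_{\Im}$, hence $\lambda_{1,2}+p_m^2=(\lambda_{\Re}+p_m^2)\pm i\lambda_{\Im}$ are still complex conjugates but with strictly larger real part, so the associated $\xi_{\Re},\xi_{\Im}$ stay positive and still satisfy $\xi_{\Im}<\xi_{\Re}$ (the shift only enlarges $\xi_{\Re}$), and the inequalities $\sin^2((b-a)\xi_{\Im})<\sinh^2((b-a)\xi_{\Im})<\sinh^2((b-a)\xi_{\Re})$ and $|\cosh(a\xi_i)|^2=\sinh^2(a\xi_{\Re})+\cos^2(a\xi_{\Im})>1$ from the 1D proof go through verbatim, giving the bound $\tfrac{b-a}{\sqrt2\,b}$ (resp.\ $\tfrac{a-b}{\sqrt2\,b}$), again uniformly in $m$. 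Because the diagonalising matrix $[\mu_1\ \mu_3]$ is independent of $m$, these mode-wise estimates all carry the same contraction factor $r$, and Parseval's identity in $y$ then turns them into the asserted $L^2(\Gamma)$ estimates.

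The hard part is exactly this uniformity in the frequency $m$: one has to check that the shift $\lambda_i\mapsto\lambda_i+p_m^2$ preserves every positivity and ordering property that Lemma~\ref{DNlemma} and the one-dimensional arguments exploit --- positivity of $\xi_i$ in the real-root regime, positivity together with $0<\xi_{\Im}<\xi_{\Re}$ in the complex-root regime, and $|\cosh(a\xi_i)|\ge1$ --- and that the resulting contraction factor, together with the $m$-independence of the eigenbasis, is bounded by the same constant for all $m\ge1$ so that it survives the summation over $m$. Granting that, the rest is a direct transcription of the proofs of Theorems~\ref{NLD1d} and \ref{DLN}.
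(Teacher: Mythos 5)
Your proposal follows essentially the same route as the paper's proof: Fourier sine transform in $y$ to obtain the per-mode recursion \eqref{reccurence2d}, a bound on the eigenvalues $\tfrac12(1-\rho_i)$ of $\hat{\mathbb H}$ that is uniform in $m$ via Lemma \ref{DNlemma} (treating the real-root and complex-root regimes with the shifted exponents $\xi_i=\sqrt{\lambda_i+p_m^2}$ exactly as in Theorems \ref{NLD1d} and \ref{DLN}), and the Parseval--Plancherel identity to convert the mode-wise contraction into the $L^2(\Gamma)$ estimates. The uniformity-in-$m$ checks you flag as the "hard part" (positivity of $\xi_i$, $0<\xi_{\Im}(m)<\xi_{\Re}(m)$, monotone growth of $\xi_{\Re}(m)$) are precisely what the paper verifies, so no gap relative to the published argument.
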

\begin{proof}
\ref{item:1:2d} If $a=b$, then \eqref{reccurence2d} gives 
\begin{equation*}
\begin{bmatrix} 
\hat g^{[k]} \\
\hat h^{[k]}
\end{bmatrix} 
=  \begin{bmatrix} 
1-2\theta & 0 \\
0 & 1-2\theta 
\end{bmatrix}
\begin{bmatrix} 
\hat g^{[k-1]} \\
\hat h^{[k-1]} 
\end{bmatrix}.
\end{equation*}
Now back-transformation will lead to the conclusion \ref{item:1:2d}. \\[.1cm]
\ref{item:2:2d} For $\delta_t > \frac{4\epsilon^2}{c^4}$, it is clear that $\lambda_{1,2}$ are real and positive, so are $\xi_{1,3}$. The spectral radius of the iteration matrix $\hat {\mathbb{H}}$ is given by $\rho(\hat {\mathbb{H}}) = \max\{\vert \frac{1}{2}(1 - \rho_1)\vert, \vert \frac{1}{2}(1 - \rho_2)\vert\}$. By the recurrence relation \eqref{reccurence2d} and using the Parseval-Plancherel identity we get
\[
\begin{array}{cc}
\parallel g^{[k]}\parallel_{L^{2}(\Gamma)} \leq \rho(\hat {\mathbb{H}}) \max\left\{\parallel g^{[k-1]} \parallel_{L^{2}(\Gamma)}, \parallel h^{[k-1]} \parallel_{L^{2}(\Gamma)}\right\}, \\ 
\parallel h^{[k]}\parallel_{L^{2}(\Gamma)} \leq \rho(\hat {\mathbb{H}}) \max\left\{\parallel g^{[k-1]} \parallel_{L^{2}(\Gamma)}, \parallel h^{[k-1]} \parallel_{L^{2}(\Gamma)}\right\}.
\end{array}
\]
Hence we get the result by estimating $\rho(\hat {\mathbb{H}})$ as in the case of Theorem \ref{NLD1d}. \\
For $\delta_t < \frac{4\epsilon^2}{c^4}$,  $\lambda_1$ and  $\lambda_2$ becomes complex conjugates and have the form given in Theorem \ref{NLD1d}, whereas $\xi_{1,3}$ take the form $\xi_1 =  \sqrt{\lambda_1 + p_m^2} = \xi_{\Re}(m) + i\xi_{\Im}(m), \xi_3 = \sqrt{\lambda_2 + p_m^2} = \xi_{\Re}(m) - i\xi_{\Im}(m)$, where 
 \[
\xi_{\Re}(m)=\sqrt{\frac{p_m^2 + \lambda_{\Re}+\sqrt{(p_m^2 + \lambda_{\Re})^2+\lambda_{\Im}^2}}{2}}, \; \xi_{\Im}(m)=\frac{\lambda_{\Im}}{2\xi_{\Re}(m)}.
\]
Clearly $\xi_{\Re}(m), \xi_{\Im}(m)$ are positive numbers for all $m \geq 1$. Finally using Parseval-Plancherel identity  and as similar to the proof of Theorem \ref{NLD1d}, we get the estimate.\\[.1cm]
\ref{item:3:2d}
For the last case, $a>b$, as well, a similar argument as above and in Theorem \ref{DLN} leads to the required estimate.
\end{proof}

\section{The Neumann-Neumann method for multiple subdomains}\label{Section3}

We now introduce the 2nd method of our interest, namely the NN algorithm for the CH equation for multiple subdomains. For a detail study on two subdomain decomposition, see \cite{garai2021convergence}. Suppose $\Omega$ is decomposed into non-overlapping subdomains $\{\Omega_i, 1\leq i \leq N\}$, as illustrated in Fig. \ref{multipledomain}.
The NN algorithm starts with initial guesses $g_i^{[0]}, h_i^{[0]}$ along the interfaces $\Gamma_i$ for $i = 1,\ldots,N-1$, and then performs the following two steps: at each iteration $k$, one first solves Dirichlet sub-problems on each $\Omega_i$ in parallel,
\begin{equation}\label{MNCH}
\begin{aligned}
\begin{bmatrix} 
I & -\delta_t\Delta \\
\epsilon^2\Delta-c^2 & I 
\end{bmatrix}
\quad
\begin{bmatrix} 
u_i^{[k]} \\
v_i^{[k]} 
\end{bmatrix}
& =  \begin{bmatrix} 
f_u \\
f_v 
\end{bmatrix},
\quad \mbox{in}\,\ \Omega_i,\\
\mathcal{B}\begin{bmatrix} 
u_i^{[k]} \\
v_i^{[k]} 
\end{bmatrix}
& =  0,\quad \mbox{on}\,\ \partial\Omega_i\cap\partial\Omega,\\
\begin{bmatrix} 
u_i^{[k]} \\
v_i^{[k]} 
\end{bmatrix}
 & = 
\begin{bmatrix} 
g_i^{[k-1]} \\
h_i^{[k-1]} 
\end{bmatrix}
\quad \mbox{on}\,\ \Gamma_i,
 \end{aligned}
\end{equation}
then the jump in Neumann traces on the interfaces are calculated and one solves the following Neumann sub-problems on each $\Omega_i$ in parallel,
\begin{equation}\label{MNNS}
\begin{aligned}
\begin{bmatrix} 
I & -\delta_t\Delta \\
\epsilon^2\Delta-c^2 & I 
\end{bmatrix}
\begin{bmatrix} 
\phi_i^{[k]} \\
\psi_i^{[k]} 
\end{bmatrix} 
& =  0,
\quad \mbox{in}\,\ \Omega_i,\\
\mathcal{B}\begin{bmatrix} 
\phi_i^{[k]} \\
\psi_i^{[k]} 
\end{bmatrix}
 & =  0,\quad \mbox{on}\,\ \partial\Omega_i\cap\partial\Omega,\\
\frac{\partial}{\partial n}\begin{bmatrix} 
\phi_i^{[k]} \\
\psi_i^{[k]} 
\end{bmatrix} & = 
\frac{\partial}{\partial n}\begin{bmatrix} 
u_i^{[k]} - u_{i+1}^{[k]} \\
v_i^{[k]} - v_{i+1}^{[k]} 
\end{bmatrix},
\quad \mbox{on}\,\ \Gamma_i.
 \end{aligned}
\end{equation}
\begin{figure}
    \centering
    \subfloat{{\includegraphics[width=5cm]{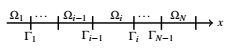} }}
    \qquad
    \subfloat{{\includegraphics[width=5cm]{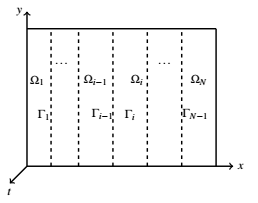} }}
    \caption{Multiple decomposition of domain in 1D (left) and 2D (right) at a particular time level.}
    \label{multipledomain}
\end{figure}
Lastly the interface traces are updated by
\[
\begin{bmatrix} 
g_i^{[k]} \\
h_i^{[k]} 
\end{bmatrix}
 = \begin{bmatrix} 
g_i^{[k-1]} \\
h_i^{[k-1]} 
\end{bmatrix} -
  \theta \begin{bmatrix} 
\phi_i^{[k]} - \phi_{i+1}^{[k]} \\
\psi_i^{[k]} - \psi_{i+1}^{[k]} 
\end{bmatrix}_{\big|_{\Gamma_i}}, 
\]
where $\theta\in(0, 1)$ is a relaxation parameter.
\subsection{Convergence Analysis in 1D}
We present our convergence estimates for the NN algorithm in 1D case. The domain $\Omega:=(0,L)$
is decomposed into $N$ subdomains $\Omega_{i}:=(x_{i-1},x_{i})$, $i=1,\ldots,N$, with subdomain length $d_{i}:=x_{i}-x_{i-1}$. We solve the error equations given by
\begin{flalign*}\label{NNerr1}
  & \begin{aligned} & \begin{cases}
A E_i^{[k]} & = 0, \quad \quad \text{in} \,\ \Omega_i, \\
 E_i^{[k]} & = \begin{bmatrix} 
 g_{i-1}^{[k-1]} \\
 h_{i-1}^{[k-1]} 
\end{bmatrix}, \quad \text{on} \,\ \Gamma_{i-1}, \\
 E_i^{[k]} & = \begin{bmatrix} 
 g_i^{[k-1]} \\
 h_i^{[k-1]} 
\end{bmatrix}, \quad \text{on} \,\ \Gamma_{i}, 
\end{cases}\\
  \MoveEqLeft[2]\text{}
  \end{aligned}
  & &
  \begin{aligned}
      & \begin{cases}
A F_i^{[k]} & = 0, \quad \quad \text{in} \,\ \Omega_i, \\
\frac{\partial}{\partial x}\ F_i^{[k]} & = \frac{\partial}{\partial x}\begin{bmatrix} 
u_{i-1}^{[k]} - u_{i}^{[k]} \\
v_{i-1}^{[k]} - v_{i}^{[k]} 
\end{bmatrix}, \quad \text{on} \,\ \Gamma_{i-1}, \\
\frac{\partial}{\partial x}\ F_i^{[k]} & = \frac{\partial}{\partial x}\begin{bmatrix} 
u_i^{[k]} - u_{i+1}^{[k]} \\
v_i^{[k]} - v_{i+1}^{[k]} 
\end{bmatrix}, \quad \text{on} \,\ \Gamma_i, 
 \end{cases} \\
    \MoveEqLeft[-5]\text{}
  \end{aligned}
\end{flalign*}
except for the first and last subdomains, where at the physical boundaries the Dirichlet condition in the Dirichlet step and Neumann condition in the Neumann step are replaced by homogeneous Neumann condition. The interface values for the next iteration are then updated as
\begin{equation}\label{trace1d}
\begin{bmatrix} 
g_i^{[k]} \\
h_i^{[k]} 
\end{bmatrix} 
= 
\begin{bmatrix} 
 g_i^{[k-1]} \\
 h_i^{[k-1]} 
\end{bmatrix}
-
\theta
(F_i^{[k]} - F_{i+1}^{[k]})_{\big|_{\Gamma_i}},
\end{equation} 
where
\[
 A = 
\begin{bmatrix} 
1 & -\delta_t\frac{d^2}{dx^2} \\
\epsilon^2\frac{d^2}{dx^2} - c^2 & 1 
\end{bmatrix},\quad
 E_i^{[k]} = \begin{bmatrix} 
 u_i^{[k]}(x) \\
 v_i^{[k]}(x)
\end{bmatrix},\quad
 F_i^{[k]} = \begin{bmatrix} 
 \phi_i^{[k]}(x) \\
 \psi_i^{[k]}(x)
\end{bmatrix}.
\]
Similar to the case of one dimensional DN, we get the subdomain solution at $k-$th iteration for the Dirichlet step 
\[
E_i^{[k]} = 
\begin{bmatrix} 
\frac{\delta_t\lambda_1(\eta_{1,i}\sigma_{1,i,x} - \eta_{1,i-1}\sigma_{1,i+1,x})}{\sigma_{1,i}} + \frac{\delta_t\lambda_2(\eta_{2,i}\sigma_{3,i,x} - \eta_{2,i-1}\sigma_{3,i+1,x})}{\sigma_{3,i}} \\
\frac{\eta_{1,i}\sigma_{1,i,x} - \eta_{1,i-1}\sigma_{1,i+1,x}}{\sigma_{1,i}} + \frac{\eta_{2,i}\sigma_{3,i,x} - \eta_{2,i-1}\sigma_{3,i+1,x}}{\sigma_{3,i}}
\end{bmatrix}
\]
for $i=2,\ldots,N-1$ and for the the first and last subdomain we have
\[
E_1^{[k]} = 
\begin{bmatrix} 
\frac{\delta_t\lambda_1\eta_{1,1}\gamma_{1,1,x}}{\gamma_{1,1}} + \frac{\delta_t\lambda_2\eta_{2,1}\gamma_{3,1,x}}{\gamma_{3,1}} \\
\frac{\eta_{1,1}\gamma_{1,1,x}}{\gamma_{1,1}} + \frac{\eta_{2,1}\gamma_{3,1,x}}{\gamma_{3,1}} 
\end{bmatrix},\; 
E_{N}^{[k]} = 
\begin{bmatrix} 
\frac{\delta_t\lambda_1\eta_{1,N-1}\gamma_{1,N+1,x}}{\gamma_{1,N}} + \frac{\delta_t\lambda_2\eta_{2,N-1}\gamma_{3,N+1,x}}{\gamma_{3,N}} \\
\frac{\eta_{1,N-1}\gamma_{1,N+1,x}}{\gamma_{1,N}} + \frac{\eta_{2,N-1}\gamma_{3,N+1,x}}{\gamma_{3,N}} 
\end{bmatrix}
\]
and similarly for the Neumann step, we get
\[
F_{i}^{[k]} = 
\begin{bmatrix} 
\delta_t\lambda_1(C_{i,1}\gamma_{1,i,x} - C_{i,2}\gamma_{1,i+1,x})  + \delta_t\lambda_2(D_{i,1}\gamma_{3,i,x} - D_{i,2}\gamma_{3,i+1,x}) \\
C_{i,1}\gamma_{1,i,x} - C_{i,2}\gamma_{1,i+1,x}  + D_{i,1}\gamma_{3,i,x} - D_{i,2}\gamma_{3,i+1,x}  
\end{bmatrix}
\]
for $i=2,\ldots,N-1$ and for the the first and last subdomain we have
\[
F_{1}^{[k]} =  
\begin{bmatrix} 
\delta_t\lambda_1 C_{1,1}\gamma_{1,1,x}  + \delta_t\lambda_2 D_{1,1}\gamma_{3,1,x} \\
C_{1,1}\gamma_{1,1,x}  + D_{1,1}\gamma_{3,1,x}
\end{bmatrix},\;
F_{N}^{[k]} = 
\begin{bmatrix} 
\delta_t\lambda_1 C_{N,1}\gamma_{1,N+1,x}  + \delta_t\lambda_2 D_{N,1}\gamma_{3,N+1,x} \\
C_{N,1}\gamma_{1,N+1,x}  + D_{N,1}\gamma_{3,N+1,x}
\end{bmatrix}
\]     
where    
\[
\begin{array}{cccccccc}
C_{1,1}=\left(\frac{\eta_{1,1}}{\gamma_{1,1}} + \frac{\eta_{1,1}\gamma_{1,2}}{\sigma_{1,1}\sigma_{1,2}} - \frac{\eta_{1,2}}{\sigma_{1,1}^2}\right), 
D_{1,1} = \left(\frac{\eta_{2,1}}{\gamma_{3,1}} + \frac{\eta_{2,1}\gamma_{3,2}}{\sigma_{3,1}\sigma_{3,2}} - \frac{\eta_{2,2}}{\sigma_{3,1}^2}\right),\\
C_{2,2}=\left(\frac{\eta_{1,1}\sigma_{1,1}}{\sigma_{1,2}\gamma_{1,1}} + \frac{\eta_{1,1}\gamma_{1,2}}{\sigma_{1,2}^2} - \frac{\eta_{1,2}}{\sigma_{1,2}^2}\right),
D_{2,2}=\left(\frac{\eta_{2,1}\sigma_{3,1}}{\sigma_{3,2}\gamma_{3,1}} + \frac{\eta_{2,1}\gamma_{3,2}}{\sigma_{3,2}^2} - \frac{\eta_{2,2}}{\sigma_{3,2}^2}\right),\\
C_{i,1}=\left(-\frac{\eta_{1,i-1}}{\sigma_{1,i}^2} + \frac{\eta_{1,i}\gamma_{1,i}}{\sigma_{1,i}^2} + \frac{\eta_{1,i}\gamma_{1,i+1}}{\sigma_{1,i}\sigma_{1,i+1}} - \frac{\eta_{1,i+1}}{\sigma_{1,i}\sigma_{1,i+1}}\right), \text{for} \; i=2,\ldots,N-1,\\ 
D_{i,1}=\left(-\frac{\eta_{2,i-1}}{\sigma_{3,i}^2} + \frac{\eta_{2,i}\gamma_{3,i}}{\sigma_{3,i}^2} + \frac{\eta_{2,i}\gamma_{3,i+1}}{\sigma_{3,i}\sigma_{3,i+1}} - \frac{\eta_{2,i+1}}{\sigma_{3,i}\sigma_{3,i+1}}\right), \text{for} \; i=2,\ldots,N-1,\\
C_{i,2}=\left(-\frac{\eta_{1,i-2}}{\sigma_{1,i}\sigma_{1,i-1}} + \frac{\eta_{1,i-1}\gamma_{1,i-1}}{\sigma_{1,i}\sigma_{1,i-1}} + \frac{\eta_{1,i-1}\gamma_{1,i}}{\sigma_{1,i}^2} - \frac{\eta_{1,i}}{\sigma_{1,i}^2}\right), \text{for} \;i=3,\ldots,N,\\
D_{i,2}=\left(-\frac{\eta_{2,i-2}}{\sigma_{3,i}\sigma_{3,i-1}} + \frac{\eta_{2,i-1}\gamma_{3,i-1}}{\sigma_{3,i}\sigma_{3,i-1}} + \frac{\eta_{2,i-1}\gamma_{3,i}}{\sigma_{3,i}^2} - \frac{\eta_{2,i}}{\sigma_{3,i}^2}\right), \text{for} \;i=3,\ldots,N,\\
C_{N-1,1}=\left(-\frac{\eta_{1,N-2}}{\sigma_{1,N-1}^2} + \frac{\eta_{1,N-1}\gamma_{1,N-1}}{\sigma_{1,N-1}^2} + \frac{\eta_{1,N-1}\sigma_{1,N}}{\sigma_{1,N-1}\gamma_{1,N}}\right),\\
D_{N-1,1}=\left(-\frac{\eta_{2,N-2}}{\sigma_{3,N-1}^2} + \frac{\eta_{2,N-1}\gamma_{3,N-1}}{\sigma_{3,N-1}^2} + \frac{\eta_{2,N-1}\sigma_{3,N}}{\sigma_{3,N-1}\gamma_{3,N}}\right),\\ 
C_{N,1}=\left( \frac{\eta_{1,N-2}}{\sigma_{1,N}\sigma_{1,N-1}} -\frac{\eta_{1,N-1}}{\gamma_{1,N}} - \frac{\eta_{1,N-1}\gamma_{1,N-1}}{\sigma_{1,N-1}\sigma_{1,N}}\right),
D_{N,1} = \left( \frac{\eta_{2,N-2}}{\sigma_{3,N}\sigma_{3,N-1}} -\frac{\eta_{2,N-1}}{\gamma_{3,N}} - \frac{\eta_{2,N-1}\gamma_{3,N-1}}{\sigma_{3,N-1}\sigma_{3,N}}\right), 
\end{array}
\]
with
$d_{i,x}:=x - x_{i-1}, \sigma_{1,i,x}:=\sinh(\xi_1d_{i,x}),  \sigma_{3,i,x}:=\sinh(\xi_3d_{i,x}),  \gamma_{1,i,x}:=\cosh(\xi_1d_{i,x}),\\ \gamma_{3,i,x}:=\cosh(\xi_3d_{i,x}),   \text{and}\; \sigma_{1,i}:=\sinh(\xi_1d_{i}), \sigma_{3,i}:=\sinh(\xi_3d_{i}), \gamma_{1,i}:=\cosh(\xi_1d_{i}), \gamma_{3,i}:=\cosh(\xi_3d_{i}), \eta_{1,i}: = \frac{g_i^{[k-1]}-\delta_t\lambda_2 h_i^{[k-1]}}{\delta_t\lambda}, \eta_{2,i}: = \frac{g_i^{[k-1]}-\delta_t\lambda_1 h_i^{[k-1]}}{-\delta_t\lambda},  \text{for}\; i=1,\ldots,N.
$
Using the above subdomain solutions in \eqref{trace1d}, the update condition is reduced to the following form 
\begin{equation}\label{iter1dmulti}
 \begin{bmatrix} 
\textbf{g}^{[k]} \\
\textbf{h}^{[k]} 
\end{bmatrix} 
= \mathbb{T}
\begin{bmatrix} 
\textbf{g}^{[k-1]} \\
\textbf{h}^{[k-1]} 
\end{bmatrix}
\end{equation} 
where $
 \begin{bmatrix} 
\textbf{g}^{[k]} & \textbf{h}^{[k]}
\end{bmatrix}^\mathsf{T} := [g_1^{[k]}, h_1^{[k]}, g_2^{[k]}, h_2^{[k]}, ..., g_{N-1}^{[k]}, h_{N-1}^{[k]}]^\mathsf{T} $
and the iteration matrix $\mathbb{T} \in\mathbb{R}^{2N-2\times 2N-2}$ is given by 
{\footnotesize
\begin{equation}\label{Iterationmatrix}
\mathbb{T} = \begin{bmatrix} 
\alpha_1^{1} & \dots & \dots& \alpha_1^{6} \\ 
\beta_1^{1} & \dots & \dots& \beta_1^{6} \\ 
\alpha_2^{1}&\alpha_2^{2} & \dots & \dots& \alpha_2^{7} & \alpha_2^{8}\\ 
\beta_2^{1}&\beta_2^{2} & \dots& \dots & \beta_2^{7}& \beta_2^{8} \\
\alpha_3^{1}&\alpha_3^{2}&\alpha_3^{3}&\dots&\dots&\dots &\alpha_3^{9}&\alpha_3^{10}\\
\beta_3^{1}&\beta_3^{2}&\beta_3^{3} &\dots&\dots&\dots& \beta_3^{9} & \beta_3^{10}\\
 &  & \alpha_4^{1} & \alpha_4^{2} & \dots& \dots& \dots& \dots &\alpha_4^{9}& \alpha_4^{10} \\
 &  & \beta_4^{1} & \beta_4^{2} & \dots& \dots& \dots& \dots &\beta_4^{9}& \beta_4^{10} \\
&  & & & & \ddots & \ddots &\ddots & \ddots\\
&  & & & \alpha_{N-4}^{1} & \alpha_{N-4}^{2}&\alpha_{N-4}^{3} & \dots& \dots& \dots & \alpha_{N-4}^{9}& \alpha_{N-4}^{10}& &\\
 & & & & \beta_{N-4}^{1} & \beta_{N-4}^{2}&\beta_{N-4}^{3} & \dots& \dots& \dots & \beta_{N-4}^{9}& \beta_{N-4}^{10}& &\\
& & & & &  & \alpha_{N-3}^{1} & \alpha_{N-3}^{2}&\alpha_{N-3}^{3} & \dots& \dots& \dots & \alpha_{N-3}^{9}& \alpha_{N-3}^{10}\\
& & & & &  & \beta_{N-3}^{1} & \beta_{N-3}^{2}&\beta_{N-3}^{3} & \dots& \dots& \dots & \beta_{N-3}^{9}& \beta_{N-3}^{10}\\
& & & & & &  &  &\alpha_{N-2}^{1}&\alpha_{N-2}^{2}&\alpha_{N-2}^{3}  & \dots & \dots& \alpha_{N-2}^{8}\\
& & & & & & &   &\beta_{N-2}^{1}&\beta_{N-2}^{2}&\beta_{N-2}^{3}  & \dots& \dots & \beta_{N-2}^{8}\\
& & & & & & & & & &\alpha_{N-1}^{1}  & \dots & \dots & \alpha_{N-1}^{6}\\
& & & & & & & & & &\beta_{N-1}^{1}  & \dots & \dots & \beta_{N-1}^{6}\\
\end{bmatrix}.
\end{equation}
\par}
\noindent The explicit expressions of the elements of the matrix $\mathbb{T}$ are given in Appendix A.
We now analyze the convergence behaviour for the case $\delta_t > \frac{4\epsilon^2}{c^4}$, for which we need the following Lemma. 
\begin{lemma}\label{NNlemmamultiple}
For $t>0$, we have $\frac{\cosh(t)}{\sinh^2(t)} < \frac{2}{t^2}$
\end{lemma}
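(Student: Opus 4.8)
The plan is to reduce the claimed inequality $\dfrac{\cosh t}{\sinh^2 t}<\dfrac{2}{t^2}$ to an elementary inequality among hyperbolic functions and then verify that by a power-series comparison. First I would clear denominators: since $\sinh t>0$ and $t>0$ for $t>0$, the inequality is equivalent to $t^2\cosh t<2\sinh^2 t$. Using the identity $\sinh^2 t=\tfrac12(\cosh 2t-1)$, this is the same as $t^2\cosh t<\cosh 2t-1$. So it suffices to prove $g(t):=\cosh 2t-1-t^2\cosh t>0$ for all $t>0$.

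The cleanest route is a Taylor-coefficient comparison. Expanding, $\cosh 2t-1=\sum_{n\ge1}\dfrac{2^{2n}}{(2n)!}t^{2n}$ and $t^2\cosh t=\sum_{n\ge0}\dfrac{1}{(2n)!}t^{2n+2}=\sum_{n\ge1}\dfrac{1}{(2n-2)!}t^{2n}$. Hence $g(t)=\sum_{n\ge1}\Big(\dfrac{2^{2n}}{(2n)!}-\dfrac{1}{(2n-2)!}\Big)t^{2n}$, and the $n$-th coefficient is nonnegative precisely when $2^{2n}\ge (2n)(2n-1)$. For $n=1$ this reads $4\ge 2$, for $n=2$ it reads $16\ge 12$, and for $n\ge3$ an easy induction (or the fact that $4^n$ grows much faster than the quadratic $2n(2n-1)$) gives strict inequality; in fact the first coefficient alone already contributes a strictly positive $t^2$ term, so $g(t)>0$ for every $t>0$. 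This proves $t^2\cosh t<2\sinh^2 t$, equivalently the stated bound.

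An alternative, derivative-based argument would also work: set $h(t)=2\sinh^2 t-t^2\cosh t=\cosh 2t-1-t^2\cosh t$, note $h(0)=0$, and differentiate, checking $h'(0)=0$, $h''(0)=0$, and that a sufficiently high derivative is positive on $(0,\infty)$ so that repeated integration from $0$ keeps all the lower derivatives positive; but this just re-derives the coefficient comparison in disguise. The only mild subtlety — the ``hard part'' such as it is — is making sure the coefficientwise inequality $2^{2n}\ge 2n(2n-1)$ holds for all $n\ge1$ rather than merely eventually; since it already holds at $n=1,2$ and the exponential dominates the quadratic thereafter, there is no gap, and strict positivity of the sum follows from the $n=1$ term. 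I would therefore present the proof in the power-series form, as it is the shortest and leaves no case unchecked.
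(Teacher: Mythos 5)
Your proof is correct, but it takes a genuinely different route from the paper. The paper works with the auxiliary function $f(t)=t-\sinh(t)\left(\tfrac{\cosh t}{2}\right)^{-1/2}$, shows $f(0)=0$, and proves $f'<0$ on $(0,\infty)$ by verifying that $f_1(t)=\tfrac{2+\sinh^2 t}{\sqrt{2}\cosh^{3/2}t}$ stays above $1$; this last step requires locating the minimizer of $f_1$ at $t=\acosh(\sqrt 3)$ and checking the numerical inequality $\tfrac{2\sqrt 2}{3^{3/4}}>1$. Your reduction to $t^2\cosh t<\cosh 2t-1$ followed by the coefficientwise comparison $2^{2n}\ge 2n(2n-1)$ is more elementary: it avoids fractional powers, calculus of auxiliary functions, and any numerical evaluation, and the strictness comes cleanly from the $n=1$ coefficient equal to $1$. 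The one detail worth tightening in a written version is the induction: the step $4^{n+1}\ge 8n(2n-1)\ge (2n+2)(2n+1)$ fails for $n=1$ (since $12n^2-14n-2<0$ there) and only holds for $n\ge 2$, so you must anchor the induction at $n=2$ after checking $n=1,2$ by hand — which you do, so there is no gap, but the phrasing ``for $n\ge 3$ an easy induction'' should make explicit that the base case is $n=2$. Either approach proves the lemma; yours is shorter and leaves nothing to numerical verification.
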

\begin{proof}
Consider the function $f(t) = t - \sinh(t)\left(\frac{\cosh(t)}{2}\right)^{-1/2}$. We have $f(0)=0$ and $f'(t) = 1-f_1(t)$, where $f_1(t)=\frac{2+\sinh^2(t)}{\sqrt{2}\cosh^{3/2}(t)}$. We now show that $f_1(t)>1$, for all $t>0$. We have 
\[f_1'(t) = \frac{\sqrt{2}\sinh(t)(\cosh^2(t)-3)}{4\cosh^{5/2}(t)}.\]
So the function $f_1(t)$ becomes monotonically decreasing if $\cosh^2(t)<3$, i.e in the interval $(0, \acosh(\sqrt{3}))$ or $(0, 1.1462)$. Hence the minimum value of $f_1(t)$ is $\lim\limits_{t\rightarrow \acosh(\sqrt{3})} f_1(t) = \frac{2\sqrt{2}}{3^{3/4}}$, which is strictly grater than one. And the function $f_1(t)$ becomes monotonically increasing if $\cosh^2(t)\geq 3$, i.e in the interval $[ \acosh(\sqrt{3}), \infty)$, so $f_1(t)> \frac{2\sqrt{2}}{3^{3/4}}$. Hence the function $f(t)$ is monotonically decreasing, i.e $f(t) < f(0)$ for all $t>0$. This completes the result. 
\end{proof}

\begin{theorem}[Convergence of NN for multiple subdomains]\label{NNequalthm}
For $\theta=1/4$, the NN algorithm \eqref{MNCH}-\eqref{MNNS} for multiple subdomains with equal length $d$, satisfying $d > d^*$, is convergent. Moreover, we have the following estimates
\[
\max_{1 \leq i \leq N-1} \parallel g_i^{[k]} \parallel_{L^\infty(\Gamma_i)} 
<  (\alpha^*)^k \max_{1 \leq i \leq N-1}\left\{\parallel g_i^{[0]} \parallel_{L^\infty(\Gamma_i)}, \parallel h_i^{[0]} \parallel_{L^\infty(\Gamma_i)}\right\},
\]
\[
\max_{1 \leq i \leq N-1}\parallel h_i^{[k]} \parallel_{L^\infty(\Gamma_i)} 
<  (\beta^*)^k \max_{1 \leq i \leq N-1}\left\{\parallel g_i^{[0]} \parallel_{L^\infty(\Gamma_i)}, \parallel h_i^{[0]} \parallel_{L^\infty(\Gamma_i)}\right\},
\]
where the expression of $\alpha^*, \beta^*, d^*$ are given in  \eqref{alphastar1dequal}, \eqref{betastar1dequal}, \eqref{dstar1dequal} respectively.
\end{theorem}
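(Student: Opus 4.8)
The plan is to reduce the convergence analysis of the multi-subdomain NN iteration matrix $\mathbb{T}$ to a componentwise $L^\infty$ estimate on its entries, exactly as was done for the two-subdomain DN case in Theorem~\ref{NLD1d}, but now accounting for the fact that each row of $\mathbb{T}$ has up to ten nonzero entries coupling neighbouring interfaces. First I would specialize all the hyperbolic quantities $\sigma_{1,i}, \sigma_{3,i}, \gamma_{1,i}, \gamma_{3,i}$ to the equal-length case $d_i = d$, so that $\sigma_{1,i,x}, \gamma_{1,i,x}$ etc.\ depend only on $d$ and the local coordinate, and the matrix $\mathbb{T}$ becomes (block) banded with repeating interior rows. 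Under the assumption $\delta_t > \tfrac{4\epsilon^2}{c^4}$, $\lambda_{1,2}$ are real and positive, hence $\xi_1 = \sqrt{\lambda_1}, \xi_3 = \sqrt{\lambda_2}$ are real and positive, so all $\sinh$ and $\cosh$ terms are positive real numbers and no cancellation subtleties arise. With $\theta = 1/4$ fixed, I would write out the generic interior row entries $\alpha_i^j, \beta_i^j$ (from Appendix~A) in terms of the building blocks $\tfrac{\cosh(\xi_\ell d)}{\sinh^2(\xi_\ell d)}$, $\tfrac{1}{\sinh^2(\xi_\ell d)}$, $\tfrac{\cosh(\xi_\ell d)}{\sinh(\xi_\ell d)}$ for $\ell = 1,3$, the point being that the $\theta = 1/4$ choice is exactly what makes the leading diagonal contribution collapse.

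Next I would bound the row sums. The decisive tool is Lemma~\ref{NNlemmamultiple}: the terms of the form $\tfrac{\cosh(\xi_\ell d)}{\sinh^2(\xi_\ell d)}$ are each $< \tfrac{2}{(\xi_\ell d)^2}$, and the remaining terms $\tfrac{1}{\sinh^2(\xi_\ell d)}$ and $\tfrac{\cosh(\xi_\ell d)}{\sinh(\xi_\ell d)} - 1$ decay exponentially in $\xi_\ell d$; so for $d$ large enough every off-diagonal contribution is small and the total absolute row sum of $\mathbb{T}$ can be made strictly less than $1$. Concretely, I would define $\alpha^*$ (resp.\ $\beta^*$) as the supremum over interior rows of the absolute sum of the $\alpha$-entries (resp.\ $\beta$-entries), express it as an explicit function of $\xi_1 d$ and $\xi_3 d$ via Lemma~\ref{NNlemmamultiple}, and then define $d^*$ as the threshold beyond which $\alpha^*, \beta^* < 1$ — this is where the displayed formulas \eqref{alphastar1dequal}, \eqref{betastar1dequal}, \eqref{dstar1dequal} come from. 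One must also check separately that the boundary rows (indices $1$ and $N-1$, with their $\gamma_{1,1}/\gamma_{1,N}$-type denominators coming from the homogeneous Neumann condition at the physical boundary) obey the same bound; these are actually easier because $\cosh$ in the denominator is bounded below by $1$. Then the $L^\infty$ recursion $\|\mathbf{g}^{[k]}\|_\infty \le \alpha^* \max\{\|\mathbf{g}^{[k-1]}\|_\infty, \|\mathbf{h}^{[k-1]}\|_\infty\}$ and similarly for $\mathbf{h}$ follows by reading off \eqref{iter1dmulti} row by row, and iterating gives the stated geometric estimates.

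The main obstacle I anticipate is the bookkeeping in the second step: unlike the clean $2\times 2$ iteration matrix $\mathbb{H}$ of the DN analysis, here each $C_{i,1}, D_{i,1}, C_{i,2}, D_{i,2}$ is a four-term combination of the $\eta_{1,i}, \eta_{2,i}$ (which themselves repackage $g_i^{[k-1]}, h_i^{[k-1]}$ through the $\delta_t\lambda$ factors), so when one pushes these through the update \eqref{trace1d} the coefficient of each $g_j^{[k-1]}$ and $h_j^{[k-1]}$ in $g_i^{[k]}$ is a sum of several such pieces, and one has to verify that the change of variables $(\eta_{1,i}, \eta_{2,i}) \leftrightarrow (g_i, h_i)$ does not spoil the contraction — i.e.\ that the $L^\infty$ operator norm bound survives the back-substitution. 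I would handle this by bounding $|g_i^{[k]}|$ and $|h_i^{[k]}|$ directly in terms of $\max_j\{|g_j^{[k-1]}|, |h_j^{[k-1]}|\}$, using that $|\eta_{1,i}|, |\eta_{2,i}| \le \tfrac{1}{\delta_t|\lambda|}(1 + \delta_t|\lambda_{1,2}|)\max\{|g_i^{[k-1]}|,|h_i^{[k-1]}|\}$ and that the explicit prefactors $\delta_t\lambda_1, \delta_t\lambda_2$ multiplying the $\phi$-components in $F_i^{[k]}$ cancel the $\tfrac{1}{\delta_t\lambda}$ in the $\eta$'s, leaving dimensionally clean hyperbolic ratios to which Lemma~\ref{NNlemmamultiple} applies. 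Everything else — positivity of $\xi_1, \xi_3$, monotonicity of $\coth$, the decay of $1/\sinh^2$ — is routine and already packaged in the lemmas available in the excerpt.
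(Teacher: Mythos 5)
Your proposal follows essentially the same route as the paper's proof: bound $\|\mathbb{T}\|_\infty$ row by row using Lemma \ref{NNlemmamultiple} together with $\sinh(\xi_j d)>\xi_j d$, exploit the cancellation of the constant term in the diagonal entry $\alpha_i^5$ (via $\gamma_j^2=1+\sigma_j^2$ and $\tfrac{\lambda_1-\lambda_2}{\lambda}=1$), treat the boundary rows separately, and close with the componentwise $L^\infty$ recursion. The only cosmetic difference is that the paper bounds every residual term algebraically by $1/(\xi_j d)^2$ rather than invoking exponential decay, which is what yields the explicit constants $\alpha^*=\beta^*$-type formulas in \eqref{alphastar1dequal}--\eqref{dstar1dequal}.
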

\begin{proof}
For equi-length subdomains we have $d_i = d$ for $i=1,\ldots,N$, so that $\sigma_{j,i} = \sigma_{j}, \gamma_{j, i}=\gamma_{j}$ for all $i$ and for $j = 1, 3$. The infinity-norm of $\mathbb{T}$ is given by 
\begin{equation}\label{normh}
\parallel \mathbb{T} \parallel_{\infty} = \max_{1 \leq i \leq N-1} \left\{ \sum_{j=1}^{S_i} \vert \alpha_i^j \vert, \sum_{j=1}^{S_i} \vert \beta_i^j \vert\right\}, 
\end{equation}
where 
\[
S_i = \begin{cases}
    & 6, \;\text{for}\; i=1, N-1 \\
    & 8, \;\text{for}\; i=2, N-2  \\
    & 10, \;\text{for}\; 3 \leq i \leq N-3.
\end{cases}
\]
We show that $\parallel \mathbb{T} \parallel_{\infty}$, is strictly smaller than one. For equal subdomains, we have for $3 \leq i \leq N-3$, 
\begin{equation*}
\begin{aligned}
\alpha_i^1 & = \frac{1}{4\lambda}\left( -\frac{\lambda_1}{\sigma_1^2} + \frac{\lambda_2}{\sigma_3^2} \right)= \alpha_i^9\\
\alpha_i^2 & = \frac{\delta_t\lambda_1\lambda_2}{4\lambda}\left( \frac{1}{\sigma_1^2} - \frac{1}{\sigma_3^2} \right) =\alpha_i^{10}\\
\alpha_i^3 &= \frac{1}{\lambda}\left( \frac{\lambda_1\gamma_1}{\sigma_1^2} - \frac{\lambda_2\gamma_3}{\sigma_3^2} \right)=\alpha_i^7\\
\alpha_i^4 & = \frac{\delta_t\lambda_1\lambda_2}{\lambda}\left(- \frac{\gamma_1}{\sigma_1^2} + \frac{\gamma_3}{\sigma_3^2} \right)
=\alpha_i^8\\
\alpha_i^5 &= 1 - \frac{\lambda_1}{4\lambda}\left( \frac{4\gamma_1^2}{\sigma_1^2} + \frac{2}{\sigma_1^2} \right) + \frac{\lambda_2}{4\lambda}\left( \frac{4\gamma_3^2}{\sigma_3^2} + \frac{2}{\sigma_3^2} \right)\\
\alpha_i^6 & =  \frac{\delta_t\lambda_1\lambda_2}{4\lambda}\left( \frac{4\gamma_1^2}{\sigma_1^2} + \frac{2}{\sigma_1^2} \right) - \frac{\delta_t\lambda_1\lambda_2}{4\lambda}\left( \frac{4\gamma_3^2}{\sigma_3^2} + \frac{2}{\sigma_3^2} \right).
\end{aligned}
\end{equation*}
Using the fact that $\xi_1 = \sqrt{\lambda_1}, \xi_3 = \sqrt{\lambda_2}$ and $\sigma_j > \xi_j d$ for $j=1, 3$, we have 
\[ 
\vert \alpha_i^1 \vert \leq \frac{1}{4\lambda} \left(\left| -\frac{\lambda_1}{\sigma_1^2} \right| + \left| \frac{\lambda_2}{\sigma_3^2} \right|\right) \leq \frac{1}{4\lambda} \left(\frac{\lambda_1}{\xi_1^2 d^2} + \frac{\lambda_2}{\xi_3^2 d^2}\right) = \frac{1}{2\lambda d^2}.
\]
Since $\lambda_2 < \lambda_1$, we have $\vert\alpha_i^2\vert \leq \frac{\delta_t\lambda_1}{2\lambda d^2}.$
Using Lemma \ref{NNlemmamultiple} we get the estimates for the terms $\vert\alpha_i^3\vert, \vert\alpha_i^4\vert$ as: 
\[
\begin{array}{cc} 
\vert \alpha_i^3 \vert \leq \frac{1}{\lambda} \left(\frac{2\lambda_1}{\xi_1^2 d^2} + \frac{2\lambda_2}{\xi_3^2 d^2}\right) \leq \frac{4}{\lambda d^2}, \\
\vert \alpha_i^4 \vert \leq \frac{\delta_t\lambda_1\lambda_2}{\lambda} \left(\frac{2}{\xi_1^2 d^2} + \frac{2}{\xi_3^2 d^2}\right) \leq \frac{4\delta_t\lambda_1}{\lambda d^2},
\end{array}
\]
Using the identity $\gamma_j^2 = 1 + \sigma_j^2, \text{and}\; \sigma_j > \xi_j d$ for $j = 1, 3$ we have
\begin{equation*}
\begin{aligned}
\vert\alpha_i^5\vert  = & \left| 1 - \frac{\lambda_1}{4\lambda}\left( \frac{4\gamma_1^2}{\sigma_1^2} + \frac{2}{\sigma_1^2} \right) + \frac{\lambda_2}{4\lambda}\left( \frac{4\gamma_3^2}{\sigma_3^2} + \frac{2}{\sigma_3^2} \right) \right| \\
 = & \left|  1 - \frac{\lambda_1 - \lambda_2}{\lambda} - \frac{\lambda_1}{4\lambda} \frac{6}{\sigma_1^2} + \frac{\lambda_2}{4\lambda} \frac{6}{\sigma_3^2}\right|\\
 \leq &  \left(\frac{\lambda_1}{4\lambda}  \frac{6}{\xi_1^2 d^2} + 
\frac{\lambda_2}{4\lambda}  \frac{6}{\xi_3^2 d^2}\right)
= \frac{3}{\lambda d^2}.
\end{aligned}
\end{equation*}
Similarly we get the estimate for $\vert\alpha_i^6\vert $ as:
\begin{equation*}
\begin{aligned}
\vert\alpha_i^6\vert  = & \left| \frac{\delta_t\lambda_1\lambda_2}{4\lambda}\left( \frac{4\gamma_1^2}{\sigma_1^2} + \frac{2}{\sigma_1^2} \right) - \frac{\delta_t\lambda_1\lambda_2}{4\lambda}\left( \frac{4\gamma_3^2}{\sigma_3^2} + \frac{2}{\sigma_3^2} \right)\right| \\
= & \left| \frac{\delta_t\lambda_1\lambda_2}{4\lambda}\left(\frac{6}{\sigma_1^2} +\frac{6}{\sigma_3^2} \right)\right|
\leq \frac{3\delta_t\lambda_1}{\lambda d^2}.
\end{aligned}
\end{equation*}
Therefore we have the estimate of $\sum_{j=1}^{10} \vert \alpha_i^j \vert$ for $3 \leq i \leq N-3$, as 
\begin{equation}\label{alphastar1dequal}
\sum_{j=1}^{10} \vert \alpha_i^j \vert < \alpha^* = \frac{12(1 + \delta_t\lambda_1)}{\lambda d^2}.
\end{equation}
In a similar fashion we obtain the estimate of $\sum_{j=1}^{10} \vert \beta_i^j \vert$ for $3 \leq i \leq N-3$, as 
\begin{equation}\label{betastar1dequal}
\sum_{j=1}^{10} \vert \beta_i^j \vert < \beta^* = \frac{12(1 + \delta_t\lambda_1)}{\delta_t\lambda\lambda_2 d^2}.
\end{equation}
Now if we take 
\begin{equation}\label{dstar1dequal}
d > d^* = \max \left\{\sqrt{\frac{12(1 + \delta_t\lambda_1)}{\lambda}}, \sqrt{\frac{12(1 + \delta_t\lambda_1)}{\delta_t\lambda\lambda_2}} \right\},
\end{equation}
 then $\parallel \mathbb{T} \parallel_{\infty}$ is strictly less than one. The same expression of $d$ in \eqref{dstar1dequal} works for the rows $i =1, 2, N-2, N-1$. So we get the convergence. It remains now to prove the estimates. For $3 \leq i \leq N-3$, from the iteration matrix \eqref{Iterationmatrix}, we have
\begin{equation}\label{gik}
\begin{aligned}
g_i^{[k]} & = \alpha_i^1 g_{i-2}^{[k-1]} + \alpha_i^2 h_{i-2}^{[k-1]} + \alpha_i^3 g_{i-1}^{[k-1]} +\alpha_i^4 h_{i-1}^{[k-1]} +\alpha_i^5 g_{i}^{[k-1]}  \\
& + \alpha_i^6 h_{i}^{[k-1]} + \alpha_i^7 g_{i+1}^{[k-1]} + \alpha_i^8 h_{i+1}^{[k-1]} + \alpha_i^9 g_{i+2}^{[k-1]} + \alpha_i^{10} h_{i+2}^{[k-1]}.
\end{aligned}
\end{equation}
Now, if we take max norm on both sides of \eqref{gik} and use triangle inequality we get 
\begin{equation*}
\begin{aligned}
\parallel g_i^{[k]} \parallel_{\infty} \leq  & \left(\sum_{j=1}^{10} \vert \alpha_i^j \vert\right) \max_{1 \leq i \leq N-1}\left\{\parallel g_i^{[k-1]} \parallel_{\infty}, \parallel h_i^{[k-1]} \parallel_{\infty}\right\}\\
< &  \alpha^* \max_{1 \leq i \leq N-1}\left\{\parallel g_i^{[k-1]} \parallel_{\infty}, \parallel h_i^{[k-1]} \parallel_{\infty}\right\}.
\end{aligned}
\end{equation*}
Similarly we obtain for $3 \leq i \leq N-3$
\[ 
\parallel h_i^{[k]} \parallel_{\infty} 
<  \beta^* \max_{1 \leq i \leq N-1}\left\{\parallel g_i^{[k-1]} \parallel_{\infty}, \parallel h_i^{[k-1]} \parallel_{\infty}\right\}. 
\]
One can show that the same bounds also hold for the remaining subdomains $i =1, 2, N-2, N-1$. This completes the theorem.
\end{proof}
\begin{theorem}[Convergence of NN for unequal subdomain]\label{NNunequalthm}
For $\theta=1/4$, the NN algorithm \eqref{MNCH}-\eqref{MNNS} for multiple subdomains with unequal length, satisfying $d_{\min} > d_*$, is convergent. Moreover, we have the following estimates
\[
\max_{1 \leq i \leq N-1} \parallel g_i^{[k]} \parallel_{L^\infty(\Gamma_i)} 
<  (\bar{\alpha})^k \max_{1 \leq i \leq N-1}\left\{\parallel g_i^{[0]} \parallel_{L^\infty(\Gamma_i)}, \parallel h_i^{[0]} \parallel_{L^\infty(\Gamma_i)}\right\},
\]
\[
\max_{1 \leq i \leq N-1}\parallel h_i^{[k]} \parallel_{L^\infty(\Gamma_i)} 
<  (\bar{\beta})^k \max_{1 \leq i \leq N-1}\left\{\parallel g_i^{[0]} \parallel_{L^\infty(\Gamma_i)}, \parallel h_i^{[0]} \parallel_{L^\infty(\Gamma_i)}\right\},
\]
where the expression of $\bar{\alpha}, \bar{\beta}, \bar{d}$ are given in \eqref{alphastar1dunequal}, \eqref{betastar1dunequal}, \eqref{dstar1dunequal} respectively.
\end{theorem}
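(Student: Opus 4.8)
The plan is to mimic the proof of Theorem~\ref{NNequalthm}: establish that the $(2N-2)\times(2N-2)$ iteration matrix $\mathbb{T}$ of \eqref{iter1dmulti} is a contraction in the maximum norm as soon as the shortest subdomain is long enough, and then read off the componentwise geometric estimates from the recurrence. The only genuinely new feature relative to the equi-length case is that the hyperbolic symbols $\sigma_{j,i}=\sinh(\xi_j d_i)$, $\gamma_{j,i}=\cosh(\xi_j d_i)$ now depend on $i$ through $d_i$, so they cannot be factored out of the entries of $\mathbb{T}$ uniformly; every estimate has to be re-expressed in terms of $d_{\min}:=\min_{1\le i\le N} d_i$, and this is precisely why only $d_{\min}$ (and not an average of the $d_i$) enters the threshold.

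First I would assemble the elementary toolkit. Under the standing hypothesis $\delta_t>4\epsilon^2/c^4$ the roots satisfy $\lambda_1>\lambda_2>0$, so $\xi_1=\sqrt{\lambda_1}$ and $\xi_3=\sqrt{\lambda_2}$ are real and positive. From $\sinh t>t$ for $t>0$ we get $\sigma_{j,i}>\xi_j d_i\ge\xi_j d_{\min}$, hence $1/\sigma_{j,i}^2<1/(\xi_j^2 d_{\min}^2)$; from Lemma~\ref{NNlemmamultiple}, $\gamma_{j,i}/\sigma_{j,i}^2<2/(\xi_j^2 d_i^2)\le 2/(\xi_j^2 d_{\min}^2)$; the trivial bounds $\tanh t<1$, $\sech t<1$ dispose of ratios such as $\sigma_{j,i}/(\gamma_{j,i}\sigma_{j,i'})$ and $1/(\gamma_{j,i}\sigma_{j,i'})$; and for the mismatched cross-terms, the only ones in which $\cosh$ appears with a different argument than one of the surrounding $\sinh$'s, I would use $\coth t<1+1/t$ ($t>0$) in the form
\[
\frac{\gamma_{j,i\pm1}}{\sigma_{j,i}\,\sigma_{j,i\pm1}}=\frac{\coth(\xi_j d_{i\pm1})}{\sigma_{j,i}}<\frac{1}{\xi_j d_{\min}}\Big(1+\frac{1}{\xi_j d_{\min}}\Big).
\]
Together with $\lambda_2<\lambda_1$, $\xi_1^2=\lambda_1$, $\xi_3^2=\lambda_2$ and $\lambda=\lambda_1-\lambda_2$, these bounds control every hyperbolic ratio occurring in the quantities $C_{i,\bullet}$, $D_{i,\bullet}$, and therefore in the matrix entries $\alpha_i^j,\beta_i^j$ listed in Appendix A.

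Next I would substitute these estimates into the explicit entries of $\mathbb{T}$ and sum absolute values along each row, using $\parallel\mathbb{T}\parallel_\infty=\max_{1\le i\le N-1}\{\sum_j|\alpha_i^j|,\sum_j|\beta_i^j|\}$ as in \eqref{normh}. For the interior rows $3\le i\le N-3$ this produces $\sum_j|\alpha_i^j|<\bar\alpha$ and $\sum_j|\beta_i^j|<\bar\beta$, with $\bar\alpha,\bar\beta$ explicit rational expressions in $\lambda_1,\lambda_2,\lambda,\delta_t$ and in $1/d_{\min}$, $1/d_{\min}^2$ — the unequal-length analogues of \eqref{alphastar1dequal} and \eqref{betastar1dequal}; one then checks that the boundary rows $i=1,2,N-2,N-1$, which contain strictly fewer nonzero entries, are dominated by the same $\bar\alpha,\bar\beta$. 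Defining $d_*$ to be the threshold (the analogue of \eqref{dstar1dequal}) for which $\bar\alpha<1$ and $\bar\beta<1$ hold simultaneously and requiring $d_{\min}>d_*$ forces $\parallel\mathbb{T}\parallel_\infty<1$, which gives convergence. For the refined estimates I would isolate from \eqref{iter1dmulti} the scalar row defining $g_i^{[k]}$ (resp.\ $h_i^{[k]}$), take the $L^\infty(\Gamma_i)$ norm — a single absolute value since each $\Gamma_i$ is a point in 1D — and apply the triangle inequality with the row-sum bound to obtain $\parallel g_i^{[k]}\parallel_{L^\infty(\Gamma_i)}<\bar\alpha\max_{i'}\{\parallel g_{i'}^{[k-1]}\parallel_{L^\infty},\parallel h_{i'}^{[k-1]}\parallel_{L^\infty}\}$, and the same with $\bar\beta$ for $h_i^{[k]}$; iterating over $k$ yields the stated geometric decay.

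I expect the main obstacle to be exactly the bookkeeping for the mismatched-argument cross-terms such as $\gamma_{j,i+1}/(\sigma_{j,i}\sigma_{j,i+1})$ and $\eta_{1,i-2}/(\sigma_{1,i}\sigma_{1,i-1})$: in the equi-length case these collapsed to powers of a single $\sigma_j$ and were controlled directly by Lemma~\ref{NNlemmamultiple}, whereas here each carries a $\coth$ with its own argument that must be bounded separately, which is what forces the appearance of $d_{\min}$ rather than a length average in $d_*$. The secondary difficulty is quantitative: one must keep the constants in $\bar\alpha,\bar\beta,d_*$ tight enough that the shorter boundary-row sums are genuinely subsumed under the same threshold, rather than needing a separate larger bound for $i=1,2,N-2,N-1$.
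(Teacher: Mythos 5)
Your overall strategy is exactly the paper's: bound $\parallel\mathbb{T}\parallel_{\infty}$ row by row via \eqref{normh} after replacing every $d_i$ by $d_{\min}$, conclude contraction once $d_{\min}$ exceeds a threshold, and read the geometric estimates off the rows of \eqref{iter1dmulti} as in the second half of Theorem \ref{NNequalthm}. The one place you genuinely diverge is the treatment of the mismatched cross-terms $\gamma_{j,i\pm1}/(\sigma_{j,i}\sigma_{j,i\pm1})$. The paper does not use $\coth t<1+1/t$; it uses only that $\coth$ is decreasing for positive arguments, so $\coth(\xi_j d_{i\pm1})\le\coth(\xi_j d_{\min})$ and $1/\sigma_{j,i}\le 1/\sinh(\xi_j d_{\min})$, which collapses every cross-term to $\gamma_j/\sigma_j^2$ evaluated at $d_{\min}$; Lemma \ref{NNlemmamultiple} then bounds this by $2/(\xi_j^2 d_{\min}^2)$, exactly as in the equal-length case. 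This keeps every entry $O(1/d_{\min}^2)$ and is what produces the specific constants \eqref{alphastar1dunequal}, \eqref{betastar1dunequal} and the threshold \eqref{dstar1dunequal} that the theorem statement cites. Your inequality $\coth t<1+1/t$ is true, but it injects an $O(1/(\xi_j d_{\min}))$ leading term into each cross-term and hence into the row sums; the resulting $\bar{\alpha},\bar{\beta}$ would scale like $c_1/d_{\min}+c_2/d_{\min}^2$ rather than $c/d_{\min}^2$, would be strictly weaker than the paper's bound once $\xi_j d_{\min}>1$ (the regime of interest), and would not reproduce the quantities the theorem actually asserts. The fix is one line: the monotonicity argument you already invoke for the $|\alpha_i^3|$-type terms applies verbatim to the cross-terms, after which Lemma \ref{NNlemmamultiple} finishes them. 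Everything else --- the $\sinh t>t$ bounds, the use of $\gamma_{j,i}^2=1+\sigma_{j,i}^2$ in $\alpha_i^5$, the claim that the shorter boundary rows $i=1,2,N-2,N-1$ are subsumed under the same bound, and the iteration of the row-wise triangle inequality to get the stated decay --- matches the paper's proof.
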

\begin{proof}
Suppose $d_{\min} = \min_{1\leq i \leq N}d_i$. Define $\sigma_j := \sinh(\xi_j d_{\min}), \gamma_j := \cosh(\xi_j d_{\min})$ for $j = 1, 3$. We show the infinity-norm of $\mathbb{T}$ as given in \eqref{normh} is strictly less than one. Since $\sinh$ is an increasing function and $\sigma_j > \xi_j d_{\min}$ for $j=1, 3$, we obtain
\begin{equation*}
\begin{aligned} 
\vert \alpha_i^1 \vert = & \left| - \frac{\lambda_1}{4\lambda\sigma_{1,i}\sigma_{1,i-1}} + \frac{\lambda_2}{4\lambda\sigma_{3,i}\sigma_{3,i-1}} \right| \\
\leq & \frac{1}{4\lambda} \left(\frac{\lambda_1}{\sigma_1^2} + \frac{\lambda_2}{\sigma_3^2}\right) 
< \frac{1}{4\lambda} \left(\frac{\lambda_1}{\xi_1^2 d_{\min}^2} + \frac{\lambda_2}{\xi_3^2 d_{\min}^2}\right) 
= \frac{1}{2\lambda d_{\min}^2}.
\end{aligned}
\end{equation*}
Similarly we get $\vert \alpha_i^2 \vert < \frac{\delta_t\lambda_1}{2\lambda d_{\min}^2}.$ For the term $\vert \alpha_i^3 \vert$ we have
\begin{equation*}
\begin{aligned}
\vert \alpha_i^3 \vert = & \left| \frac{\lambda_1}{4\lambda}\Upsilon_{1,i}^2 -  \frac{\lambda_2}{4\lambda}\Upsilon_{3,i}^2 \right|\\
\leq & \left(\frac{\lambda_1}{\lambda} \frac{\gamma_1}{\sigma_1^2} + 
\frac{\lambda_2}{\lambda} \frac{\gamma_3}{\sigma_3^2}\right)
< \frac{4}{\lambda d_{\min}^2},
\end{aligned}
\end{equation*}
where the first inequality follows from the triangle inequality and decreasing property of $\coth$ for positive argument as $\xi_j d_{\min} \leq \xi_{j} d_i$, and the second inequality follows from Lemma \ref{NNlemmamultiple}. Similarly we can show that $\vert \alpha_i^4 \vert < \frac{4\delta_t\lambda_1}{\lambda d_{\min}^2}$. We rewrite $\alpha_i^5$ using the identity $\gamma_{j,i}^2 = 1+\sigma_{j,i}^2 $ and get
\begin{equation*}
\begin{aligned}
\alpha_i^5  = &  1 -\frac{\lambda_1}{4\lambda}\left( \frac{\gamma_{1,i}^2}{\sigma_{1,i}^2} + 
\frac{\gamma_{1,i+1}^2}{\sigma_{1,i+1}^2} + 
 2\frac{\gamma_{1,i}\gamma_{1,i+1}}{\sigma_{1,i}\sigma_{1,i+1}} + \frac{1}{\sigma_{1,i}^2} + \frac{1}{\sigma_{1,i+1}^2}\right)
 \\
 & + 
\frac{\lambda_2}{4\lambda}\left( \frac{\gamma_{3,i}^2}{\sigma_{3,i}^2} + \frac{\gamma_{3,i+1}^2}{\sigma_{3,i+1}^2} + 
 2\frac{\gamma_{3,i}\gamma_{3,i+1}}{\sigma_{3,i}\sigma_{3,i+1}} + \frac{1}{\sigma_{3,i}^2} + \frac{1}{\sigma_{3,i+1}^2}\right) \\ 
 = & \frac{1}{2} - \frac{\lambda_1}{4\lambda}\left(  
 2\frac{\gamma_{1,i}\gamma_{1,i+1}}{\sigma_{1,i}\sigma_{1,i+1}} + \frac{2}{\sigma_{1,i}^2} + \frac{2}{\sigma_{1,i+1}^2}\right)
  \\
 & + 
\frac{\lambda_2}{4\lambda}\left(
 2\frac{\gamma_{3,i}\gamma_{3,i+1}}{\sigma_{3,i}\sigma_{3,i+1}} + \frac{2}{\sigma_{3,i}^2} + \frac{2}{\sigma_{3,i+1}^2}\right), 
\end{aligned}
\end{equation*}
This leads further,
\begin{equation*}
\begin{aligned}
\vert \alpha_i^5 \vert = & \left| \frac{1}{2} - \frac{\lambda_1}{4\lambda}\left(  
 2\frac{\gamma_{1,i}\gamma_{1,i+1}}{\sigma_{1,i}\sigma_{1,i+1}} + \frac{2}{\sigma_{1,i}^2} + \frac{2}{\sigma_{1,i+1}^2}\right) +
\frac{\lambda_2}{4\lambda}\left(
 2\frac{\gamma_{3,i}\gamma_{3,i+1}}{\sigma_{3,i}\sigma_{3,i+1}} + \frac{2}{\sigma_{3,i}^2} + \frac{2}{\sigma_{3,i+1}^2}\right) 
  \right|\\
= & 
\left| -\frac{1}{2} + \frac{\lambda_1}{4\lambda}\left(  
 2\frac{\gamma_{1,i}\gamma_{1,i+1}}{\sigma_{1,i}\sigma_{1,i+1}} + \frac{2}{\sigma_{1,i}^2} + \frac{2}{\sigma_{1,i+1}^2}\right) -
\frac{\lambda_2}{4\lambda}\left(
 2\frac{\gamma_{3,i}\gamma_{3,i+1}}{\sigma_{3,i}\sigma_{3,i+1}} + \frac{2}{\sigma_{3,i}^2} + \frac{2}{\sigma_{3,i+1}^2}\right) 
  \right|\\
 < &
\left| -\frac{1}{2} + \frac{\lambda_1}{4\lambda}\left(  
 2\frac{\gamma_{3,i}\gamma_{3,i+1}}{\sigma_{3,i}\sigma_{3,i+1}} + \frac{2}{\sigma_{1,i}^2} + \frac{2}{\sigma_{1,i+1}^2}\right) -
\frac{\lambda_2}{4\lambda}\left(
 2\frac{\gamma_{3,i}\gamma_{3,i+1}}{\sigma_{3,i}\sigma_{3,i+1}} + \frac{2}{\sigma_{3,i}^2} + \frac{2}{\sigma_{3,i+1}^2}\right) 
  \right| \\
\leq &
\left|\left(  -\frac{1}{2} + \frac{1}{2} \frac{\gamma_{3,i}\gamma_{3,i+1}}{\sigma_{3,i}\sigma_{3,i+1}} \right) +  \frac{\lambda_1}{\lambda}\frac{1}{\sigma_{1}^2} + \frac{\lambda_2}{\lambda}\frac{1}{\sigma_{3}^2}\right|\\
< &
\left|\left(  -\frac{1}{2} + \frac{1}{2} \frac{\gamma_{1}^2}{\sigma_{1}^2} \right)\right| + \frac{\lambda_1}{\lambda}\frac{1}{\xi_1^2 d_{\min}^2} + \frac{\lambda_2}{\lambda}\frac{1}{\xi_{3}^2 d_{\min}^2}\\
< & 
\left(\frac{1}{2\sigma_1^2} + \frac{2}{\lambda d_{\min}^2} \right) <  \frac{1}{d_{\min}^2}\left(\frac{1}{2\lambda_1} + \frac{2}{\lambda}\right),
\end{aligned}
\end{equation*}
where in the first and third inequalities we have used decreasing property of $\coth$ for positive argument. Similarly we have $\vert\alpha_i^6\vert < \frac{\delta_t\lambda_1}{\lambda d_{\min}^2}$.  Similar to $\vert\alpha_i^3\vert, \vert\alpha_i^4\vert$ we have $\vert\alpha_i^7\vert < \frac{4}{\lambda d_{\min}^2}, \vert\alpha_i^8\vert < \frac{4\delta_t\lambda_1}{\lambda d_{\min}^2}$,  and similar to $\vert\alpha_i^1\vert, \vert\alpha_i^2\vert$ we have $\vert\alpha_i^9\vert < \frac{1}{2\lambda d_{\min}^2}, \vert\alpha_i^{10}\vert < \frac{\delta_t\lambda_1}{2\lambda d_{\min}^2}$.  
Therefore for $3 \leq i \leq N-3$ we obtain,
\begin{equation}\label{alphastar1dunequal}
\sum_{j=1}^{10} \vert \alpha_i^j \vert < \bar{\alpha} = \frac{1}{d_{\min}^2}\left(\frac{11}{\lambda} + \frac{11\delta_t\lambda_1}{\lambda} +\frac{1}{2\lambda_1} \right).
\end{equation} 
Similarly we obtain for $3 \leq i \leq N-3$, 
\begin{equation}\label{betastar1dunequal}
\sum_{j=1}^{10} \vert \beta_i^j \vert < \bar{\beta} = \frac{1}{d_{\min}^2}\left(\frac{11}{\delta_t\lambda\lambda_2} + \frac{11\lambda_1}{\lambda\lambda_2} +\frac{1}{2\lambda_1} \right).
\end{equation}
Now if we take 
\begin{equation}\label{dstar1dunequal}
d_{\min} > \bar{d} = \max \left\{\sqrt{\frac{11}{\lambda} + \frac{10\delta_t\lambda_1}{\lambda} +\frac{1}{2\lambda_1}}, \sqrt{\frac{11}{\delta_t\lambda\lambda_2} + \frac{11\lambda_1}{\lambda\lambda_2} +\frac{1}{2\lambda_1}} \right\},
\end{equation}
 then the $\parallel \mathbb{T} \parallel_{\infty}$ is strictly less than one. It is easy to show that same bounds also hold for $i =1, 2, N-2, N-1$. This proves the convergence. \\
By following the 2nd part of the proof of Theorem \ref{NNequalthm}, we get similar estimates with the constants $\bar{\alpha}, \bar{\beta}$. 
\end{proof}

\subsection{NN for multiple subdomain in 2D}
We now analyse the NN algorithm \eqref{MNCH} - \eqref{MNNS} for the two-dimensional CH equation. We decompose the domain of our interest $\Omega = (0, L)\times (0, l)$ into strips of the form $\Omega_i:=(x_{i-1}, x_i)\times(0,l)$ for $i=1,\ldots,N$, with subdomain width $d_{i}:=x_{i}-x_{i-1}$. We perform a Fourier sine transform along $y$-direction to reduce the original problem into a collection of one-dimensional problems. Expanding the solution $u_i^{[k]}, v_i^{[k]}, \phi_{i}^{[k]}, \psi_{i}^{[k]}$ for $i=1,\ldots,N$ in a Fourier sine series along the $y$-direction yields
\[
\begin{array}{cc}
u_{i}^{[k]}(x,y)=\sum_{m\geq1}\hat u_{i}^{[k]}(x,m)\sin(\frac{m\pi y}{l}),\; &
v_{i}^{[k]}(x,y)=\sum_{m\geq1}\hat v_{i}^{[k]}(x,m)\sin(\frac{m\pi y}{l}),\\
\phi_{i}^{[k]}(x,y)=\sum_{m\geq1}\hat \phi_{i}^{[k]}(x,m)\sin(\frac{m\pi y}{l}),\;&
\psi_{i}^{[k]}(x,y)=\sum_{m\geq1}\hat \psi_{i}^{[k]}(x,m)\sin(\frac{m\pi y}{l}).
\end{array}
\]
After a Fourier sine transform, the NN algorithm \eqref{MNCH} - \eqref{MNNS} for the error equation in 2D becomes 
\begin{flalign*}
  & \begin{aligned} & \begin{cases}
\hat A \hat E_i^{[k]} & = 0, \quad \quad \text{in} \,\ \Omega_i, \\
\hat E_i^{[k]} & = \begin{bmatrix} 
\hat g_{i-1}^{[k-1]} \\
\hat h_{i-1}^{[k-1]} 
\end{bmatrix}, \quad \text{on} \,\ \Gamma_{i-1}, \\
\hat E_i^{[k]} & = \begin{bmatrix} 
\hat g_i^{[k-1]} \\
\hat h_i^{[k-1]}
\end{bmatrix}, \quad \text{on} \,\ \Gamma_{i}, \\
\end{cases}\\
  \MoveEqLeft[2]\text{}
  \end{aligned}
  & &
  \begin{aligned}
      & \begin{cases}
\hat A \hat F_i^{[k]} & = 0, \quad \quad \text{in} \,\ \Omega_i \\
\frac{\partial}{\partial x}\ \hat F_i^{[k]} & = \frac{\partial}{\partial x}\big[\hat E_{i-1}^{[k]} - \hat E_i^{[k]}\big], \quad \text{on} \,\ \Gamma_{i-1}, \\
\frac{\partial}{\partial x}\ \hat F_i^{[k]} & = \frac{\partial}{\partial x}\big[\hat E_{i}^{[k]} - \hat E_{i+1}^{[k]}\big], \quad \text{on} \,\ \Gamma_i ,\\
 \end{cases} \\
    \MoveEqLeft[-5]\text{}
  \end{aligned}
\end{flalign*}
except for the first and last subdomains, which are handled differently as in 1D case. The interface values for the next step are then updated as 
\[ 
 \begin{bmatrix} 
\hat g_i^{[k]} \\
\hat h_i^{[k]} 
\end{bmatrix} 
= 
\begin{bmatrix} 
\hat g_i^{[k-1]} \\
\hat h_i^{[k-1]} 
\end{bmatrix}
-
\theta
\big[\hat F_{i}^{[k]} - \hat F_{i+1}^{[k]}\big]_{\big|_{\Gamma_i}},
\]
where
{\footnotesize
\[
\hat A = 
\begin{bmatrix} 
1 & -\delta_t(\frac{d^2}{dx^2} - p_m^2) \\
\epsilon^2(\frac{d^2}{dx^2}-p_m^2) - c^2 & 1 
\end{bmatrix},\; \text{with}\; p_m^2=\frac{\pi^2 m^2}{l^2} \;
 \text{and}\;
\hat E_i^{[k]} = \begin{bmatrix} 
\hat u_i^{[k]}(x) \\
\hat v_i^{[k]}(x)
\end{bmatrix},
\hat F_i^{[k]} = \begin{bmatrix} 
\hat \phi_i^{[k]}(x) \\
\hat \psi_i^{[k]}(x)
\end{bmatrix}.
\]
\par}
With similar argument as in 1D case, we get the recurrence relation in 2D as: 
\begin{equation}\label{iternmatrix2dmul}
 \begin{bmatrix} 
\hat{\textbf{g}}^{[k]} \\
\hat{\textbf{ h}}^{[k]}
\end{bmatrix} 
=  \hat{\mathbb{T}}
\begin{bmatrix} 
\hat{\textbf{ g}}^{[k-1]} \\
\hat{\textbf{ h}}^{[k-1]} 
\end{bmatrix}
\end{equation}
where $
 \begin{bmatrix} 
\hat{\textbf{g}}^{[k]} & \hat{\textbf{ h}}^{[k]}
\end{bmatrix}^\mathsf{T} := [\hat g_1^{[k]}, \hat h_1^{[k]}, \hat g_2^{[k]}, \hat h_2^{[k]}, ..., \hat g_{N-1}^{[k]}, \hat h_{N-1}^{[k]}]^\mathsf{T} $
and the iteration matrix $ \hat{\mathbb{T}} \in\mathbb{R}^{2N-2\times 2N-2}$ has the same form as in \eqref{Iterationmatrix}, except that the elements have Fourier symbol in it, as  $\xi_{1,2}, \xi_{3,4}$ are modified as 
\[ 
\xi_{1,2} = \pm\sqrt{\lambda_1 + p_m^2},\; 
\xi_{3,4} = \pm\sqrt{\lambda_2 + p_m^2},  
\]
where $\lambda_{1,2}$ are exactly as defined earlier. Note that $\xi_i$'s are function of Fourier variable $m$ for $i=1,\cdots,4$.   
We now prove the convergence result for NN method in 2D for multiple subdomain for $\delta_t > \frac{4\epsilon^2}{c^4}$.
\begin{theorem}[Convergence of NN in 2D]\label{NNequal2dthm}
For $\theta=1/4$, the NN \eqref{MNCH}-\eqref{MNNS} algorithm with equal subdomain width $d$, satisfying $d > d_e^*$, is convergent. Moreover, we have the following estimates
\[
\max_{1 \leq i \leq N-1} \parallel g_i^{[k]} \parallel_{L^2(\Gamma_i)} 
<  (\sqrt{\alpha_e^*})^k \max_{1 \leq i \leq N-1}\left\{\parallel g_i^{[0]} \parallel_{L^2(\Gamma_i)}, \parallel h_i^{[0]} \parallel_{L^2(\Gamma_i)}\right\},
\]
\[
\max_{1 \leq i \leq N-1}\parallel h_i^{[k]} \parallel_{L^2(\Gamma_i)} 
<  (\sqrt{\beta_e^*})^k \max_{1 \leq i \leq N-1}\left\{\parallel g_i^{[0]} \parallel_{L^2(\Gamma_i)}, \parallel h_i^{[0]} \parallel_{L^2(\Gamma_i)}\right\},
\]
where the expression of $d_e^*, \alpha_e^*, \beta_e^*$ are given in \eqref{dstar2dequal}, \eqref{alphastar2dequal}, \eqref{betastar2dequal} respectively.
\end{theorem}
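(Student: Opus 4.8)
The plan is to treat this two-dimensional multi-subdomain case exactly as Theorem \ref{NLD2d} treats the two-dimensional two-subdomain case relative to Theorems \ref{NLD1d}--\ref{DLN}: perform a Fourier sine transform in $y$ to reduce the problem to a one-parameter family of one-dimensional iterations, estimate the resulting iteration matrices $\hat{\mathbb{T}}(m)$ uniformly in the Fourier index $m$ by re-using the one-dimensional work of Theorem \ref{NNequalthm}, and then pass back to physical space via the Parseval--Plancherel identity.

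First I would record that, after the transform, the recurrence \eqref{iternmatrix2dmul} holds with an iteration matrix $\hat{\mathbb{T}}=\hat{\mathbb{T}}(m)$ whose block structure is \emph{identical} to \eqref{Iterationmatrix}, the only change being that $\xi_1,\xi_3$ are replaced by $\sqrt{\lambda_1+p_m^2},\sqrt{\lambda_2+p_m^2}$ with $p_m^2=\pi^2 m^2/l^2$. For equal subdomain width one again has $\sigma_{j,i}=\sigma_j=\sinh(\xi_j d)$, $\gamma_{j,i}=\gamma_j=\cosh(\xi_j d)$, so every entry $\hat\alpha_i^{\,l}(m),\hat\beta_i^{\,l}(m)$ is the corresponding $\alpha_i^{\,l},\beta_i^{\,l}$ from the proof of Theorem \ref{NNequalthm} read off verbatim. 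I would then re-run those entry-by-entry estimates for an interior row $3\le i\le N-3$ (and, separately, for the boundary rows $i=1,2,N-2,N-1$), using the same three tools as before — the elementary bound $\sigma_j>\xi_j d$, the identity $\gamma_j^2=1+\sigma_j^2$ (which is what makes the ``$1$'' in $\alpha_i^5$ cancel, via $1-(\lambda_1-\lambda_2)/\lambda=0$, a cancellation that is insensitive to the value of $\xi_j$), and Lemma \ref{NNlemmamultiple} — together with the one new observation that $\lambda_j/\xi_j^2=\lambda_j/(\lambda_j+p_m^2)\le 1$ for every $m\ge 1$. Because $p_m^2\ge\pi^2/l^2>0$, each of these bounds is no larger than its one-dimensional counterpart, so the row sums satisfy $\sum_l|\hat\alpha_i^{\,l}(m)|<\alpha_e^*$ and $\sum_l|\hat\beta_i^{\,l}(m)|<\beta_e^*$ \emph{uniformly in} $m$, where $\alpha_e^*,\beta_e^*$ have the same shape as $\alpha^*,\beta^*$ in \eqref{alphastar1dequal}--\eqref{betastar1dequal} (with $\lambda_1,\lambda_2$ shifted by the smallest frequency $p_1^2=\pi^2/l^2$, which is the worst case). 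Consequently, taking $d>d_e^*$ — the analogue of \eqref{dstar1dequal}, stated in \eqref{dstar2dequal} and in fact no larger than the 1D threshold — gives $\|\hat{\mathbb{T}}(m)\|_\infty<1$ for all $m\ge 1$.

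Finally I would transfer this to physical space exactly as in the proof of Theorem \ref{NLD2d}\ref{item:2:2d}: for each fixed $m$ the scalar inequality $|\hat g_i^{[k]}(m)|\le\big(\sum_l|\hat\alpha_i^{\,l}(m)|\big)\max_{i'}\{|\hat g_{i'}^{[k-1]}(m)|,|\hat h_{i'}^{[k-1]}(m)|\}$ (and its $\hat h$-counterpart) propagates exactly as in Theorem \ref{NNequalthm}; squaring, summing over $m$, and invoking the Parseval--Plancherel identity then turns these pointwise-in-$m$ bounds into the claimed $L^2(\Gamma_i)$ estimates, the rates $\sqrt{\alpha_e^*},\sqrt{\beta_e^*}$ of \eqref{alphastar2dequal}--\eqref{betastar2dequal} being what the $\ell^2$-in-$m$ summation of the squared per-mode bounds produces.

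\textbf{Main obstacle.} There is no new conceptual difficulty: the whole content is bookkeeping. The only thing that really must be checked is that every one of the up-to-ten entries per interior row, and of the fewer entries in the four boundary rows, is genuinely monotone in $\xi_j$ (through $\sigma_j,\gamma_j$) so that the supremum over $m\ge1$ is attained at $m=1$ and the threshold $d_e^*$ and constants $\alpha_e^*,\beta_e^*$ are well defined; handling the boundary rows is the most error-prone part, just as it was in Theorem \ref{NNequalthm}.
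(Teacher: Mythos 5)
Your overall strategy coincides with the paper's: a Fourier sine transform in $y$, the same iteration matrix \eqref{Iterationmatrix} with $\xi_{1},\xi_{3}$ replaced by $\sqrt{\lambda_{1}+p_m^2},\sqrt{\lambda_{2}+p_m^2}$, entry-by-entry bounds recycled from Theorem \ref{NNequalthm}, reduction to the worst mode $m=1$ by monotonicity, and Parseval--Plancherel to return to $L^2(\Gamma_i)$. One cosmetic divergence: the paper does not revert to the $1/d^2$ bounds of \eqref{alphastar1dequal}--\eqref{dstar1dequal} via $\sigma_j>\xi_j d$ as you propose; it keeps every entry bounded by a constant times $1/\sigma_3^2$, uses that $m\mapsto\sigma_3(p_m)$ is increasing to evaluate at $m=1$, and then defines $d_e^*$ in \eqref{dstar2dequal} through $\sinh^{-1}$. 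Your route also yields $\parallel\hat{\mathbb{T}}(m)\parallel_\infty<1$ uniformly in $m$ for $d$ large enough, but it produces a different threshold and different constants from the ones the theorem actually cites.

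The one genuine gap is in your final step. Squaring the per-mode inequality $\vert\hat g_i^{[k]}(m)\vert\le\rho(m)\max_{i'}\{\vert\hat g_{i'}(m)\vert,\vert\hat h_{i'}(m)\vert\}$ and summing over $m$ does not deliver the stated estimate, because $\sum_m\max_{i'}a_{i'}(m)^2$ cannot be replaced by $\max_{i'}\sum_m a_{i'}(m)^2$; doing it crudely costs a factor $2(N-1)$, i.e.\ an $N$-dependent prefactor that the clean statement $(\sqrt{\alpha_e^*})^k$ does not allow. The paper avoids this by never collapsing a row into a row-sum-times-max at the mode level: it keeps $\hat g_i^{[k]}(m)$ as the explicit ten-term linear combination \eqref{gikm}, factors each coefficient as $\sqrt{C_1}$ times a numerical weight ($\tfrac12,4,3,\dots$), applies the triangle (Minkowski) inequality in $l_2$ to the ten sequences $\Lambda_{i'}^{[k-1]},\Xi_{i'}^{[k-1]}$ separately, and only then bounds each $l_2$-norm by the maximum over subdomains; the weights sum to $24$, which is exactly where $\alpha_e^*=576C_1$ in \eqref{alphastar2dequal} and $\beta_e^*=576C_2$ in \eqref{betastar2dequal} come from. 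The repair is routine, but it is precisely the bookkeeping your sketch waves away, and without it the displayed rates with no prefactor do not follow.
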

\begin{proof}
For equal-spaced subdomains we have $d_i = d$ for $i=1,\ldots,N$, so that $\sigma_{j,i} = \sigma_{j}, \gamma_{j, i}=\gamma_{j}$ for all $i$ and for $j = 1, 3$. Consider the infinity-norm of $\hat{\mathbb{T}}$ given by
\begin{equation}\label{normh2d}
\parallel \hat{\mathbb{T}} \parallel_{\infty} = \max_{1 \leq i \leq N-1} \left\{ \sum_{j=1}^{S_i} \vert \alpha_i^j(m) \vert, \sum_{j=1}^{S_i} \vert \beta_i^j(m) \vert \right\}, 
\end{equation}
where $S_i$'s are defined earlier. 
We follow the proof of Theorem \ref{NNequalthm} to estimate $\sum_{j=1}^{10} \vert \alpha_i^j(m)\vert$
for $3 \leq i \leq N-3$,  and get
\[
\vert \alpha_i^1(m)\vert < \frac{\lambda_1}{2\lambda}\frac{1}{\sigma_3^2}, \; 
\vert \alpha_i^2(m)\vert < \frac{\delta_t\lambda_1\lambda_2}{2\lambda}\frac{1}{\sigma_3^2}, \; 
\vert \alpha_i^3(m)\vert < \frac{\lambda_1}{\lambda}\frac{4}{\sigma_3^2}, \; 
\] 
\[
\vert \alpha_i^4(m)\vert < \frac{\delta_t\lambda_1\lambda_2}{\lambda}\frac{4}{\sigma_3^2}, \; 
\vert \alpha_i^5(m)\vert < \frac{\lambda_1}{\lambda}\frac{3}{\sigma_3^2}, \; 
\vert \alpha_i^6(m)\vert < \frac{\delta_t\lambda_1\lambda_2}{\lambda}\frac{3}{\sigma_3^2}.
\]
As $m\mapsto {\sigma_3(p_m)}$ is monotonically increasing, we have the estimate of $\sum_{j=1}^{10} \vert \alpha_i^j(m) \vert$ for $3 \leq i \leq N-3$, as 
\[ 
\sum_{j=1}^{10} \vert \alpha_i^j(m) \vert <  \frac{12}{\sigma_3^2(p_1)}\left( \frac{\lambda_1}{\lambda} +  \frac{\delta_t\lambda_1\lambda_2}{\lambda}\right) = \frac{c_{\alpha}}{\sigma_3^2(p_1)}.
\]
Similarly we obtain the estimate of $\sum_{j=1}^{10} \vert \beta_i^j(m) \vert$ for $3 \leq i \leq N-3$, as 
\[ 
\sum_{j=1}^{10} \vert \beta_i^j(m) \vert < \frac{12}{\sigma_3^2(p_1)}\left( \frac{\lambda_1}{\lambda} +  \frac{1}{\delta_t\lambda}\right)= \frac{c_{\beta}}{\sigma_3^2(p_1)}.
\]
Now if we take 
\begin{equation}\label{dstar2dequal}
d > d_e^* = \max \Big\{\sinh^{-1}(\sqrt{c_{\alpha}}),  \sinh^{-1}(\sqrt{c_{\beta}}) \Big\} \frac{1}{\xi_3(p_1)},
\end{equation}
 then $\parallel \hat{\mathbb{T}} \parallel_{\infty}$ becomes strictly less than one. One can show that the same $d$ also works for estimating the remaining rows $i =1, 2, N-2, N-1$. This proves  the convergence. It remains now to get the estimates. 
For $3 \leq i \leq N-3$ from the iteration matrix \eqref{iternmatrix2dmul}, we have
\begin{equation}\label{gikm}
\begin{aligned}
\hat g_i^{[k]}(m) = &\alpha_i^1(m) \hat g_{i-2}^{[k-1]}(m) + \alpha_i^2(m) \hat h_{i-2}^{[k-1]}(m) + \alpha_i^3(m) \hat g_{i-1}^{[k-1]}(m) \\
 & +\alpha_i^4(m) \hat h_{i-1}^{[k-1]}(m) +\alpha_i^5(m) \hat g_{i}^{[k-1]}(m) +\alpha_i^6(m) \hat h_{i}^{[k-1]}(m) \\
& + \alpha_i^7(m) \hat g_{i+1}^{[k-1]}(m) + \alpha_i^8(m) \hat h_{i+1}^{[k-1]}(m) + \alpha_i^9(m) \hat g_{i+2}^{[k-1]}(m) \\ 
& + \alpha_i^{10}(m) \hat h_{i+2}^{[k-1]}(m),
\end{aligned}
\end{equation}
for each $m \geq 1.$ We define $\Lambda_i^{[k]} = \{ \vert \hat g_i^{[k]}(m) \vert\}_{m\geq 1} \in l_2$ and $\Xi_i^{[k]} = \{ \vert \hat h_i^{[k]}(m) \vert\}_{m\geq 1} \in l_2$. Using Parseval-Plancherel identity we get
$\parallel g_i^{[k]} \parallel_{L^2}^2 = \frac{l}{2} \sum_{m=1}^{\infty} \vert \hat g_i^{[k]}(m) \vert ^2 = \frac{l}{2} \parallel\Lambda_i^{[k]} \parallel_2^2$ and $\parallel h_i^{[k]} \parallel_{L^2}^2 = \frac{l}{2} \sum_{m=1}^{\infty} \vert \hat h_i^{[k]}(m) \vert ^2 = \frac{l}{2} \parallel\Xi_i^{[k]} \parallel_2^2$. Using the above identities and the triangle inequality in $l_2$, we have for $i = 3,\cdots,N-3$
\begin{equation*}
\begin{aligned}
\parallel g_i^{[k]} \parallel_{L^2}^2 =  &  \frac{l}{2} \sum_{m=1}^{\infty} \Big( \alpha_i^1(m) \hat g_{i-2}^{[k-1]}(m) + \alpha_i^2(m) \hat h_{i-2}^{[k-1]}(m) + \alpha_i^3(m) \hat g_{i-1}^{[k-1]}(m) +\alpha_i^4(m) \hat h_{i-1}^{[k-1]}(m) \\
 &  +\alpha_i^5(m) \hat g_{i}^{[k-1]}(m) +\alpha_i^6(m) \hat h_{i}^{[k-1]}(m)\alpha_i^7(m) \hat g_{i+1}^{[k-1]}(m) + \alpha_i^8(m) \hat h_{i+1}^{[k-1]}(m)  \\
& +  \alpha_i^9(m) \hat g_{i+2}^{[k-1]}(m)+ \alpha_i^{10}(m) \hat h_{i+2}^{[k-1]}(m) \Big)^2 ,\\
\leq & C_1 \frac{l}{2}\sum_{m=1}^{\infty} \Big( \frac{1}{2}\vert\hat g_{i-2}^{[k-1]}(m)\vert + \frac{1}{2}\vert\hat h_{i-2}^{[k-1]}(m)\vert + 4\vert \hat g_{i-1}^{[k-1]}(m)\vert+ 4\vert\hat h_{i-1}^{[k-1]}(m)\vert+3\vert\hat g_{i}^{[k-1]}(m) \vert \\
 & + 3\vert\hat h_{i}^{[k-1]}(m)\vert+ 4\vert\hat g_{i+1}^{[k-1]}(m) \vert+ 4\vert \hat h_{i+1}^{[k-1]}(m)\vert +  \frac{1}{2}\vert\hat g_{i+2}^{[k-1]}(m)\vert + \frac{1}{2}\vert\hat h_{i+2}^{[k-1]}(m)\vert  \Big)^2 ,\\
< & C_1 \frac{l}{2} \Big(\parallel \frac{1}{2}\Lambda_{i-2}^{[k-1]} + \frac{1}{2}\Xi_{i-2}^{[k-1]} +  4\Lambda_{i-1}^{[k-1]} + 4\Xi_{i-1}^{[k-1]} + 3\Lambda_i^{[k-1]} + 3\Xi_i^{[k-1]} +4\Lambda_{i+1}^{[k-1]} \\
& + 4\Xi_{i+1}^{[k-1]} + \frac{1}{2}\Lambda_{i+2}^{[k-1]} + \frac{1}{2}\Xi_{i+2}^{[k-1]} \parallel_2 ^2\Big),\\
\leq & C_1 \frac{l}{2} \Big(\frac{1}{2}\parallel \Lambda_{i-2}^{[k-1]} \parallel_2 + \frac{1}{2}\parallel\Xi_{i-2}^{[k-1]} \parallel_2+  4\parallel\Lambda_{i-1}^{[k-1]}\parallel_2 + 4\parallel\Xi_{i-1}^{[k-1]}\parallel_2 + 3\parallel\Lambda_i^{[k-1]} \parallel_2  \\
 & + 3\parallel\Xi_i^{[k-1]} \parallel_2 +4\parallel\Lambda_{i+1}^{[k-1]} \parallel_2+ 4\parallel\Xi_{i+1}^{[k-1]}\parallel_2 + \frac{1}{2}\parallel\Lambda_{i+2}^{[k-1]} \parallel_2+ \frac{1}{2}\parallel\Xi_{i+2}^{[k-1]} \parallel_2 \Big)^2 ,\\
 \leq & \alpha_e^* \max_{1 \leq i \leq N-1} \left\{\parallel g_i^{[k-1]} \parallel_{L^2}^2, \parallel h_i^{[k-1]} \parallel_{L^2}^2 \right\}
\end{aligned}
\end{equation*}
where 
\begin{equation}\label{alphastar2dequal}
\alpha_e^*=576C_1
\end{equation}
 with $C_1 =\frac{5}{\sigma_3^4(p_1)}\left[\left( \frac{\lambda_1}{\lambda}\right)^2 +  \left(\frac{\delta_t\lambda_1\lambda_2}{\lambda}\right)^2\right]$. Similarly we obtain  for $3 \leq i \leq N-3$
\[ 
\parallel h_i^{[k]} \parallel_{L^2}^2 
<  \beta_e^* \max_{1 \leq i \leq N-1}\left\{\parallel g_i^{[k-1]} \parallel_{L^2}^2, \parallel h_i^{[k-1]} \parallel_{L^2}^2\right\},
\]
where 
\begin{equation}\label{betastar2dequal}
\beta_e^* = 576C_2
\end{equation}
 with $C_2 = \frac{5}{\sigma_3^4(p_1)}\left[\left( \frac{\lambda_1}{\lambda}\right)^2 +  \left(\frac{1}{\delta_t\lambda}\right)^2\right]$. The same estimate also holds for the remaining subdomains $i =1, 2, N-2, N-1$, and hence we get the result. 
\end{proof}
\begin{theorem}[Convergence of NN for unequal subdomain in 2D]\label{NNunequal2dthm}
For $\theta=1/4$, the NN algorithm \eqref{MNCH}-\eqref{MNNS} for multiple subdomains with unequal subdomain width, satisfying $d_{\min} > d_u^*$, is convergent. Moreover, we have the following estimates
\[
\max_{1 \leq i \leq N-1} \parallel g_i^{[k]} \parallel_{L^2(\Gamma_i)} 
<  (\sqrt{\alpha_u^*})^k \max_{1 \leq i \leq N-1}\left\{\parallel g_i^{[0]} \parallel_{L^2(\Gamma_i)}, \parallel h_i^{[0]} \parallel_{L^2(\Gamma_i)}\right\},
\]
\[
\max_{1 \leq i \leq N-1}\parallel h_i^{[k]} \parallel_{L^2(\Gamma_i)} 
<  (\sqrt{\beta_u^*})^k \max_{1 \leq i \leq N-1}\left\{\parallel g_i^{[0]} \parallel_{L^2(\Gamma_i)}, \parallel h_i^{[0]} \parallel_{L^2(\Gamma_i)}\right\},
\]
where the expression of $d_u^*, \alpha_u^*, \beta_u^*$ are given in \eqref{dstar2dunequal}, \eqref{alphabetastar2dunequal} respectively.
\end{theorem}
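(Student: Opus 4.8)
The plan is to combine the two arguments already in place: the Fourier-reduction scheme used for Theorem \ref{NNequal2dthm} and the unequal-length row-sum estimates used for Theorem \ref{NNunequalthm}. As before, a Fourier sine transform in the $y$-direction decouples the two-dimensional error problem into the family of one-dimensional problems indexed by $m\ge 1$, with modified symbols $\xi_{1,2}=\pm\sqrt{\lambda_1+p_m^2}$ and $\xi_{3,4}=\pm\sqrt{\lambda_2+p_m^2}$; since $\delta_t>\tfrac{4\epsilon^2}{c^4}$, the numbers $\lambda_{1,2}$, and hence $\xi_{1,3}(m)$, are real and positive for every $m$. The transformed iteration matrix $\hat{\mathbb{T}}$ keeps the banded structure \eqref{Iterationmatrix}, with entries $\alpha_i^j(m),\beta_i^j(m)$ obtained from those of $\mathbb{T}$ by replacing $\xi_{1,3}$ with $\xi_{1,3}(m)$, so that $\parallel\hat{\mathbb{T}}\parallel_\infty=\max_{1\le i\le N-1}\{\sum_{j=1}^{S_i}|\alpha_i^j(m)|,\ \sum_{j=1}^{S_i}|\beta_i^j(m)|\}$ with the same $S_i$ as in Theorem \ref{NNequalthm}.

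First I would bound $\parallel\hat{\mathbb{T}}\parallel_\infty$ uniformly in $m$. Set $\sigma_j:=\sinh(\xi_j(p_1)\,d_{\min})$ and $\gamma_j:=\cosh(\xi_j(p_1)\,d_{\min})$ for $j=1,3$. For any mode $m$ and any subdomain index $i$ one has $\sigma_{j,i}=\sinh(\xi_j(p_m)d_i)\ge\sinh(\xi_j(p_m)d_{\min})\ge\sinh(\xi_j(p_1)d_{\min})=\sigma_j$, since $\sinh$ and $m\mapsto p_m$ are increasing; together with $\sinh(\xi_j d_{\min})>\xi_j d_{\min}$, the decreasing property of $\coth$ on $(0,\infty)$, Lemma \ref{NNlemmamultiple}, and the identity $\gamma_{j,i}^2=1+\sigma_{j,i}^2$, every estimate in the proof of Theorem \ref{NNunequalthm} carries over with $d_{\min}$ playing the role of the generic width and $\sigma_j,\gamma_j$ as just defined. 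This gives, for $3\le i\le N-3$, row sums $\sum_{j=1}^{10}|\alpha_i^j(m)|<\alpha_u^*$ and $\sum_{j=1}^{10}|\beta_i^j(m)|<\beta_u^*$ — and the analogous shorter sums for $i=1,2,N-2,N-1$ — where $\alpha_u^*,\beta_u^*$ depend only on $\lambda_{1,2}$, $\delta_t$ and $1/\sigma_3^2(p_1)$. Choosing $d_u^*$ large enough that $d_{\min}>d_u^*$ forces $\max\{\alpha_u^*,\beta_u^*\}<1$ then yields $\parallel\hat{\mathbb{T}}\parallel_\infty<1$, hence convergence.

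To upgrade the row-sum bounds to the stated $L^2$ estimates, I would run the $l_2$ argument of Theorem \ref{NNequal2dthm}: start from the mode-wise recurrence for $\hat g_i^{[k]}(m)$ analogous to \eqref{gikm}, set $\Lambda_i^{[k]}:=\{|\hat g_i^{[k]}(m)|\}_{m\ge1}$ and $\Xi_i^{[k]}:=\{|\hat h_i^{[k]}(m)|\}_{m\ge1}$, and use the Parseval--Plancherel identity $\parallel g_i^{[k]}\parallel_{L^2}^2=\tfrac{l}{2}\parallel\Lambda_i^{[k]}\parallel_2^2$. Applying the triangle inequality in $l_2$ and a Cauchy--Schwarz estimate to the ten-term sum, together with the uniform-in-$m$ coefficient bounds from the previous step, produces $\parallel g_i^{[k]}\parallel_{L^2}^2<\alpha_u^*\max_i\{\parallel g_i^{[k-1]}\parallel_{L^2}^2,\parallel h_i^{[k-1]}\parallel_{L^2}^2\}$ and the companion inequality with $\beta_u^*$; iterating in $k$ gives the claimed decay with ratios $\sqrt{\alpha_u^*}$ and $\sqrt{\beta_u^*}$, and the boundary rows $i=1,2,N-2,N-1$ are handled by the same computation. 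The main obstacle is the uniform-in-$m$ control of the diagonal entry $\alpha_i^5(m)$ (and its $\beta$ analogue), which contains the cross terms $\gamma_{j,i}\gamma_{j,i+1}/(\sigma_{j,i}\sigma_{j,i+1})$: these must first be collapsed to $\gamma_j^2/\sigma_j^2$ using the decreasing-$\coth$ bound, then rewritten via $\gamma_j^2=1+\sigma_j^2$, and only then can the $p_1$-monotonicity be applied; keeping the resulting $\alpha_u^*$ and $\beta_u^*$ in a transparent closed form, so that the threshold $d_u^*$ is explicit, is the delicate part of the bookkeeping.
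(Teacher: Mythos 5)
Your proposal follows essentially the same route as the paper: reduce via the Fourier sine transform, reuse the unequal-subdomain term-by-term bounds with $\sigma_j=\sinh(\xi_j d_{\min})$ kept in $\sinh$ form, invoke the monotonicity of $m\mapsto\sigma_3(p_m)$ to get a uniform-in-$m$ row-sum bound and hence $\parallel\hat{\mathbb{T}}\parallel_\infty<1$ for $d_{\min}>d_u^*$, and then run the Parseval/$l_2$ argument of the equal-width 2D theorem to obtain the $L^2$ contraction. The only slip is notational: in the paper the quantities forced below one by the choice of $d_u^*$ are the row sums $c_\alpha^*/\sigma_3^2(p_1)$ and $c_\beta^*/\sigma_3^2(p_1)$, while $\alpha_u^*=462.25\,C_1$ and $\beta_u^*=462.25\,C_2$ are the separate (squared, Cauchy--Schwarz-derived) constants appearing only in the $L^2$ estimates, so they should not be identified with the row-sum bounds as your second paragraph does.
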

\begin{proof}
For unequal subdomains we define $\sigma_j := \sinh(\xi_j d_{\min}), \gamma_j := \cosh(\xi_j d_{\min})$ for $j = 1, 3$, where $d_{\min} = \min_{1\leq i \leq N}d_i$. We are going to estimate $\sum_{j=1}^{10} \vert \alpha_i^j(m) \vert$ for $3 \leq i \leq N-3$. Similar to the proof of Theorem \ref{NNequalthm} we have the following estimates 
\[
\vert \alpha_i^1(m)\vert < \frac{\lambda_1}{2\lambda}\frac{1}{\sigma_3^2}, \; 
\vert \alpha_i^2(m)\vert < \frac{\delta_t\lambda_1\lambda_2}{2\lambda}\frac{1}{\sigma_3^2}, \; 
\vert \alpha_i^3(m)\vert < \frac{\lambda_1}{\lambda}\frac{4}{\sigma_3^2}, \; 
\] 
\[
\vert \alpha_i^4(m)\vert < \frac{\delta_t\lambda_1\lambda_2}{\lambda}\frac{4}{\sigma_3^2}, \; 
\vert \alpha_i^5(m)\vert < \left(\frac{2\lambda_1}{\lambda} + \frac{1}{2}\right)\frac{1}{\sigma_3^2}, \; 
\vert \alpha_i^6(m)\vert < \frac{\delta_t\lambda_1\lambda_2}{\lambda}\frac{1}{\sigma_3^2}.
\]
Likewise $\vert\alpha_i^3\vert, \vert\alpha_i^4\vert$ we have the estimates for $\vert\alpha_i^7\vert, \vert\alpha_i^8\vert$ as $\vert\alpha_i^7\vert < \frac{\lambda_1}{\lambda}\frac{4}{\sigma_3^2}, \vert\alpha_i^8\vert < \frac{\delta_t\lambda_1\lambda_2}{\lambda}\frac{4}{\sigma_3^2}$, and similar to $\vert\alpha_i^1\vert, \vert\alpha_i^2\vert$ we have $\vert\alpha_i^9\vert < \frac{\lambda_1}{2\lambda}\frac{1}{\sigma_3^2}, \vert\alpha_i^{10}\vert < \frac{\delta_t\lambda_1\lambda_2}{2\lambda}\frac{1}{\sigma_3^2}.$ 
We now have the estimate of $\sum_{j=1}^{10} \vert \alpha_i^j(m) \vert$ for $3 \leq i \leq N-3$, as 
\[ 
\sum_{j=1}^{10} \vert \alpha_i^j(m) \vert <  \frac{1}{\sigma_3^2(p_1)}\left( \frac{11\lambda_1}{\lambda} +  \frac{10\delta_t\lambda_1\lambda_2}{\lambda} + \frac{1}{2}\right) = \frac{c_{\alpha}^*}{\sigma_3^2(p_1)},
\]
using $\sigma_3(p_m)$ being an increasing function in $m$.
Similarly we obtain the estimate of $\sum_{j=1}^{10} \vert \beta_i^j(m) \vert$ for $3 \leq i \leq N-3$, as 
\[ 
\sum_{j=1}^{10} \vert \beta_i^j(m) \vert < \frac{1}{\sigma_3^2(p_1)}\left( \frac{11\lambda_1}{\lambda} +  \frac{10}{\delta_t\lambda} + \frac{1}{2}\right) = \frac{c_{\beta}^*}{\sigma_3^2(p_1)}.
\]
When
\begin{equation}\label{dstar2dunequal}
d_{\min} > d_u^* = \max \Big\{\sinh^{-1}(\sqrt{c_{\alpha}^*}),  \sinh^{-1}(\sqrt{c_{\beta}^*}) \Big\} \frac{1}{\xi_3(p_1)}, 
\end{equation}
$\parallel \hat{\mathbb{T}} \parallel_{\infty}$ becomes strictly less than one. The same estimates works for $i =1, 2, N-2, N-1$.
Similar to the second part of Theorem \ref{NNequal2dthm}, we get the estimate with 
\begin{equation}\label{alphabetastar2dunequal}
\alpha_u^* = 462.25C_1,\quad \beta_u^* = 462.25C_2, 
\end{equation}
 where $C_1, C_2$ are as defined earlier. This completes the proof.
\end{proof}
\begin{remark}
The convergence behaviour of NN method in multisubdomain setting for $\delta_t < \frac{4\epsilon^2}{c^4}$ can be proven in a similar way as in the case of $\delta_t > \frac{4\epsilon^2}{c^4}$ in 1D and 2D.
\end{remark}

\section{Numerical Illustration}\label{Section4}
In this section we present the numerical experiments for the DN \& NN algorithm for the CH equation \eqref{mixedCH}. We discretize the CH equation using the centered finite difference in space and the backward Euler in time with the linearization described in \eqref{linearCH}. The parameter $\epsilon $ is taken as $0.01$, except otherwise stated. The iterations start from a random initial guess and stop as the error $\parallel u-u^{[k]}\parallel_{L^{\infty}}$ in 1D and $\parallel u-u^{[k]}\parallel_{L^{2}}$ in 2D reaches a tolerance of $10^{-6}$, where $u$ is the discrete monodomain solution and $u^{[k]}$ is the discrete DN or NN solution at $k-$th iteration. The phase separation is rapid in time, and consequently small time steps should be taken. We then choose in this case $\delta_t=10^{-6}$, which results in the case $\delta_t< \frac{4\epsilon^2}{c^4}$. For the CH equation the phase coarsening stage is slow in time, and so one chooses relatively large time steps to reduce the total amount of computation. We choose $\delta_t = 10^{-3}$, which results in the case $\delta_t > \frac{4\epsilon^2}{c^4}$. In the following section we give numerical results by taking the above consideration.
\subsection{Numerics of DN \& NN method in 1D}
First we have given convergence results in terms of iteration count for DN method in equal subdomain case in Table \ref{dnequaltable_1d} by considering the domain $\Omega=(0, 1)$ and partitioned into $\Omega_1=(0, 1/2)$ and $\Omega_2 = (1/2, 1)$. 
For unequal subdomain: first we consider Neumann subdomain is larger than Dirichlet subdomain by choosing $a=1, b=2$, i.e., the domain $\Omega=(1, 2)$
and split it into $\Omega_1= (1, 1.4)$ and $\Omega_2=(1.4, 2)$, corresponding to Theorem \ref{NLD1d}, see Table \ref{dnunequal_1d_nld}.
And secondly we consider Dirichlet subdomain is larger than Neumann subdomain by choosing $a=1.5, b=1$, i.e., the domain $\Omega=(-1.5, 1)$
and split it into $\Omega_1= (-1.5, 0)$ and $\Omega_2=(0, 1)$, corresponding to Theorem \ref{DLN}, see Table \ref{dnunequal_1d_dln}. In Figure \ref{DNerrorNLD}, we compare theoretical error estimates given in Theorem \ref{NLD1d} with numerical error in the case of $b>a$. And in Figure \ref{DNerrorDLN}, we plot the theoretical estimates of error bound presented in Theorem \ref{DLN} to compare with numerical error in the case of $a>b$.
For experiment of NN method we have taken the spatial domain $\Omega=(0, 20)$. In Table \ref{NNmany_equal_1d} and \ref{NNmany_unequal_1d}, we compare the iteration number required for NN method to converge for the parameter $\theta=1/4$, by varying  number of subdomains ('sd'), mesh size $h$ and time steps $\delta_t$.
In Figure \ref{NNlongtimemultiple} we compare the numerical error behaviour of NN method with our theoretical error estimates presented in Theorem \ref{NNequalthm} for multiple subdomain of equal length. 

\begin{table}
\centering
\begin{tabular}{|p{.7cm}|p{1.2cm}|c|c|c|c|c|c|c|c|c|} \hline
$\delta_t$& \diagbox{$h$}{$\theta$} & 0.1 & 0.2 & 0.3 & 0.4 & 0.5 & 0.6 &0.7 &0.8&0.9\\ \hline\hline
$10^{-6}$ &
\begin{tabular}{c}  1/64 \\ 1/128\\ 1/256 \\1/512 \\  
\end{tabular} &
\begin{tabular}{c}  81 \\82\\84\\85\\
\end{tabular} &
\begin{tabular}{c}  36 \\36\\37\\37
\end{tabular} &
\begin{tabular}{c} 20\\20\\21\\21\\
\end{tabular}&
\begin{tabular}{c}  11\\12\\12\\12\\
\end{tabular} &
\begin{tabular}{c}  2\\2\\2\\2\\
\end{tabular} &
\begin{tabular}{c}  11\\12\\12\\12\\
\end{tabular} &
\begin{tabular}{c}  20\\20\\21\\21\\
\end{tabular} &
\begin{tabular}{c}  36\\36\\36\\36\\
\end{tabular} &
\begin{tabular}{c}  81\\82\\84\\85\\
\end{tabular} \\ \hline
$10^{-3}$ &
\begin{tabular}{c} 1/64 \\ 1/128\\ 1/256 \\1/512 \\
\end{tabular} &
\begin{tabular}{c}  84\\86\\88\\89\\
\end{tabular} &
\begin{tabular}{c}  37\\38\\38\\38\\
\end{tabular} &
\begin{tabular}{c}  22\\21\\22\\22\\
\end{tabular}&
\begin{tabular}{c}  13\\12\\13\\13\\
\end{tabular} &
\begin{tabular}{c}  2\\2\\2\\2\\
\end{tabular} &
\begin{tabular}{c}  13\\12\\13\\13\\
\end{tabular} &
\begin{tabular}{c}  21\\21\\22\\22\\
\end{tabular} &
\begin{tabular}{c}  37\\38\\38\\39\\
\end{tabular} &
\begin{tabular}{c}  84\\86\\88\\89\\
\end{tabular} \\ \hline
\end{tabular}
\caption{Number of iteration compared for DN method for two equal subdomain with short and large time step.}
\label{dnequaltable_1d}
\end{table}

\begin{table}
\centering
\begin{tabular}{|p{.7cm}|p{1.2cm}|c|c|c|c|c|c|c|c|c|} \hline
$\delta_t$& \backslashbox{$h$}{$\theta$} & 0.1 & 0.2 & 0.3 & 0.4 & 0.5 & 0.6 &0.7 &0.8&0.9\\ \hline\hline
$10^{-6}$ &
\begin{tabular}{c}  1/64 \\ 1/128\\ 1/256 \\1/512 \\  
\end{tabular} &
\begin{tabular}{c}  75\\76\\77\\79\\
\end{tabular} &
\begin{tabular}{c}  35 \\35\\36\\37
\end{tabular} &
\begin{tabular}{c} 20\\20\\20\\22\\
\end{tabular}&
\begin{tabular}{c}  12\\12\\12\\12\\
\end{tabular} &
\begin{tabular}{c}  2\\2\\2\\2\\
\end{tabular} &
\begin{tabular}{c}  12\\12\\12\\12\\
\end{tabular} &
\begin{tabular}{c}  20\\20\\20\\22\\
\end{tabular} &
\begin{tabular}{c}  37\\38\\39\\39\\
\end{tabular} &
\begin{tabular}{c}  84\\86\\87\\89\\
\end{tabular} \\ \hline
$10^{-3}$ &
\begin{tabular}{c} 1/64 \\ 1/128\\ 1/256 \\1/512 \\
\end{tabular} &
\begin{tabular}{c}  74\\82\\83\\84\\
\end{tabular} &
\begin{tabular}{c}  34\\39\\39\\39\\
\end{tabular} &
\begin{tabular}{c}  22\\22\\23\\23\\
\end{tabular}&
\begin{tabular}{c}  12\\13\\12\\13\\
\end{tabular} &
\begin{tabular}{c}  3\\3\\3\\5\\
\end{tabular} &
\begin{tabular}{c}  12\\13\\12\\13\\
\end{tabular} &
\begin{tabular}{c}  22\\22\\23\\23\\
\end{tabular} &
\begin{tabular}{c}  39\\45\\45\\44\\
\end{tabular} &
\begin{tabular}{c}  84\\87\\88\\91\\
\end{tabular} \\ \hline
\end{tabular}
\caption{Number of iteration compared of DN with Neumann subdomain larger than Dirichlet subdomain for short and large time step.}
\label{dnunequal_1d_nld}
\end{table}

\begin{table}
\centering
\begin{tabular}{|p{.7cm}|p{1.2cm}|c|c|c|c|c|c|c|c|c|} \hline
$\delta_t$& \backslashbox{$h$}{$\theta$} & 0.1 & 0.2 & 0.3 & 0.4 & 0.5 & 0.6 &0.7 &0.8&0.9\\ \hline\hline
$10^{-6}$ &
\begin{tabular}{c}  1/64 \\ 1/128\\ 1/256 \\1/512 \\  
\end{tabular} &
\begin{tabular}{c}  65\\66\\67\\67\\
\end{tabular} &
\begin{tabular}{c}  35 \\35\\36\\37
\end{tabular} &
\begin{tabular}{c} 20\\20\\20\\22\\
\end{tabular}&
\begin{tabular}{c}  12\\12\\12\\12\\
\end{tabular} &
\begin{tabular}{c}  2\\2\\2\\2\\
\end{tabular} &
\begin{tabular}{c}  12\\12\\12\\12\\
\end{tabular} &
\begin{tabular}{c}  20\\20\\20\\22\\
\end{tabular} &
\begin{tabular}{c}  37\\38\\39\\39\\
\end{tabular} &
\begin{tabular}{c}  66\\66\\67\\69\\
\end{tabular} \\ \hline
$10^{-3}$ &
\begin{tabular}{c} 1/64 \\ 1/128\\ 1/256 \\1/512 \\
\end{tabular} &
\begin{tabular}{c}  70\\70\\72\\73\\
\end{tabular} &
\begin{tabular}{c}  37\\37\\38\\39\\
\end{tabular} &
\begin{tabular}{c}  22\\22\\22\\23\\
\end{tabular}&
\begin{tabular}{c}  12\\13\\12\\13\\
\end{tabular} &
\begin{tabular}{c}  2\\2\\2\\2\\
\end{tabular} &
\begin{tabular}{c}  12\\13\\12\\13\\
\end{tabular} &
\begin{tabular}{c}  22\\22\\22\\23\\
\end{tabular} &
\begin{tabular}{c}  37\\59\\54\\47\\
\end{tabular} &
\begin{tabular}{c}  70\\70\\72\\73\\
\end{tabular} \\ \hline
\end{tabular}
\caption{Number of iteration compared for DN with Dirichlet subdomain larger than Neumann subdomain for short and large time step.}
\label{dnunequal_1d_dln}
\end{table}

\begin{figure}
    \centering
    \subfloat{{\includegraphics[width=5cm,height=3cm]{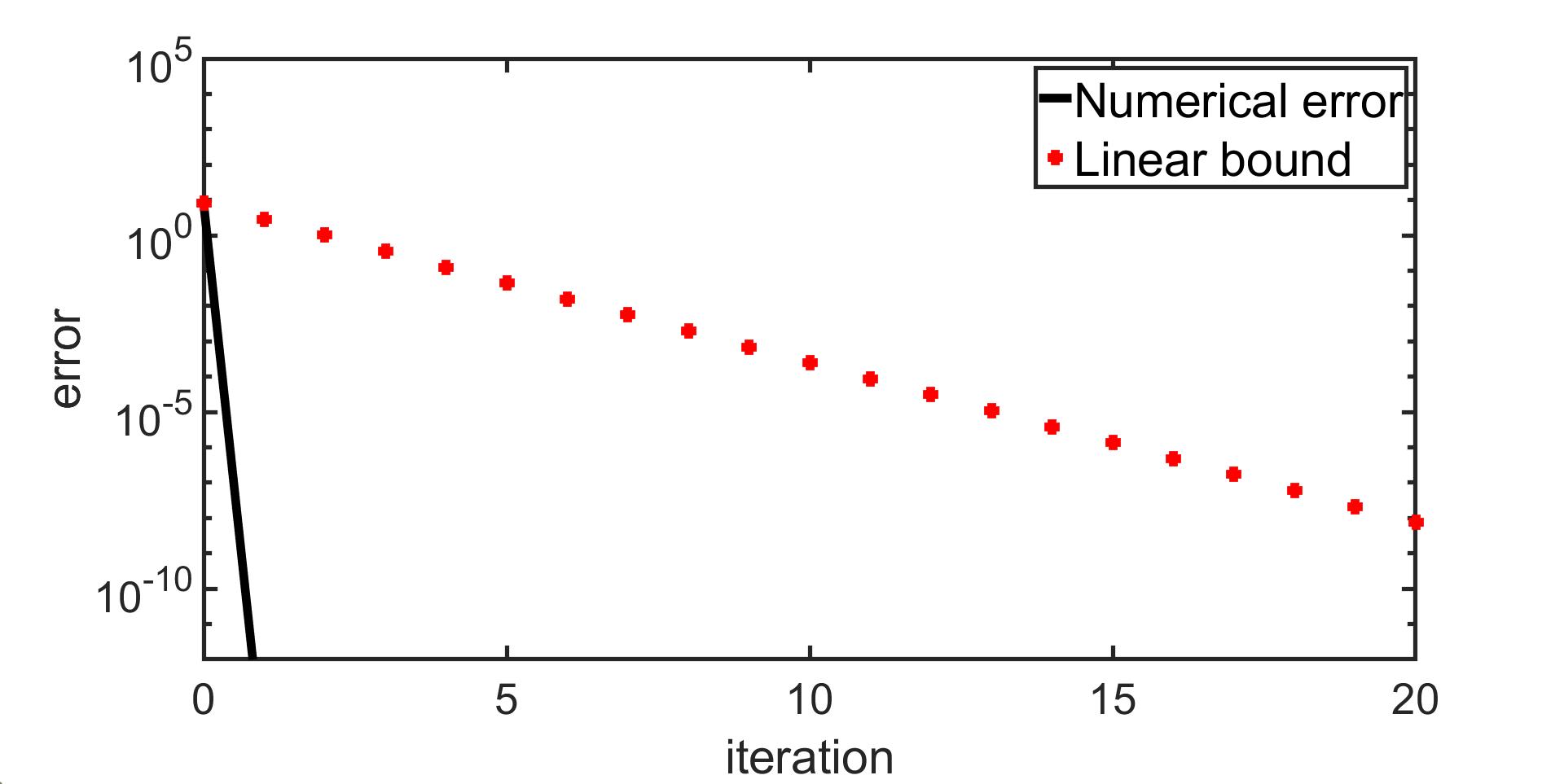} }}
    \quad
    \subfloat{{\includegraphics[width=5cm,height=3cm]{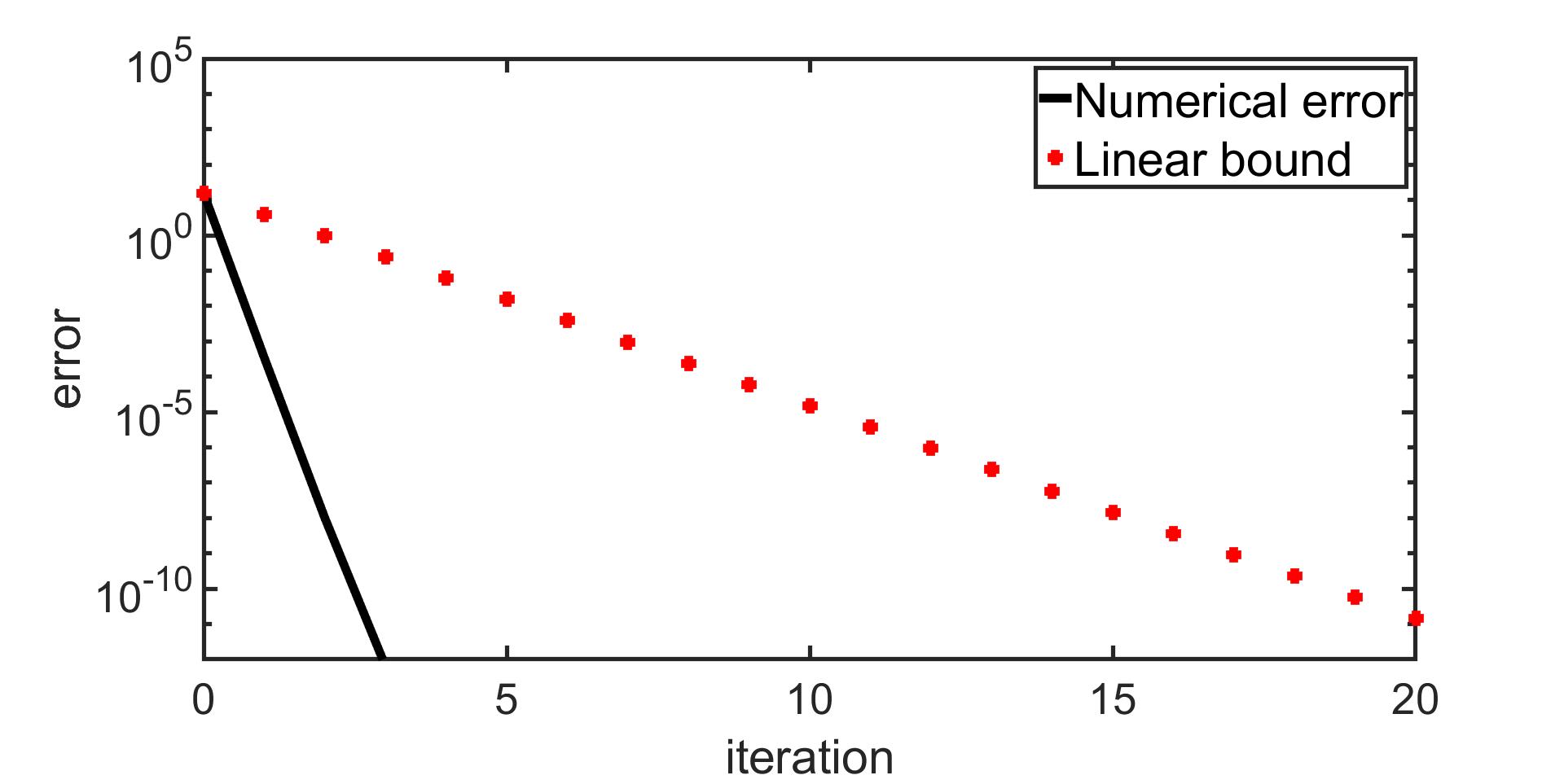} }}
    \caption{Comparison of the numerically measured error and the theoretical error estimates for DN for the mesh size $h=1/64$ and $\theta=1/2$ with Neumann subdomain larger than Dirichlet subdomain  for $\delta_t = 10^{-6}$(left), and $\delta_t = 10^{-3}$ (right).}
    \label{DNerrorNLD}
\end{figure}

\begin{figure}
    \centering
    \subfloat{{\includegraphics[width=5cm,height=3cm]{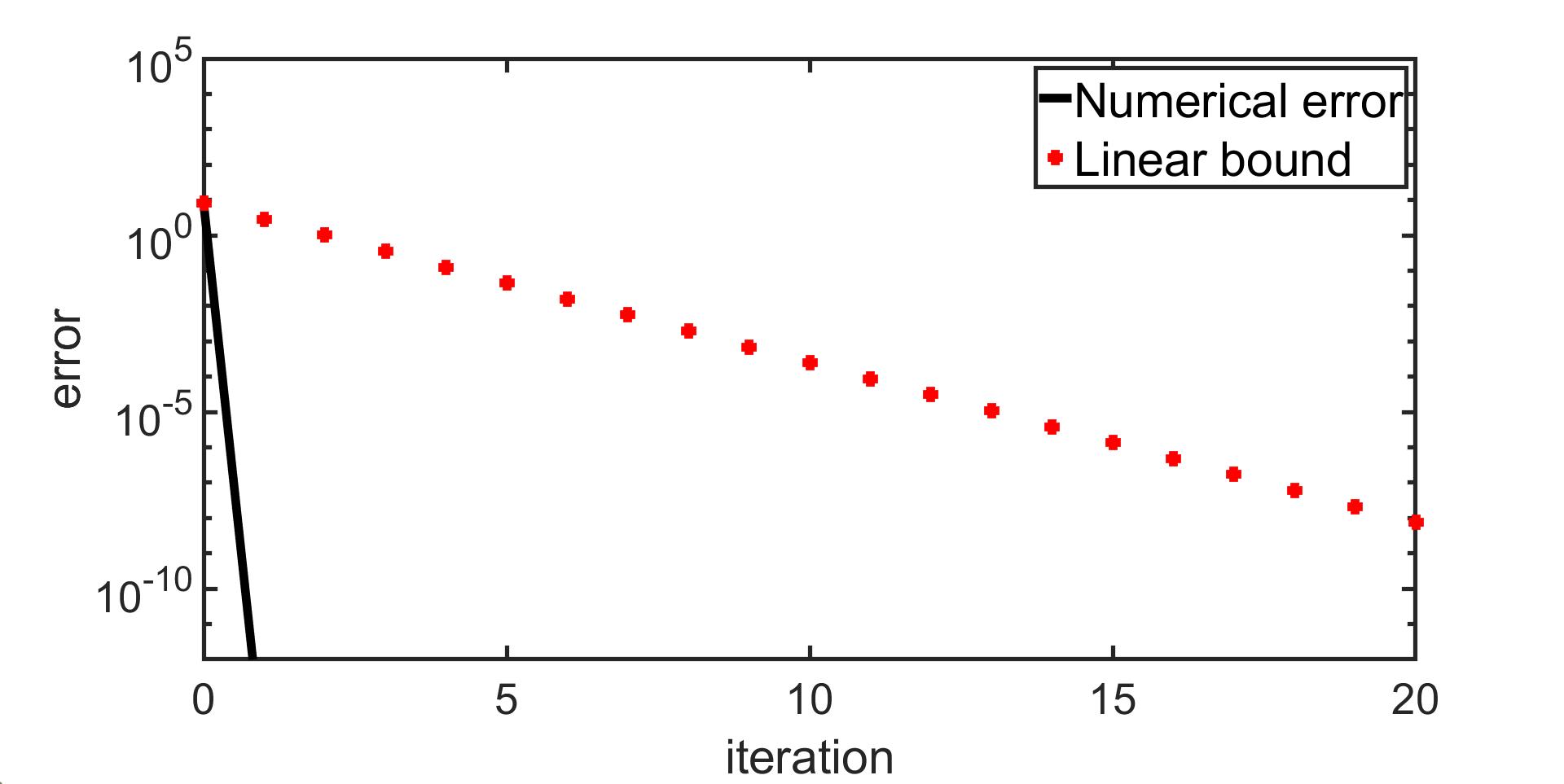} }}
    \quad
    \subfloat{{\includegraphics[width=5cm,height=3cm]{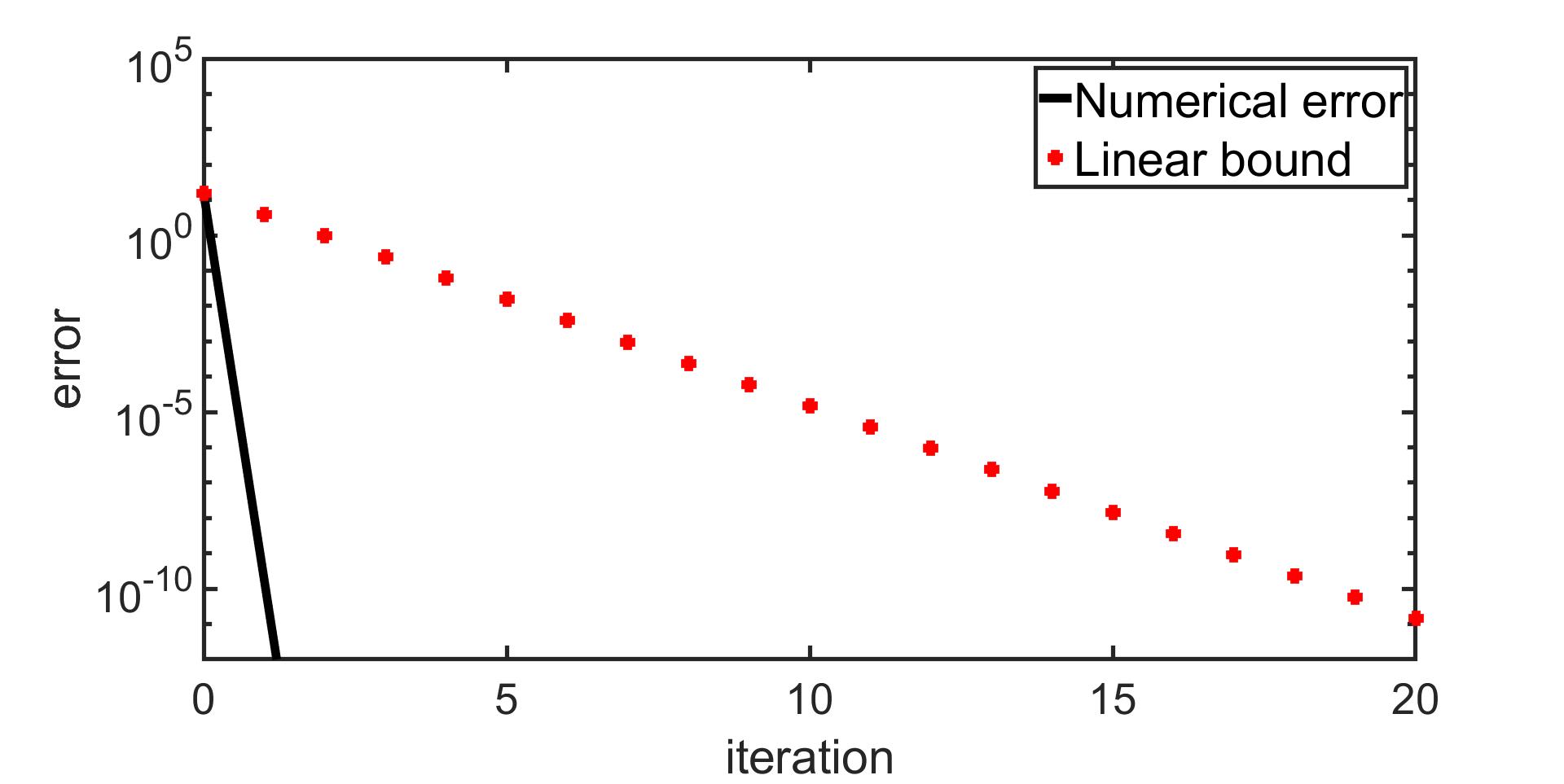} }}
    \caption{Comparison of the numerical error and the theoretical error estimates for DN for the mesh size $h=1/64$ and $\theta=1/2$ with Dirichlet subdomain larger than Neumann subdomain for $\delta_t = 10^{-6}$(left), and $\delta_t = 10^{-3}$ (right).}
    \label{DNerrorDLN}
\end{figure}

\begin{table}
    \begin{minipage}{.5\textwidth}
      \centering
      \begin{tabular}{|p{1.2cm}| p{.3cm}| p{.3cm}| p{.3cm}| p{.3cm}| p{.3cm}| p{.3cm}| }\hline
\diagbox{$h$}{ sd}&{2}&{4}&{8}&{16}&{32}&{64}\\
		\hline1/64&  2& 2&  2& 2&  2& 2\\
		\hline1/128& 2& 2&  2& 2&  2& 2\\
		\hline1/256& 2& 2&  2& 2&  2& 3\\
		\hline1/512& 2& 2&  2& 2&  3& 3\\
		\hline
	\end{tabular}
    \end{minipage}
    \begin{minipage}{.4\textwidth}
      \centering
      \begin{tabular}{|p{1.2cm}| p{.3cm}| p{.3cm}| p{.3cm}| p{.3cm}| p{.3cm}| p{.3cm}| }\hline
\diagbox{$h$}{sd}&{2}&{4}&{8}&{16}&{32}&{64}\\
		\hline1/64&  2& 2&  2& 2&  2& 2\\
		\hline1/128& 2& 2&  2& 2&  2& 3\\
		\hline1/256& 2& 2&  2& 2&  3& 5\\
		\hline1/512& 2& 2&  2& 3&  5& 10\\
		\hline
	\end{tabular}
    \end{minipage}
    \caption{Number of iteration compared of NN for many subdomains of equal length with $\delta_t=10^{-6} $ on the left Table and $\delta_t=10^{-3}$ on the right Table.}
\label{NNmany_equal_1d}
  \end{table}

\begin{table}
    \begin{minipage}{.5\textwidth}
      \centering
      \begin{tabular}{|p{1.2cm}| p{.3cm}| p{.3cm}| p{.3cm}| p{.3cm}| p{.3cm}| p{.3cm}| }\hline
\diagbox{$h$}{ sd}&{2}&{4}&{8}&{16}&{32}&{64}\\
		\hline1/64&  2& 2&  3& 3&  4& 6\\
		\hline1/128& 2& 2&  3& 4&  4& 6\\
		\hline1/256& 2& 2&  4& 4&  6& 8\\
		\hline1/512& 2& 2&  4& 4&  8& 10\\
		\hline
	\end{tabular}
    \end{minipage}
    \begin{minipage}{.4\textwidth}
      \centering
      \begin{tabular}{|p{1.2cm}| p{.3cm}| p{.3cm}| p{.3cm}| p{.3cm}| p{.3cm}| p{.3cm}| }\hline
\diagbox{$h$}{sd}&{2}&{4}&{8}&{16}&{32}&{64}\\
		\hline1/64&  2& 2&  3& 3&  4& 6\\
		\hline1/128& 2& 2&  3& 4&  4& 6\\
		\hline1/256& 2& 2&  4& 4&  6& 9\\
		\hline1/512& 2& 2&  4& 7&  9& 13\\
		\hline
	\end{tabular}
    \end{minipage}
    \caption{Number of iteration compared of NN for many subdomains of unequal length with $\delta_t=10^{-6} $ on the left Table and $\delta_t=10^{-3}$ on the right Table.}
\label{NNmany_unequal_1d}
  \end{table}

\begin{figure}
    \centering
    \subfloat{{\includegraphics[width=5cm,height=3cm]{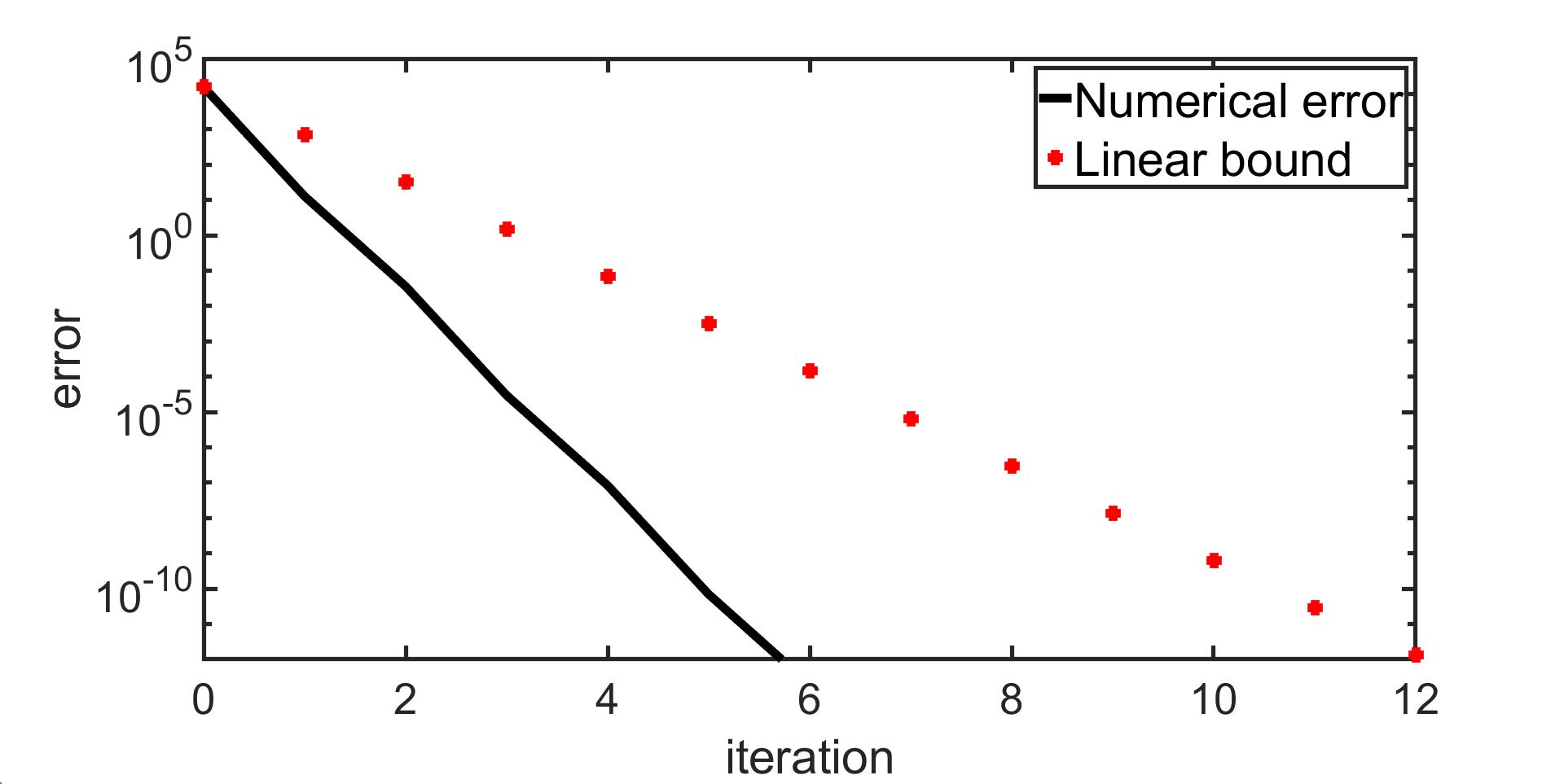} }}
    \quad
    \subfloat{{\includegraphics[width=5cm,height=3cm]{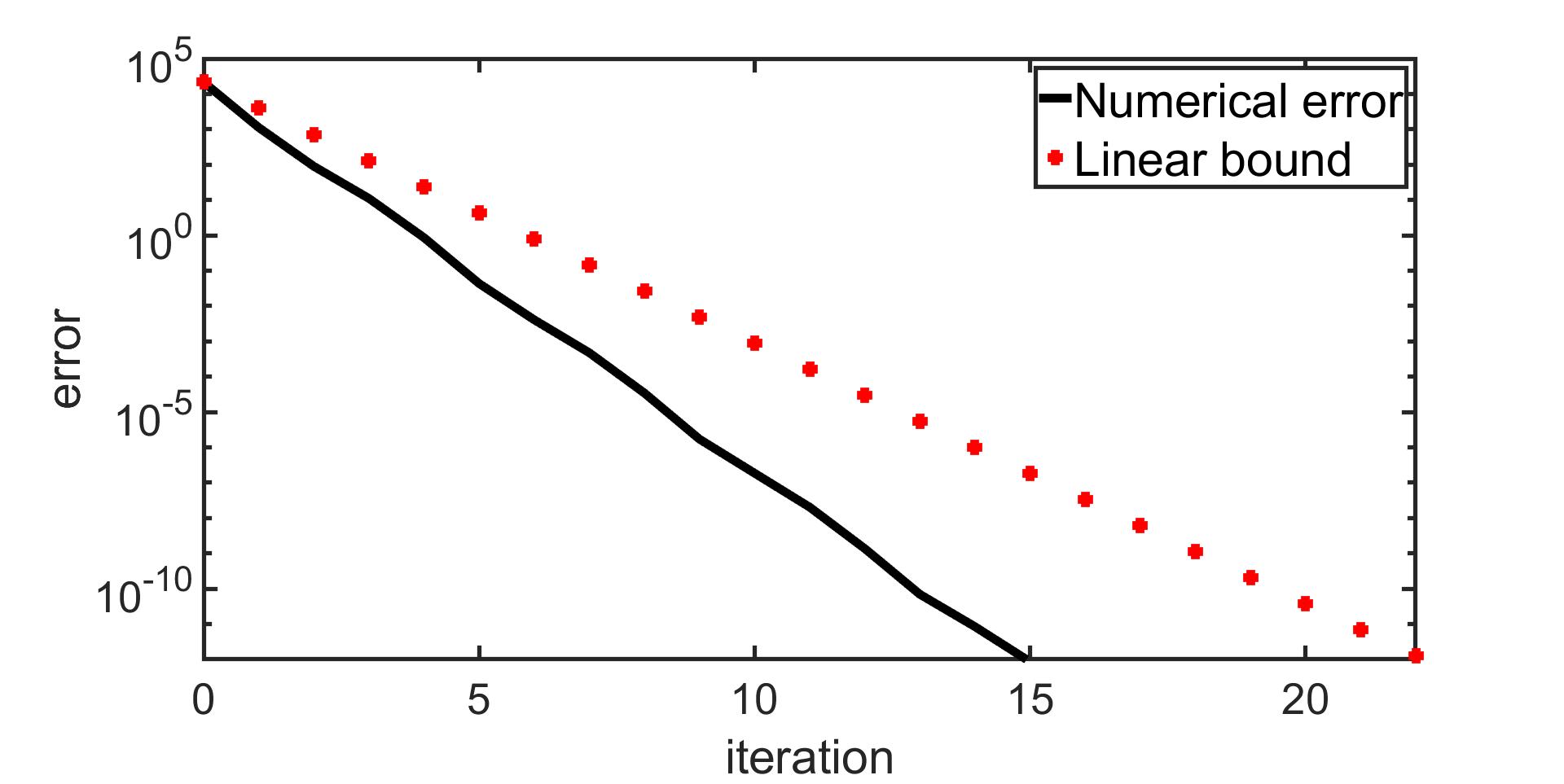} }}
    \caption{Comparison of the numerical error and the theoretical error estimates for  thirty-two (left) and sixty-four (right) subdomains with $h=1/512, \theta=1/4$.}
    \label{NNlongtimemultiple}
\end{figure}
\subsection{Numerics of DN \& NN method in 2D}
To perform numerical experiments for DN method in 2D  for equal subdomain we have taken $\Omega=(0, 1)\times(0, 1)$ and split it into $\Omega_1=(0, 1/2)\times(0, 1)$ and $\Omega_2=(1/2, 1)\times(0, 1)$. For unequal subdomain: first we consider Neumann subdomain is larger than Dirichlet Subdomain by choosing the domain $\Omega=(1, 2)\times(0, 1)$ and partitioned into $\Omega_1=(1, 1.4)\times(0, 1)$ and $\Omega_2=(1.4, 2)\times(0, 1)$. And secondly for Dirichlet subdomain is larger than Neumann subdomain,  we choose the domain  $\Omega=(-1.5, 1)\times(0, 1)$ and partitioned into $\Omega_1=(-1.5, 0)\times(0, 1)$ and $\Omega_2=(0, 1)\times(0, 1)$. 
We plot the error curves of DN method with above described decomposition and for various parameter of $\theta$, mesh size $h_x$ (discretization parameter in $x-$ direction), $h_y$ (discretization parameter in $y-$ direction) and time step $\delta_t$  in Figure \ref{DNshorttime2d}, \ref{DNshorttime2d_128u}, \ref{DNlongtime2d} and \ref{DNlongtime2d128u}, where we find that the number of iterations conform well with the results established in Theorems in Section \ref{Section2}.
For experiments of NN method in multisubdomain setting in 2D we take the domain $\Omega= (0, 16)\times(0, 1)$. For short time step $\delta_t$ we have given comparison result in terms of iteration count for NN method in multiple subdomain case with equal and unequal width in Table \ref{NNmany_equalandunequal_2d} by fixing $h_y=1/32$ and parameter $\theta=1/4$ and varying $h_x$. In Figure \ref{NNlongtimeerror2d_4816} we plot the error curve of NN method for $4, 8, 16$ subdomain decomposition of equal width for $h_x=1/64, h_y=1/32$ and by varying the parameter $\theta$.
In Figure \ref{NNlongtimeerror2d_comp} we have given  comparison of the numerical error and the theoretical error estimate coming from Theorem \ref{NNequal2dthm} for NN method with fixed $h_y=1/32, \theta=1/4$ and by varying the number of subdomain and mesh size $h_x$ for long time step $\delta_t$.
    
\begin{figure}
    \centering
    \subfloat{{\includegraphics[width=5cm,height=3cm]{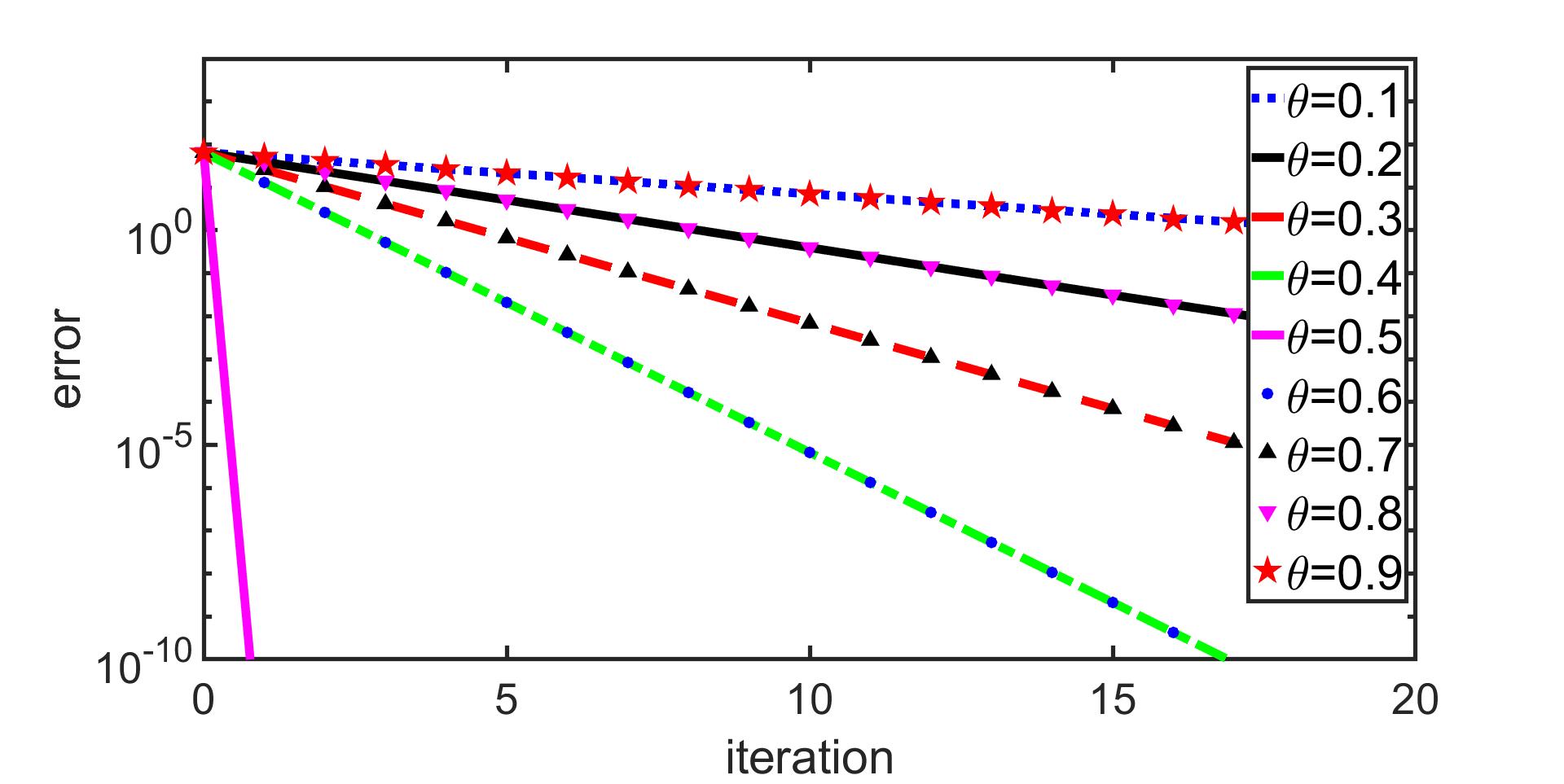} }} 
    \subfloat{{\includegraphics[width=5cm,height=3cm]{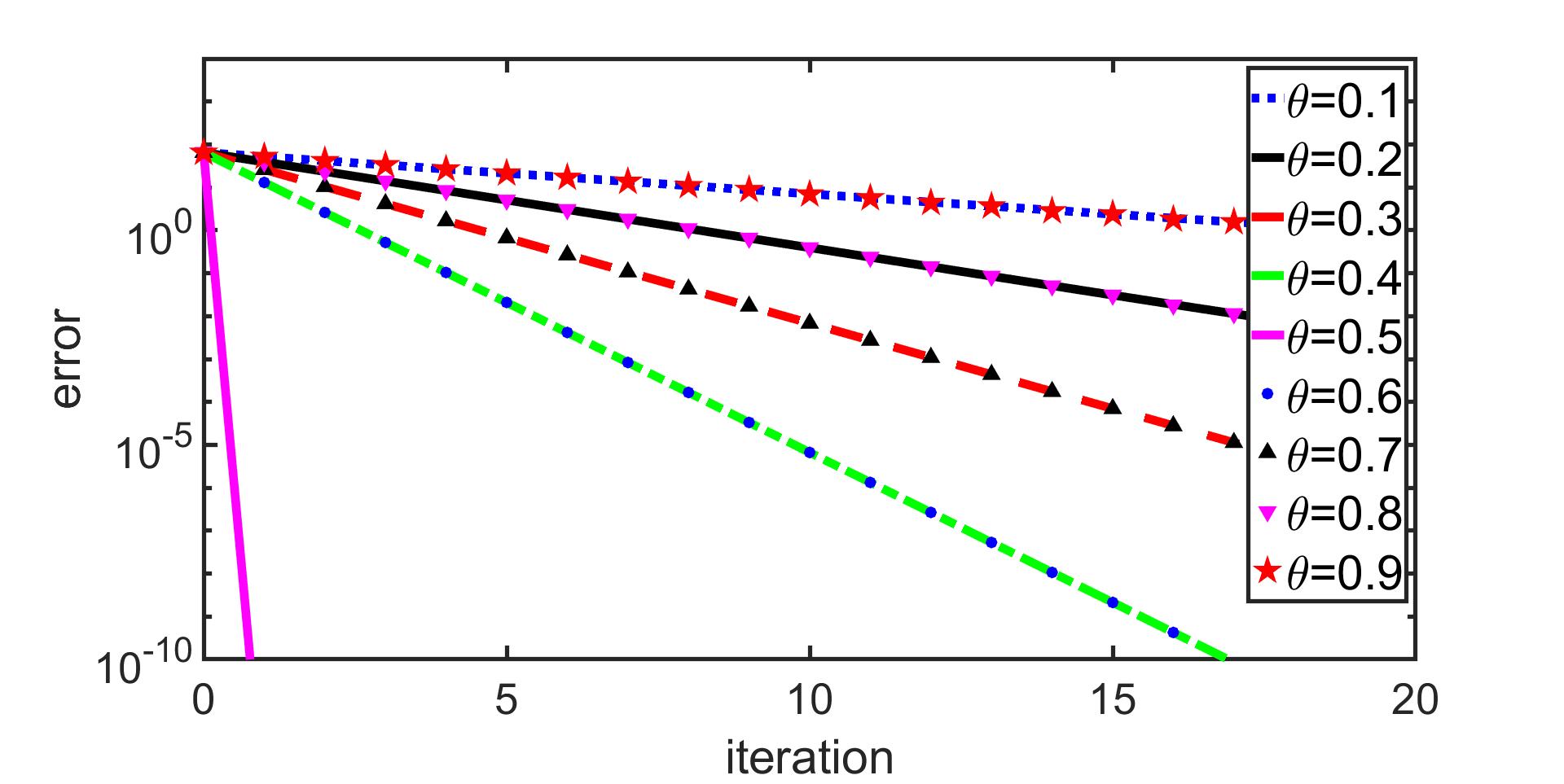} }}
    \subfloat{{\includegraphics[width=5cm,height=3cm]{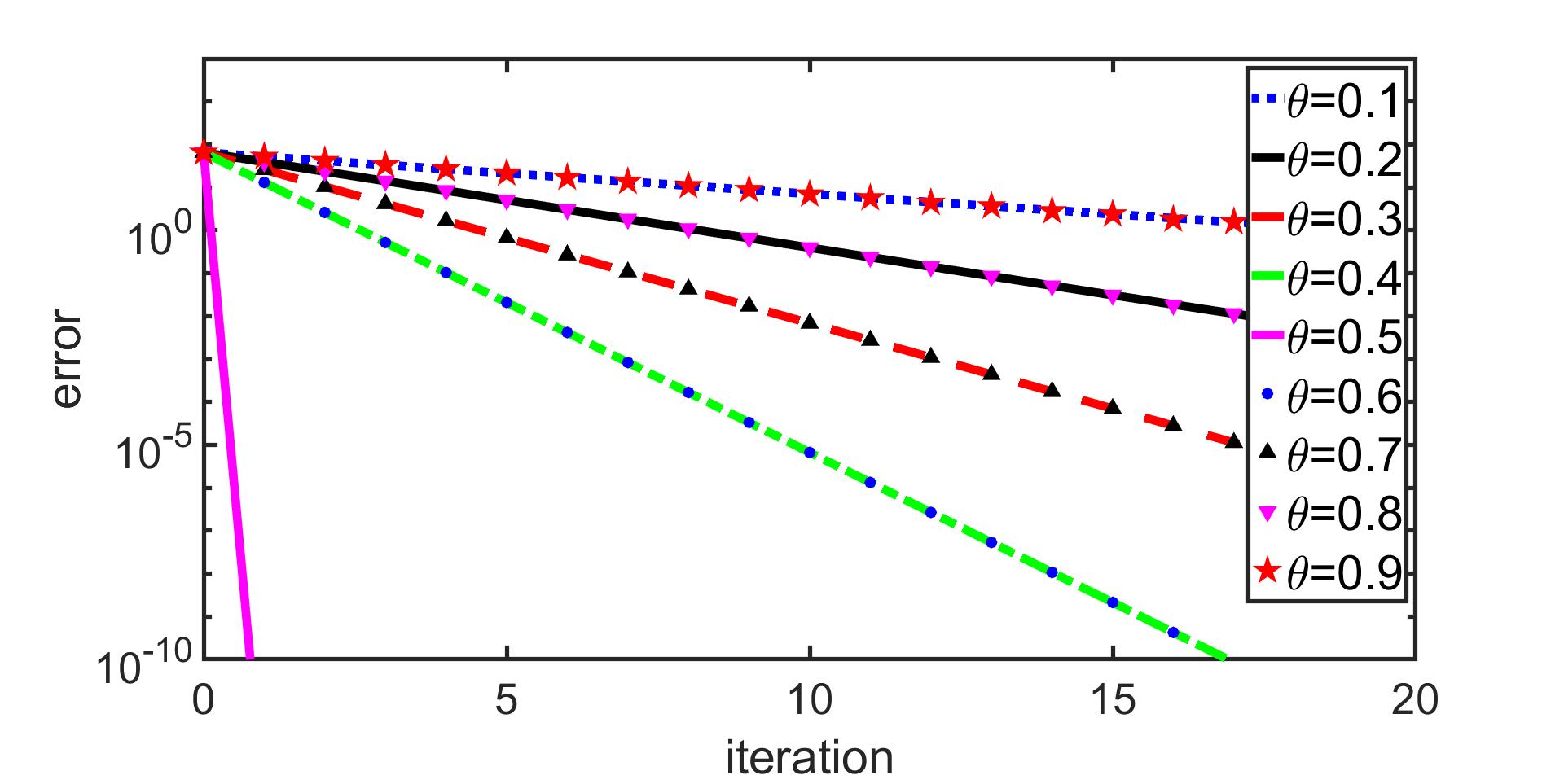} }}
    \caption{From left to right, iteration compared for DN with equal subdomaon (1st), Dirichlet subdomain larger than Neumann subdomain (2nd), and  Neumann subdomain larger than Dirichlet subdomain (3rd), for mesh size $h_x=h_y=1/64$ and time step $\delta_t=10^{-6}$.}
    \label{DNshorttime2d}
\end{figure}
\begin{figure}
    \centering
    \subfloat{{\includegraphics[width=5cm,height=3cm]{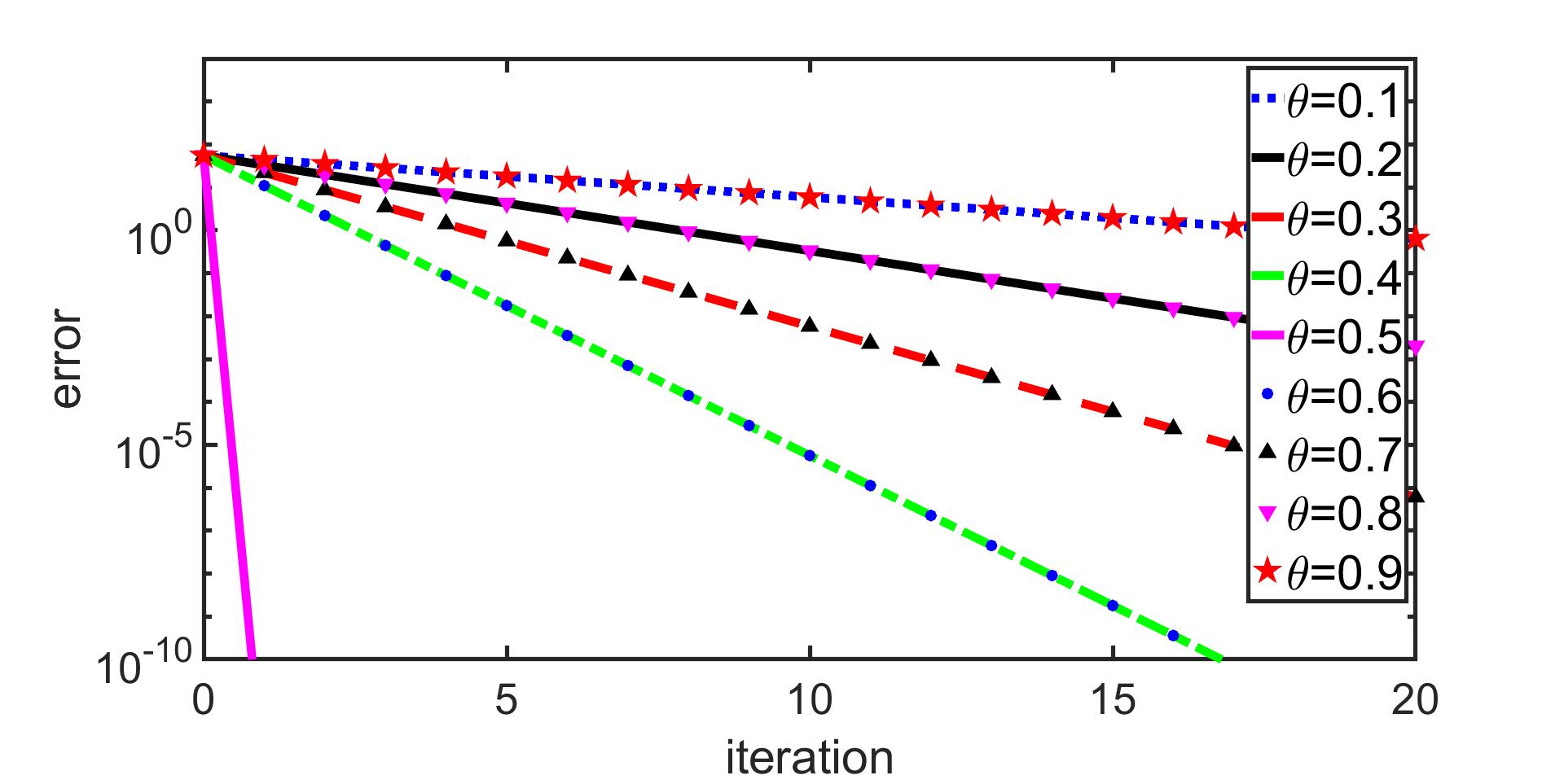} }} 
    \subfloat{{\includegraphics[width=5cm,height=3cm]{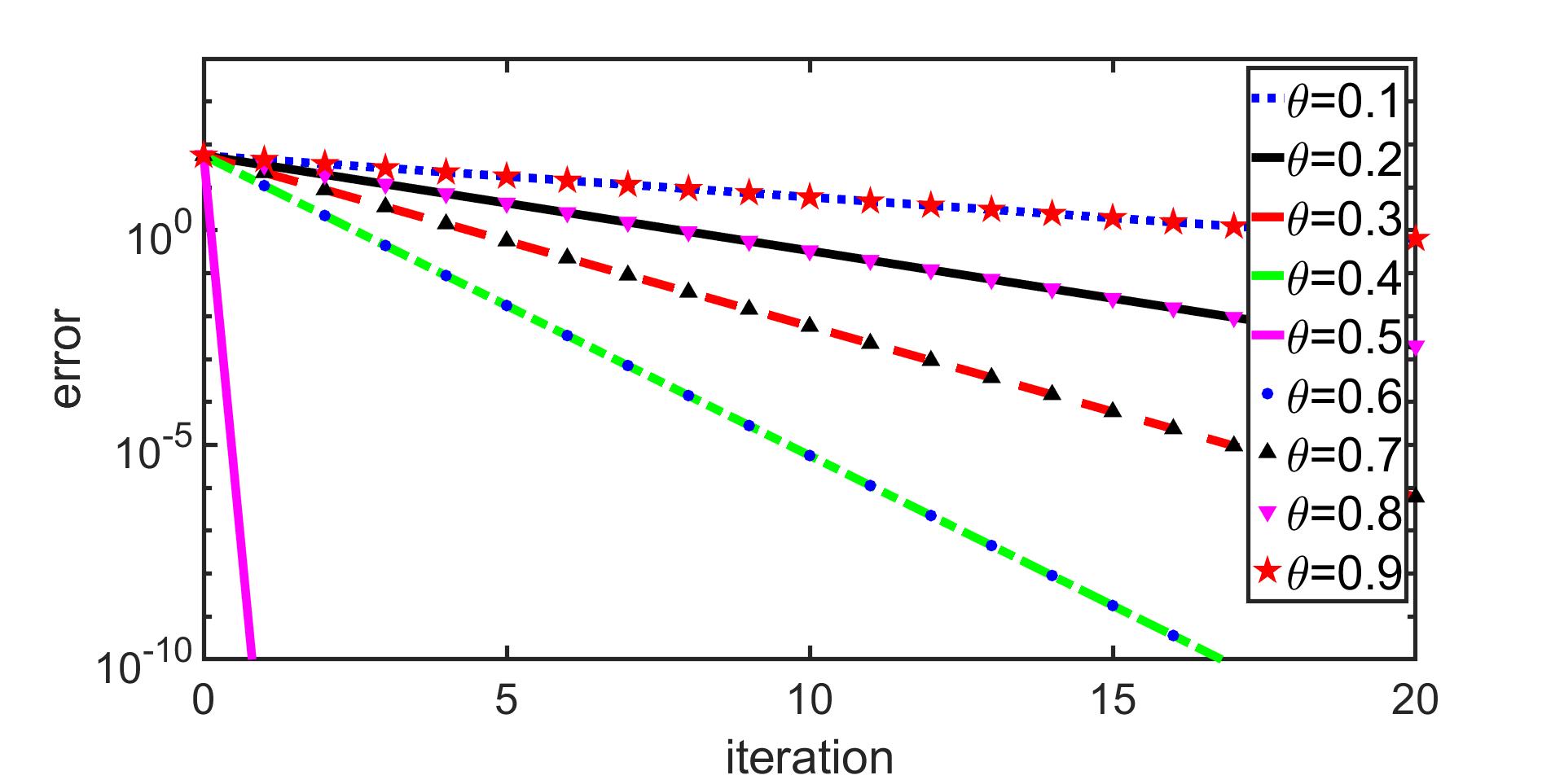} }}
    \subfloat{{\includegraphics[width=5cm,height=3cm]{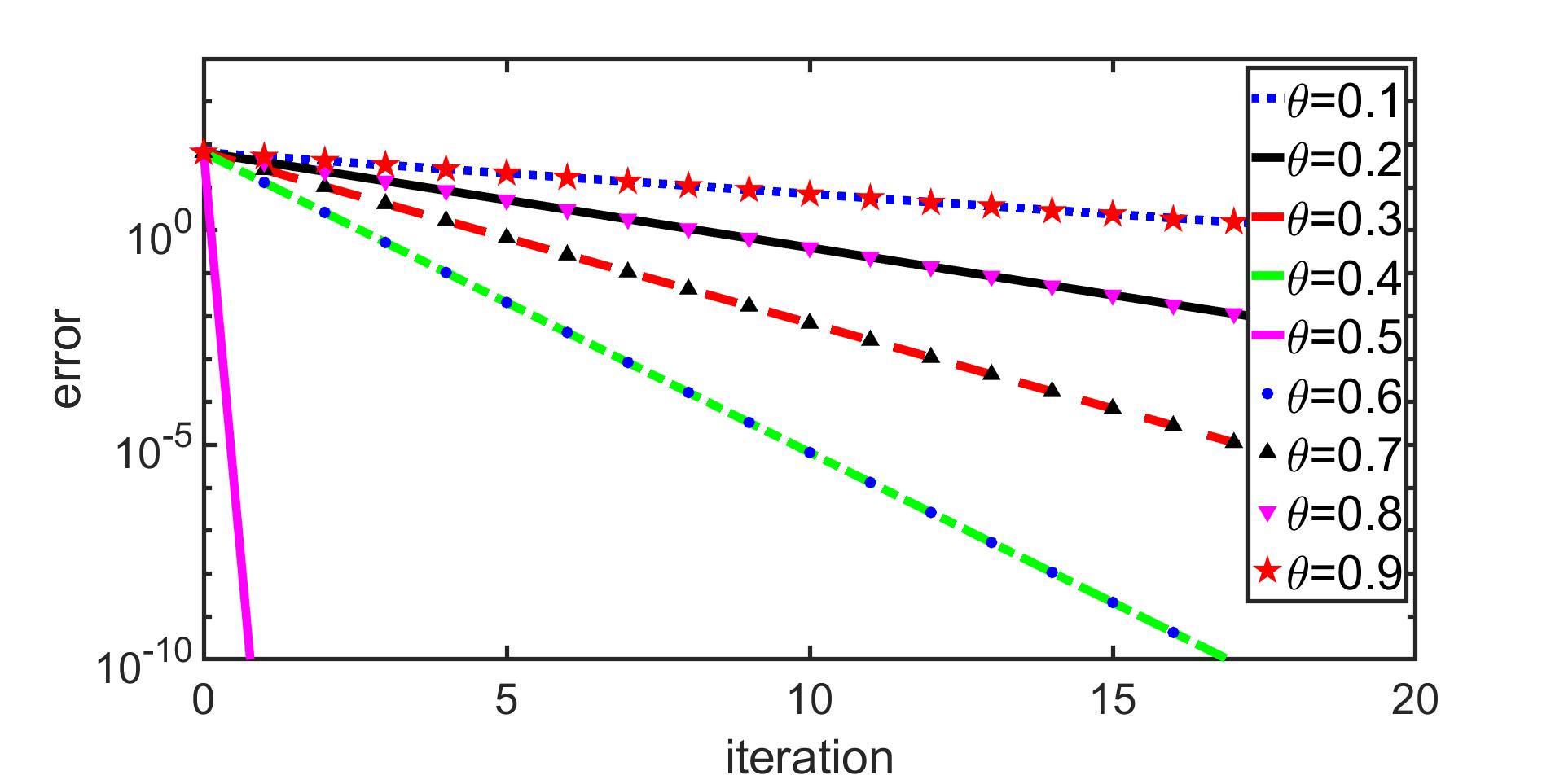} }}
    \caption{From left to right, iteration compared for DN with equal subdomaon (1st), Dirichlet subdomain larger than Neumann subdomain (2nd), and  Neumann subdomain larger than Dirichlet subdomain (3rd), for mesh size $h_x=h_y=1/128$ and time step $ \delta_t=10^{-6}$.}
    \label{DNshorttime2d_128u}
\end{figure}
\begin{figure}
    \centering
    \subfloat{{\includegraphics[width=5cm,height=3cm]{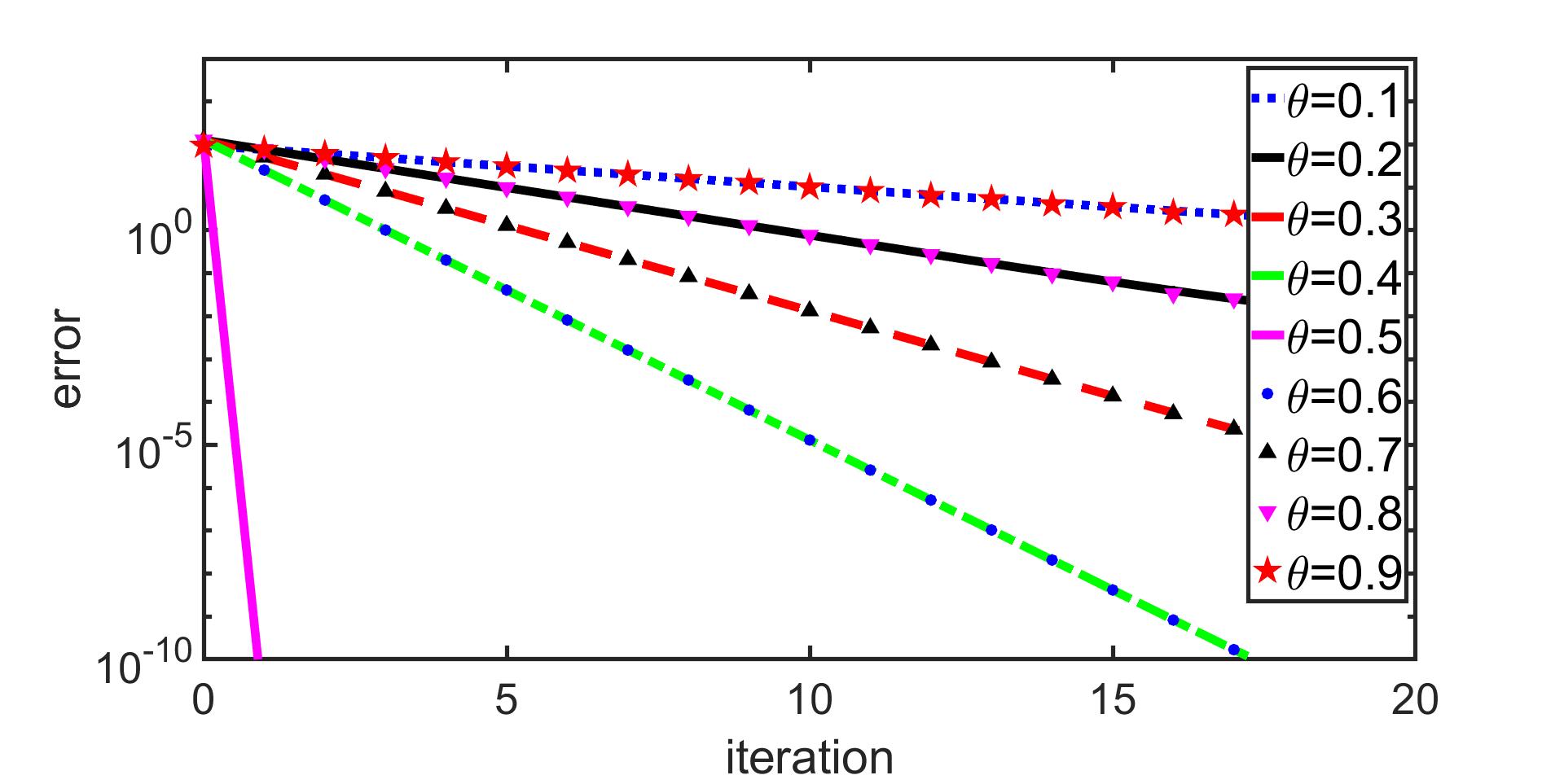} }}
    \subfloat{{\includegraphics[width=5cm,height=3cm]{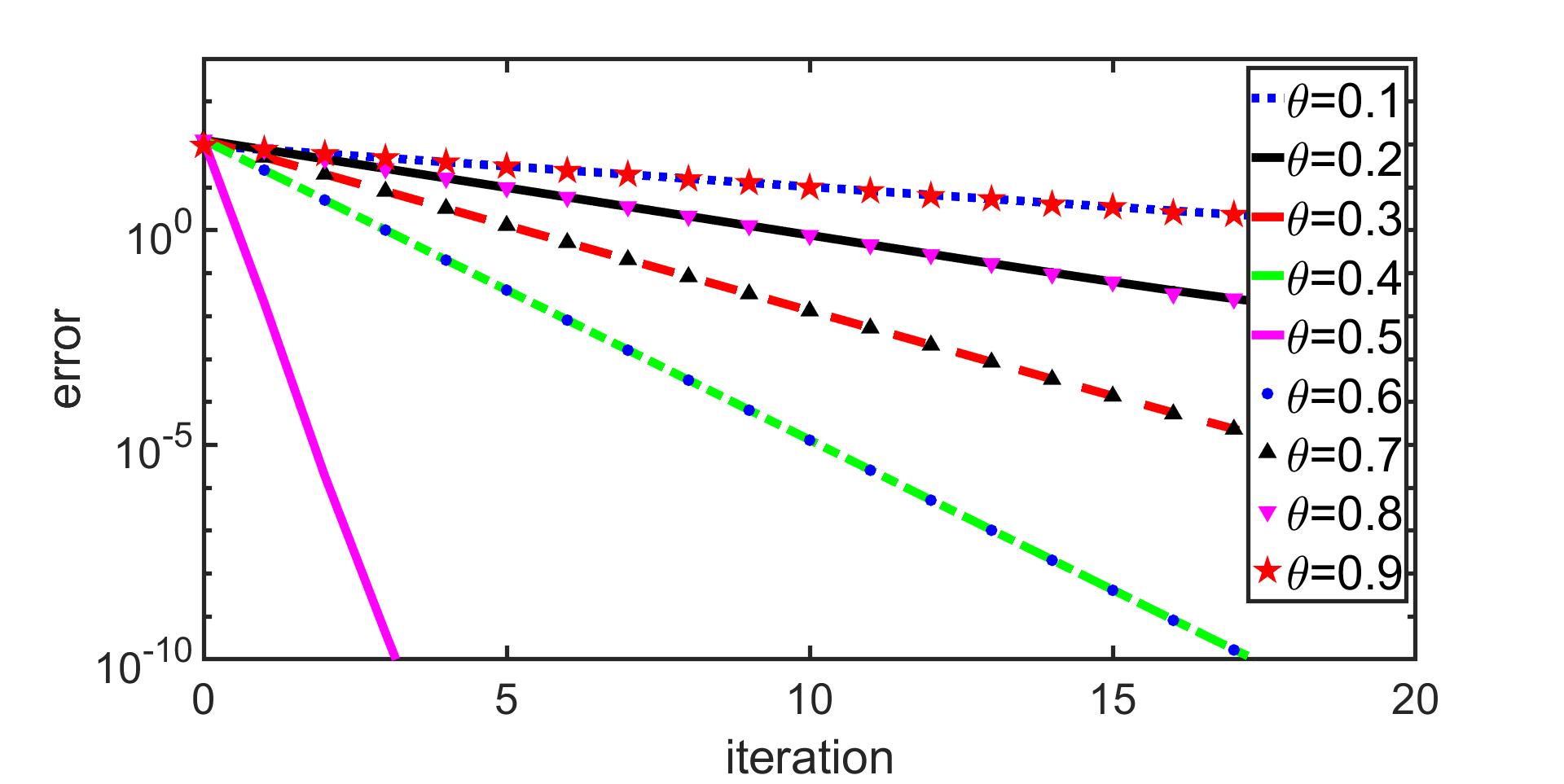} }}
    \subfloat{{\includegraphics[width=5cm,height=3cm]{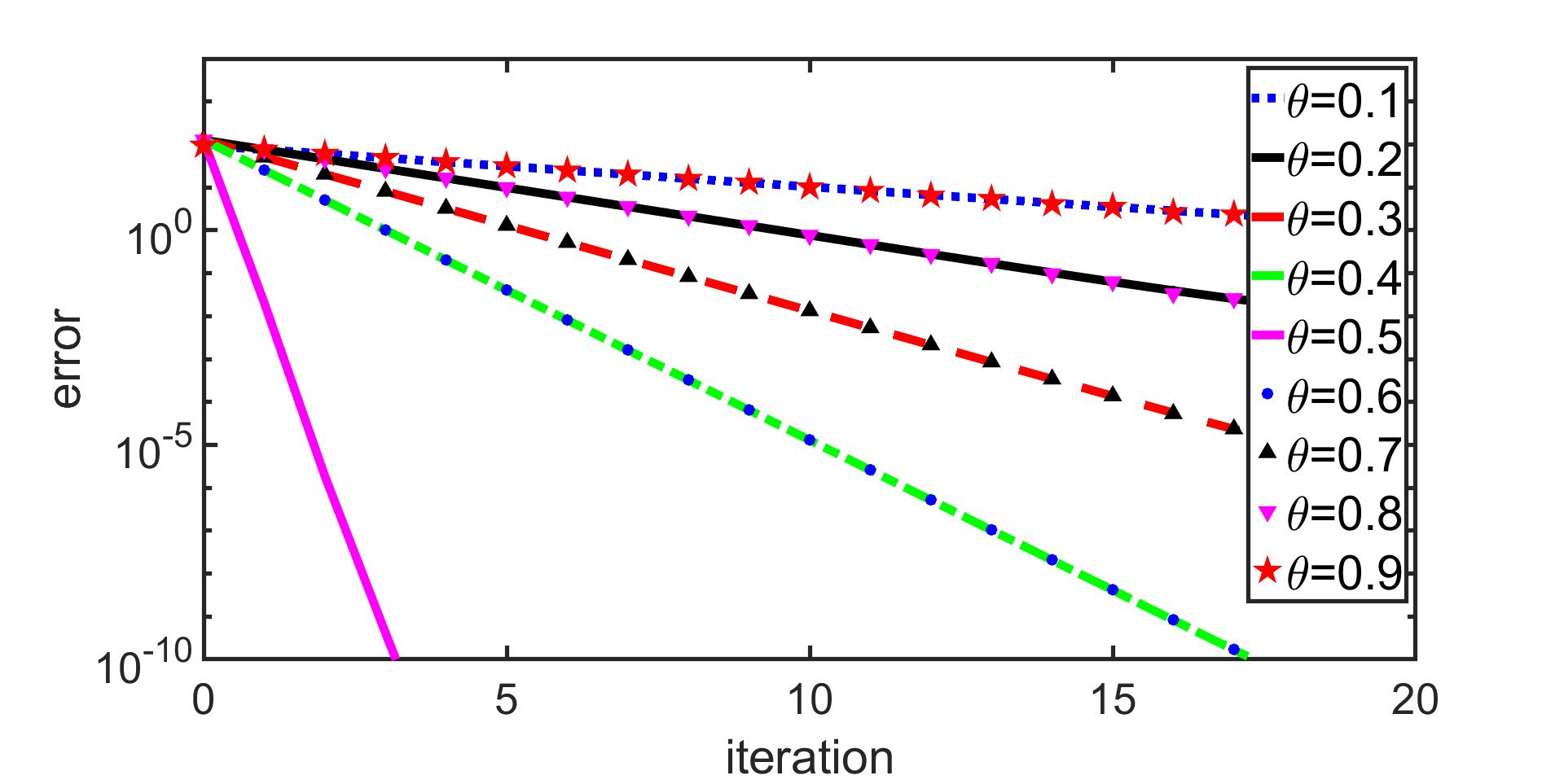} }}
    \caption{From left to right, iteration compared for DN with equal subdomaon (1st), Dirichlet subdomain larger than Neumann subdomain (2nd), and  Neumann subdomain larger than Dirichlet subdomain (3rd), for mesh size $h_x=h_y=1/64$ and time step $ \delta_t=10^{-3}$.}
    \label{DNlongtime2d}
\end{figure}
\begin{figure}
    \centering
    \subfloat{{\includegraphics[width=5cm,height=3cm]{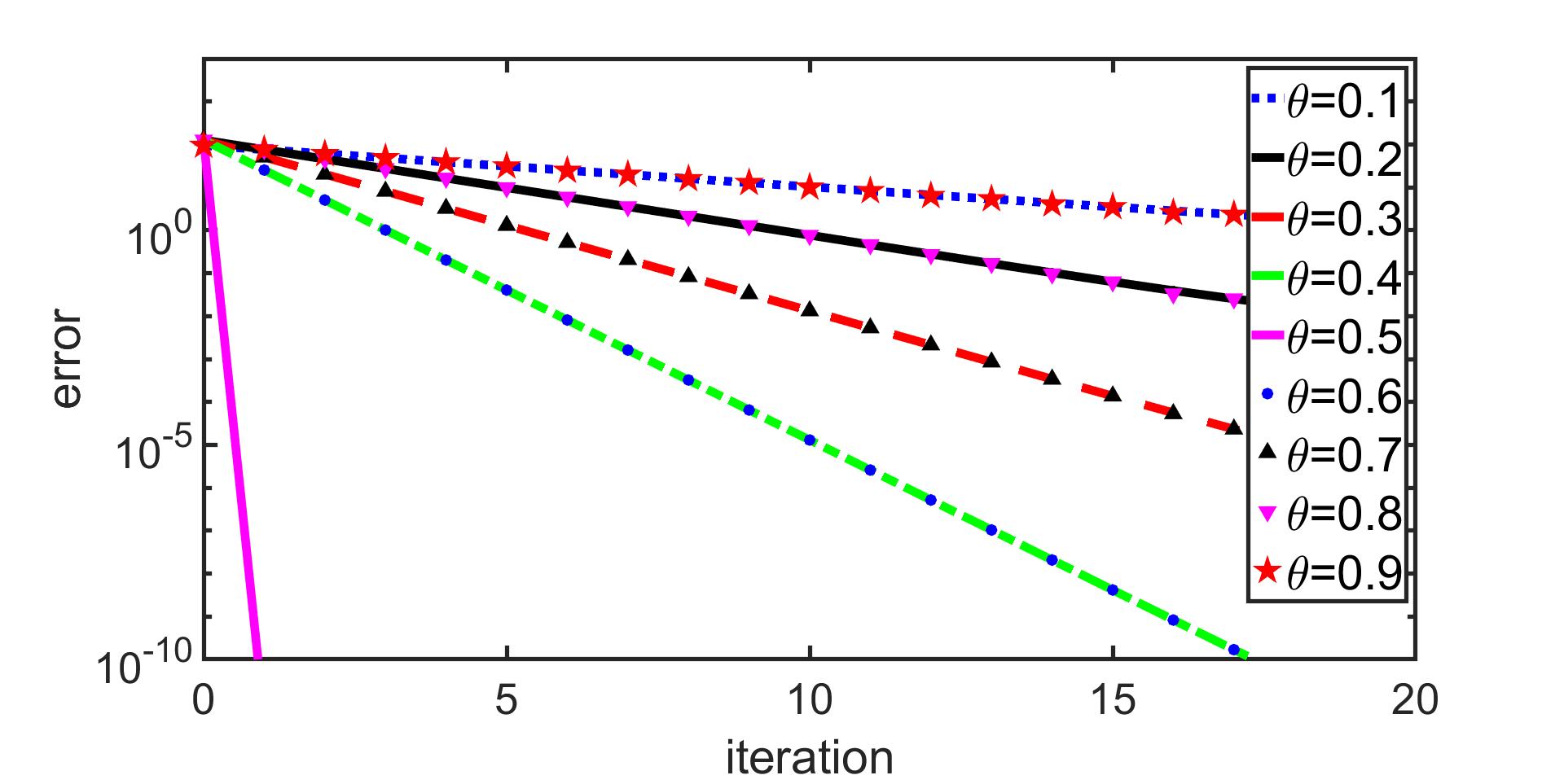} }}
    \subfloat{{\includegraphics[width=5cm,height=3cm]{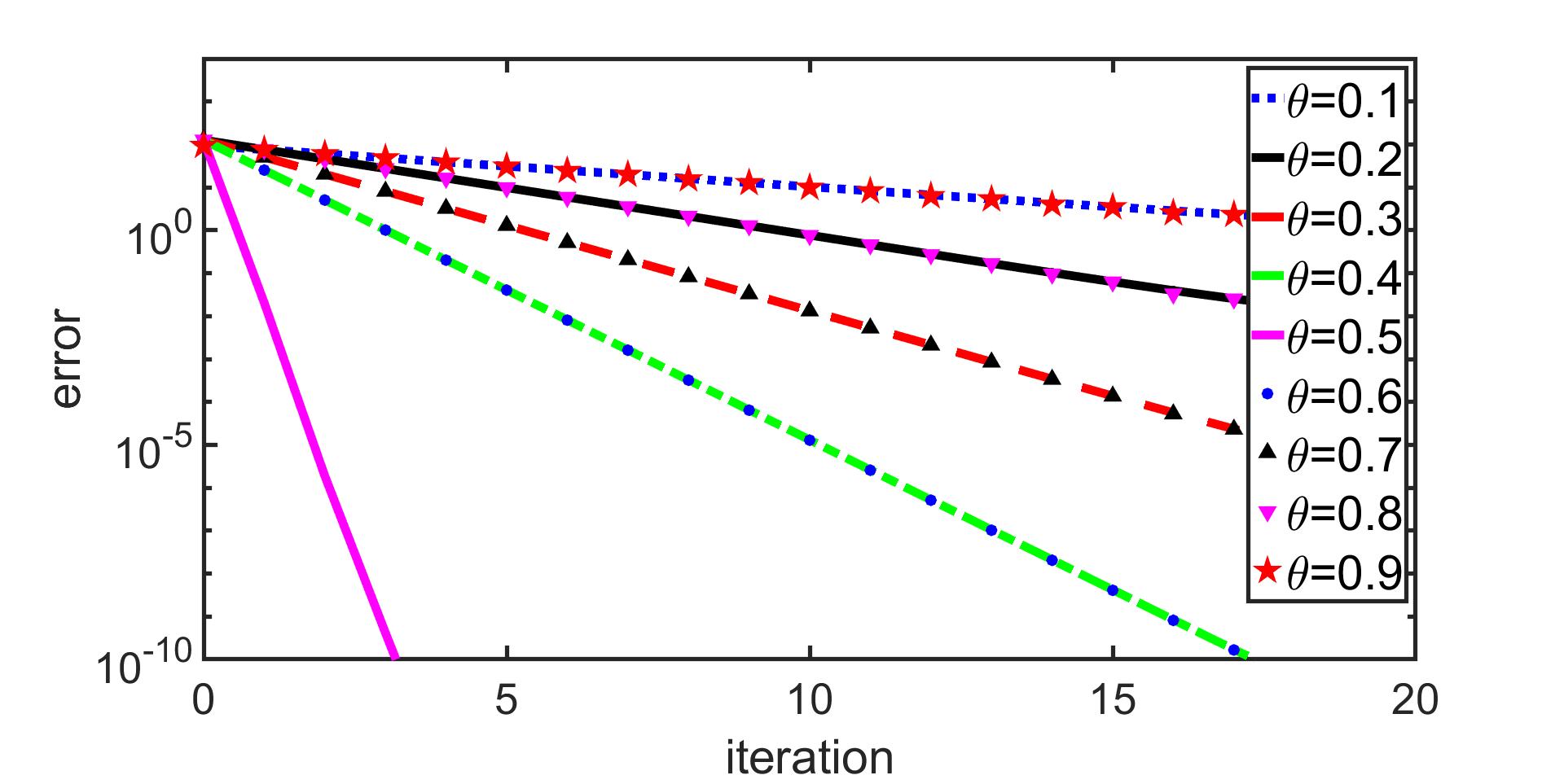} }}
    \subfloat{{\includegraphics[width=5cm,height=3cm]{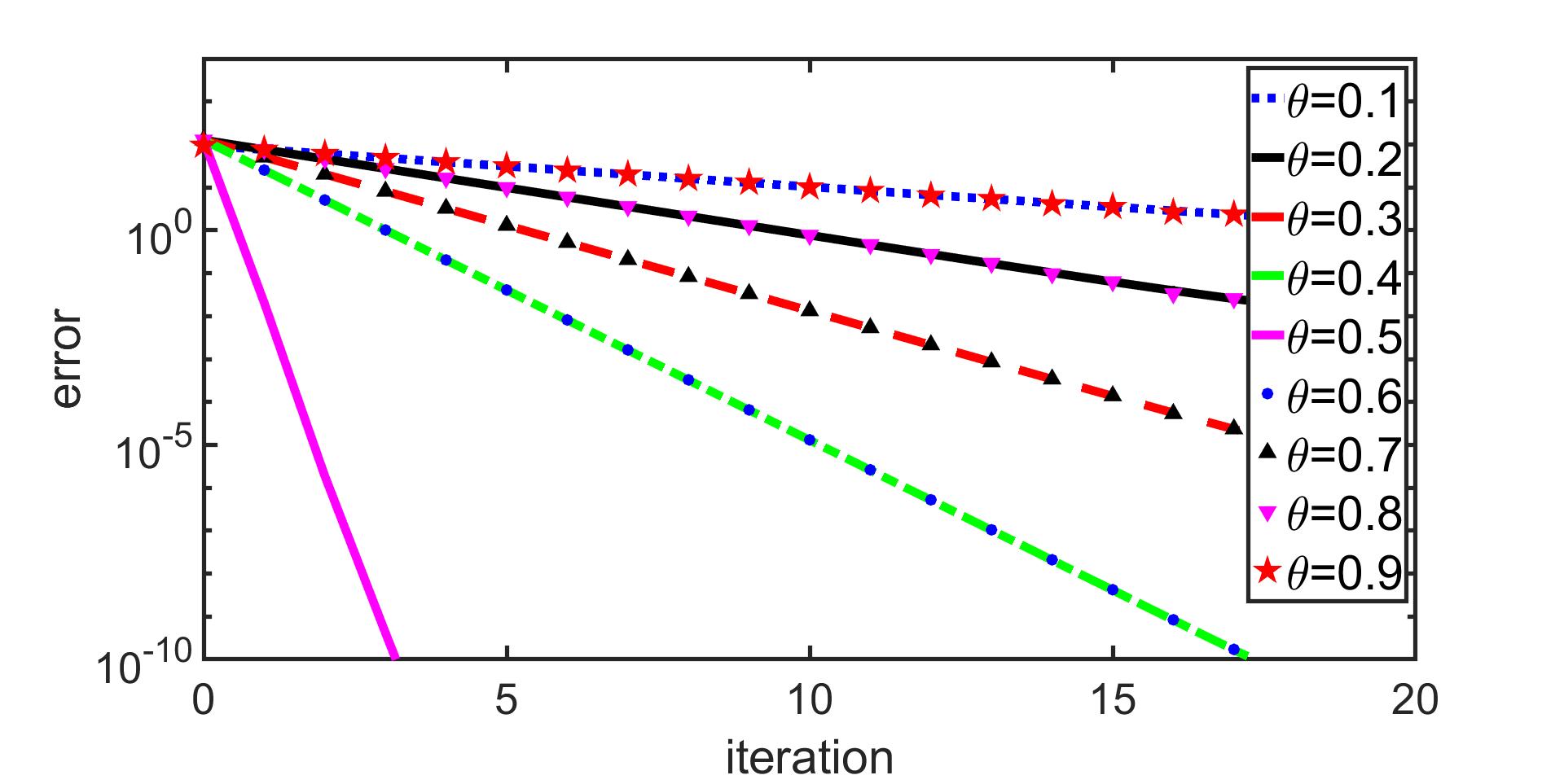} }}
    \caption{From left to right, iteration compared for DN with equal subdomaon (1st), Dirichlet subdomain larger than Neumann subdomain (2nd), and  Neumann subdomain larger than Dirichlet subdomain (3rd), for mesh size $h_x=h_y=1/128$ and time step $ \delta_t=10^{-3}$.}
    \label{DNlongtime2d128u}
\end{figure}


\begin{table}
    \begin{minipage}{.5\textwidth}
      \centering
      \begin{tabular}{|p{1.2cm}| p{.3cm}| p{.3cm}| p{.3cm}| p{.3cm}| p{.3cm}| p{.3cm}| }\hline
\diagbox{$h_x$}{ sd}&{2}&{4}&{8}&{16}&{32}&{64}\\
		\hline1/64&  2& 2&  2& 2&  2& 2\\
		\hline1/128& 2& 2&  2& 2&  2& 2\\
		\hline1/256& 2& 2&  2& 2&  2& 3\\
		\hline1/512& 2& 2&  2& 2&  3& 3\\
		\hline
	\end{tabular}
    \end{minipage}
    \begin{minipage}{.4\textwidth}
      \centering
      \begin{tabular}{|p{1.2cm}| p{.3cm}| p{.3cm}| p{.3cm}| p{.3cm}| p{.3cm}| p{.3cm}| }\hline
\diagbox{$h_x$}{sd}&{2}&{4}&{8}&{16}&{32}&{64}\\
		\hline1/64&  2& 2&  3& 3&  4& 6\\
		\hline1/128& 2& 2&  3& 4&  4& 6\\
		\hline1/256& 2& 2&  4& 4&  6& 8\\
		\hline1/512& 2& 2&  4& 4&  8& 10\\
		\hline
	\end{tabular}
    \end{minipage}
    \caption{Number of iteration compared of NN for many subdomains of equal width (left) and unequal width (right) with $\delta_t=10^{-6}$ and $h_y=1/32$.}
\label{NNmany_equalandunequal_2d}
  \end{table}

\begin{figure}
    \centering
    \subfloat{{\includegraphics[width=5cm,height=3cm]{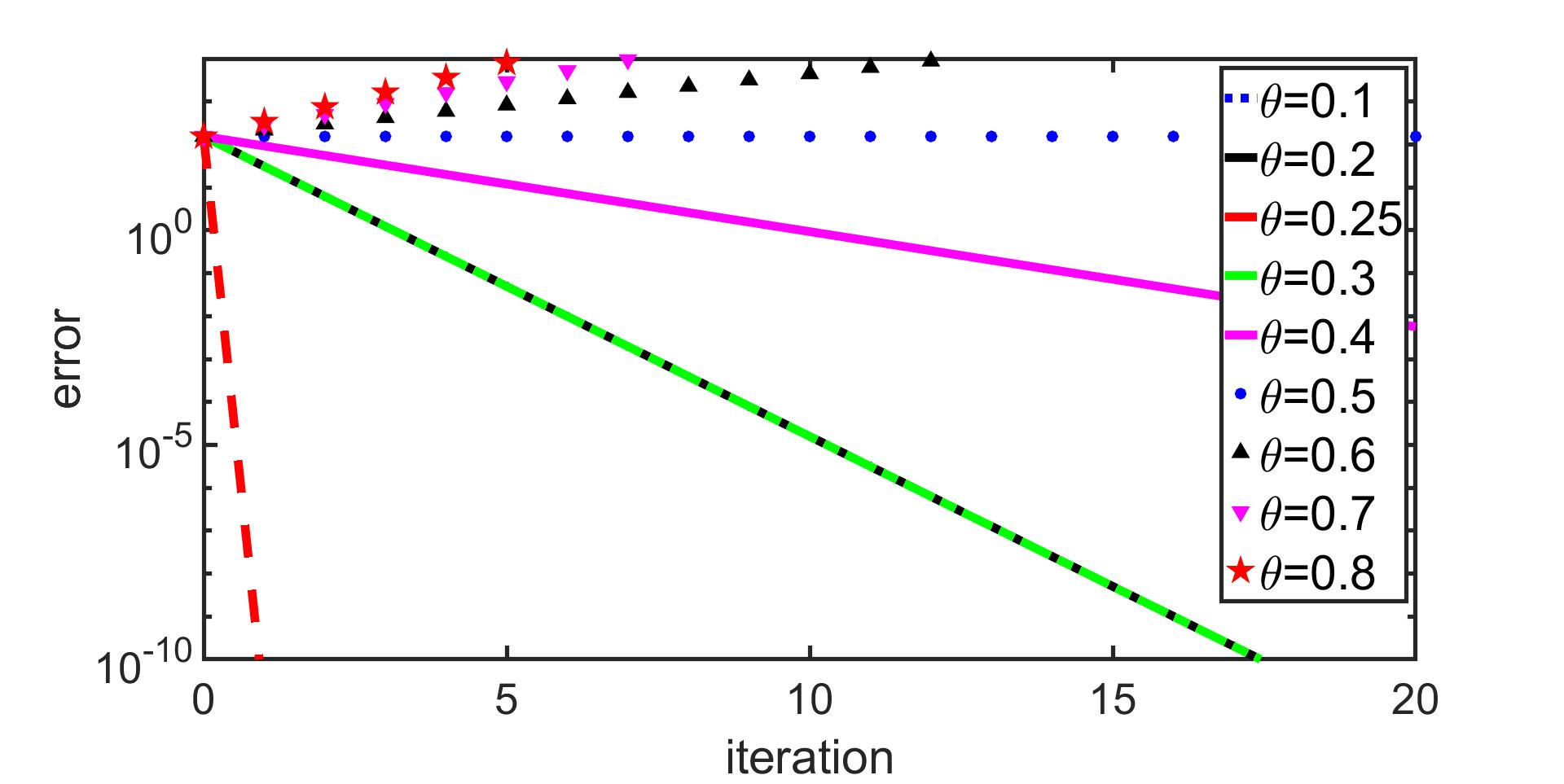} }}
    \subfloat{{\includegraphics[width=5cm,height=3cm]{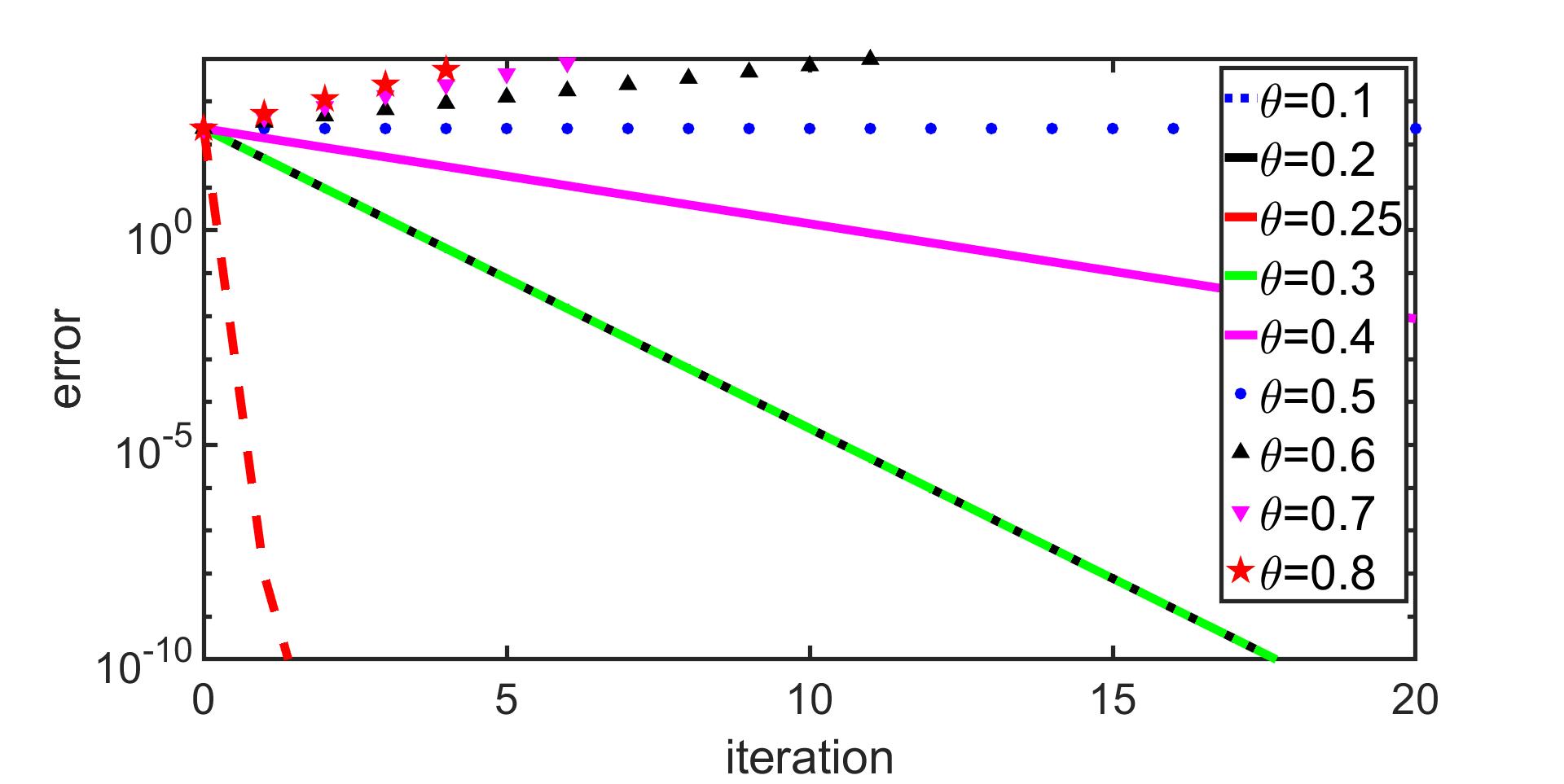} }}
    \subfloat{{\includegraphics[width=5cm,height=3cm]{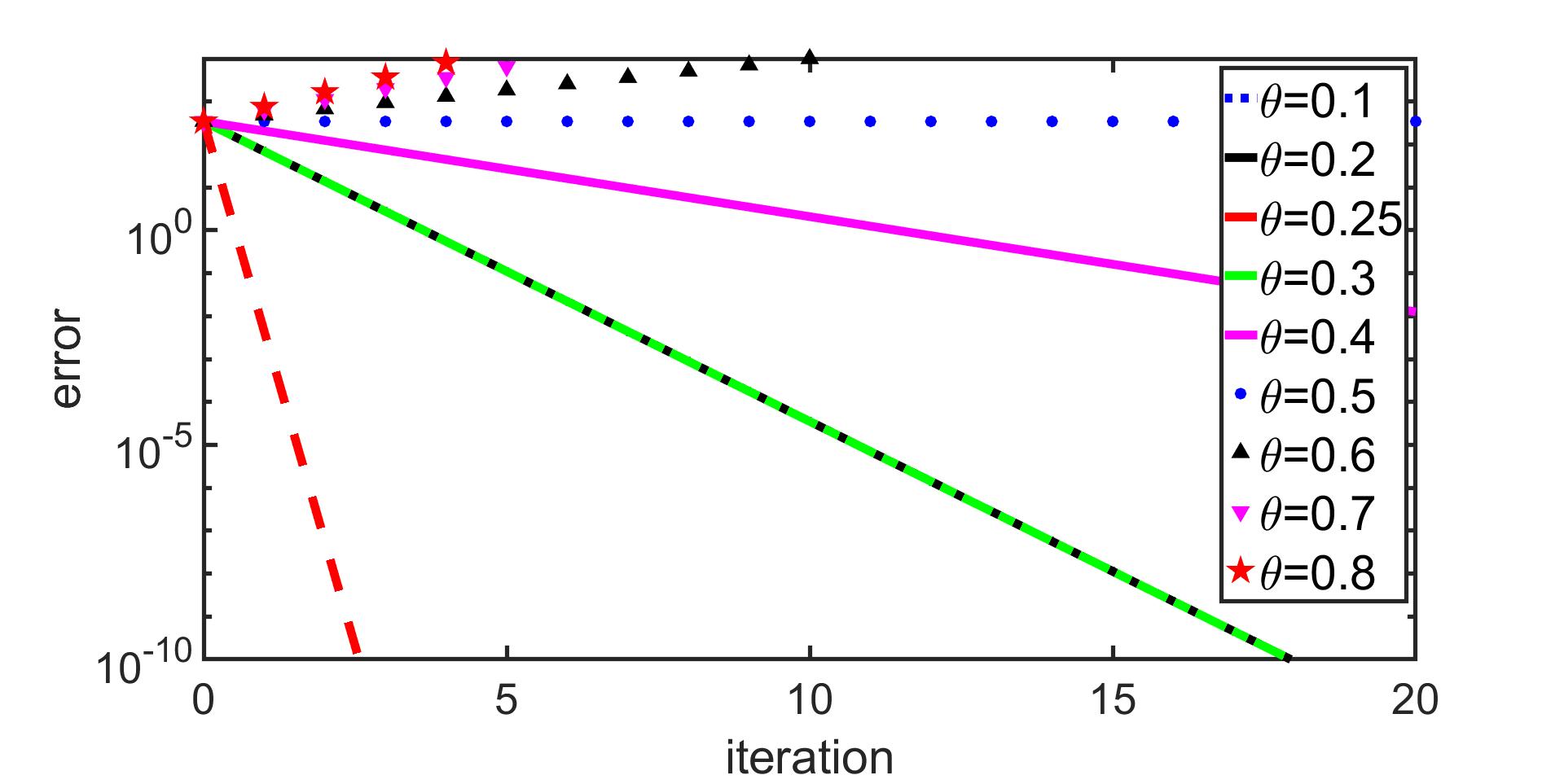} }}
    \caption{From left to right, iteration compared for NN method with 4, 8, 16 subdomain for $\delta_t=10^{-3}, h_x=1/64, h_y=1/32$.}
    \label{NNlongtimeerror2d_4816}
\end{figure}

\begin{figure}
    \centering
    \subfloat{{\includegraphics[width=5cm,height=3cm]{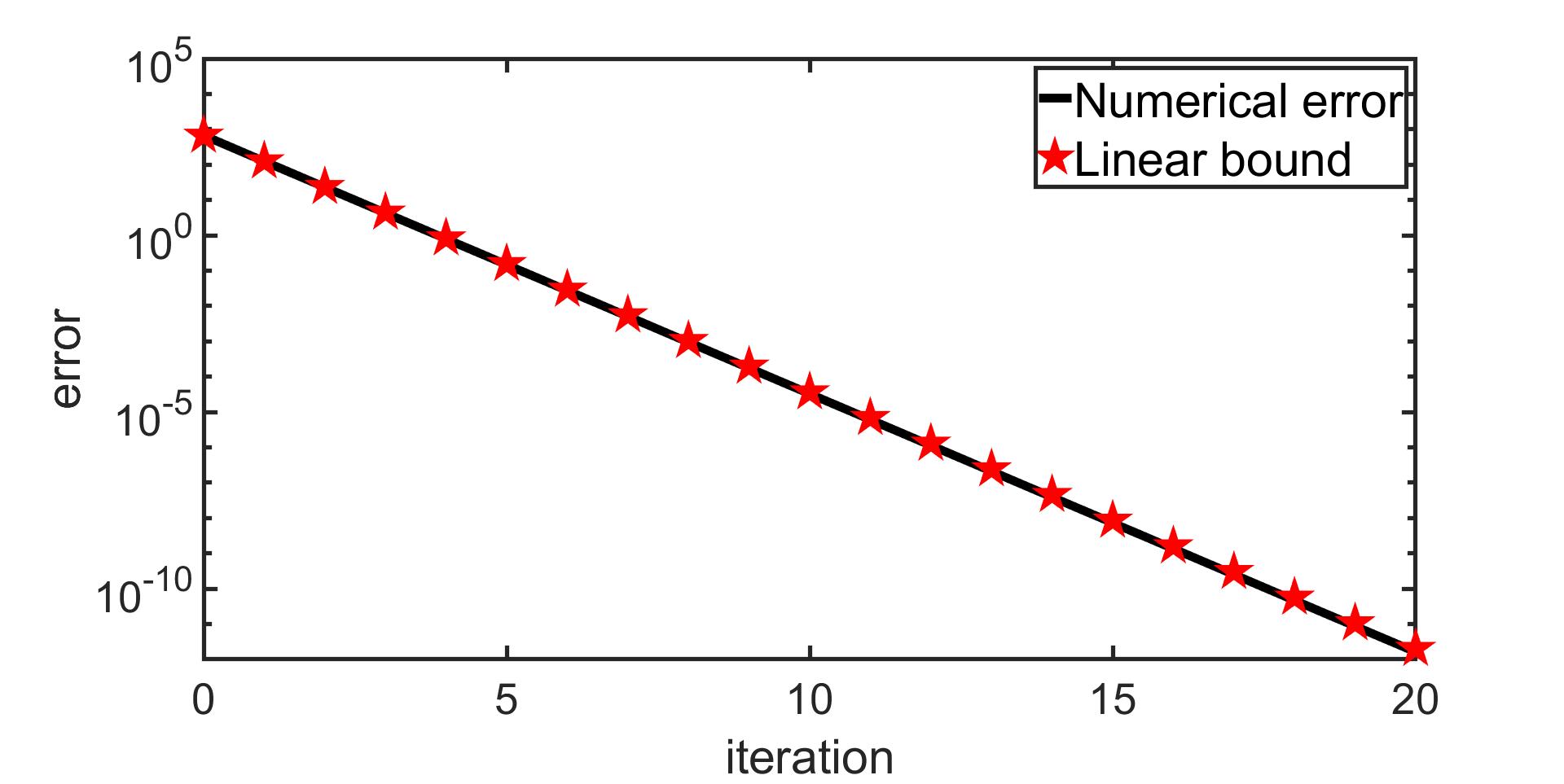} }}
    \subfloat{{\includegraphics[width=5cm,height=3cm]{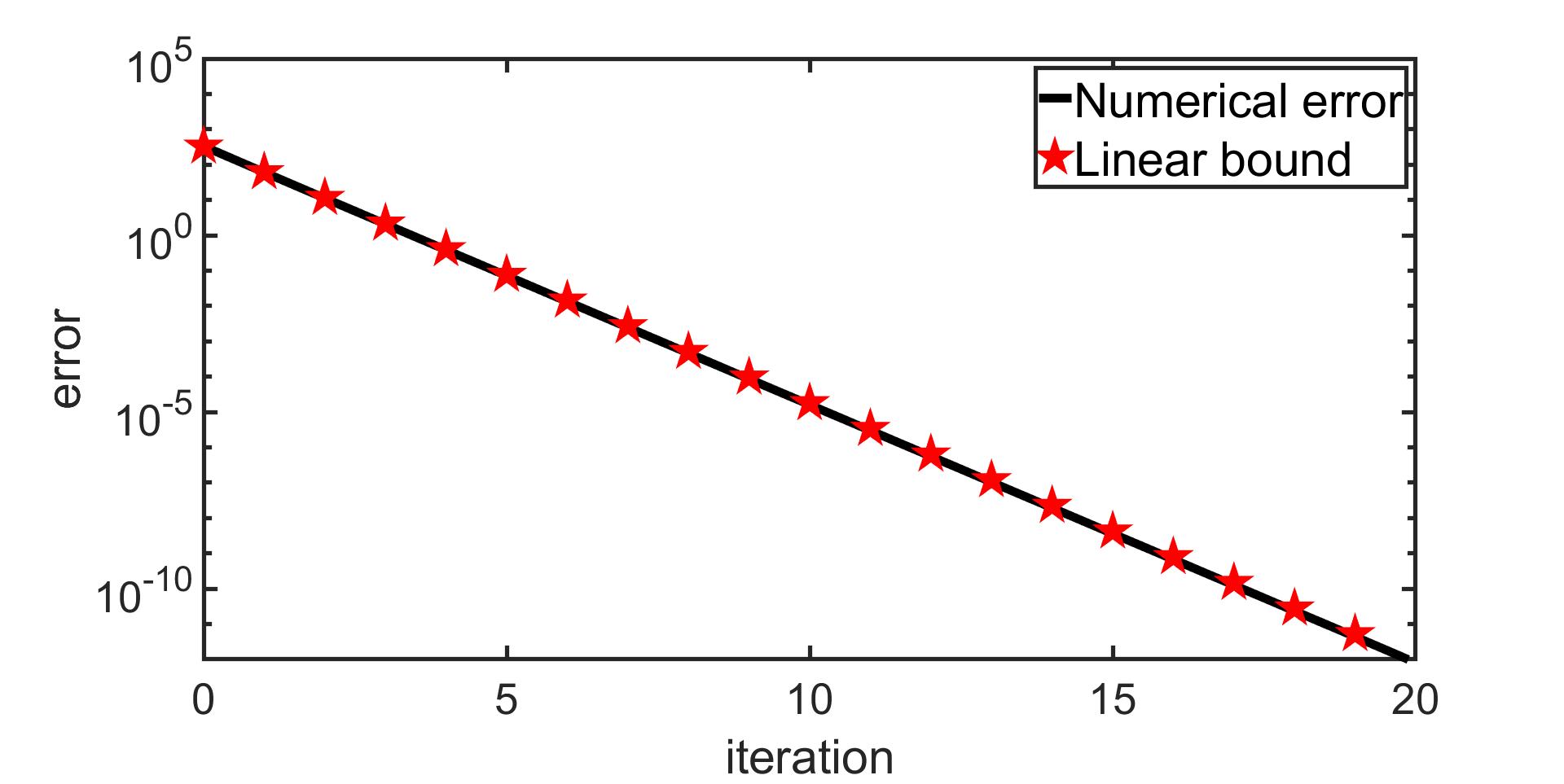} }}
    \subfloat{{\includegraphics[width=5cm,height=3cm]{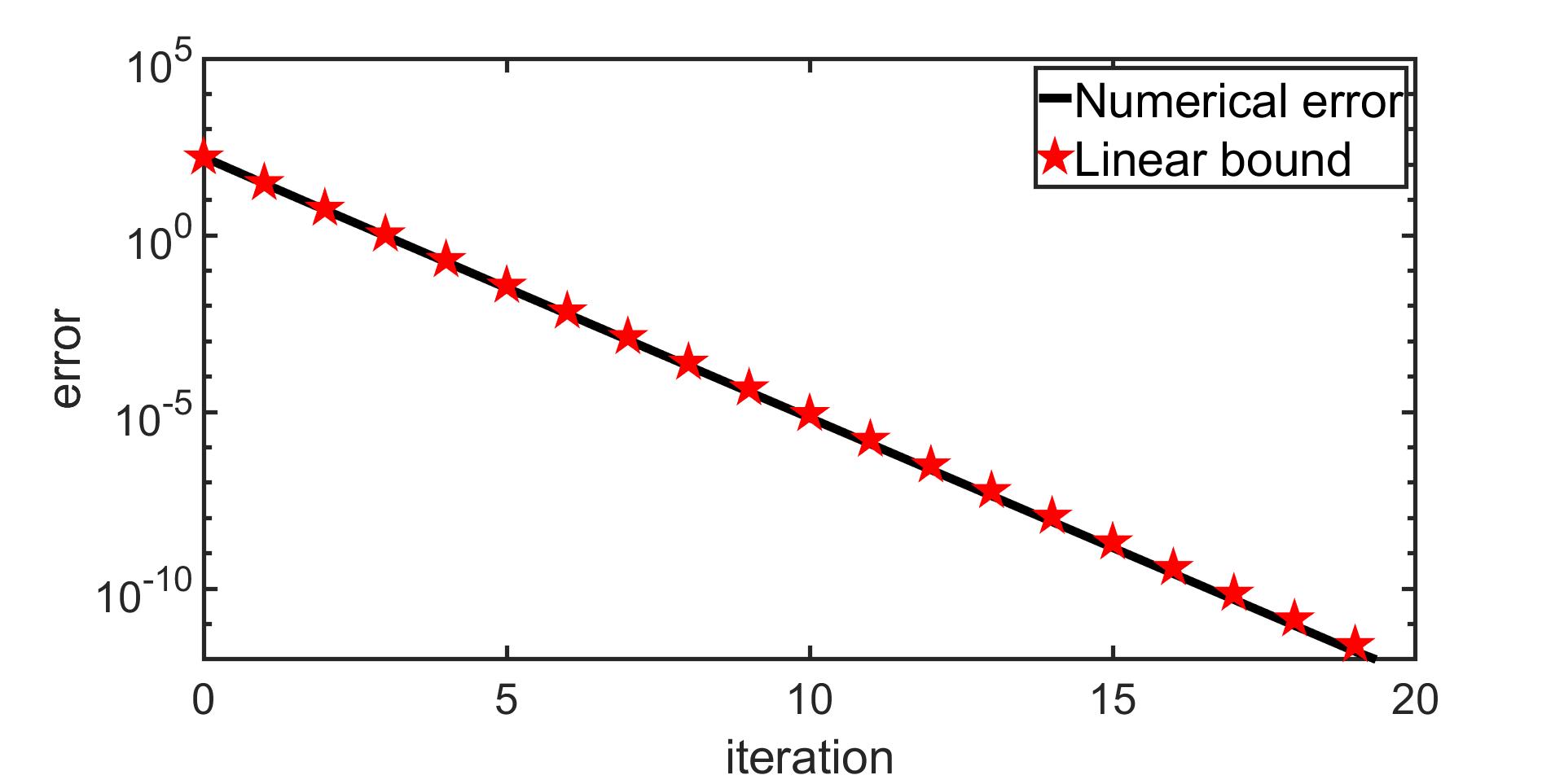} }}
    \caption{Comparison of the numerical error and the theoretical error estimates of NN for the mesh size $h_x=1/64, 1/128, 1/256$ with $64, 32, 16$ subdomain of equal width respectively from left to right for $\delta_t=10^{-3}, h_y=1/32$ and $\theta=1/4$.}
    \label{NNlongtimeerror2d_comp}
\end{figure}

\section{Conclusions}

We studied the Dirichlet-Neumann  and Neumann-Neumann method for the CH equation for two as well as multiple subdomain decomposition. We proved convergence estimates for the case of one dimensional DN and NN. We also extended our analysis to the two dimensional CH equation case using Fourier techniques, and obtained convergence estimates for DN and NN. Using numerical experiments we showed that a proper choice of relaxation parameter gives finite step convergence of the proposed algorithms. We have also given numerical study of DN and NN for the CH equation with various parameters.
\section*{Acknowledgement} The first author would like to thank the CSIR India for the research grant and IIT Bhubaneswar for providing nice research environment.
\appendix
\section*{Appendix A}
In this extended section we provide the detail expressions of the elements of the matrix $\mathbb{T}$, defined by \eqref{Iterationmatrix} in Section \ref{Section3}. To find the error estimates in Theorem \ref{NNequalthm}, \ref{NNunequalthm}, \ref{NNequal2dthm} and \ref{NNunequal2dthm}, we have used these following expressions to get individual bounds. The elements are given as below.\\
$\alpha_1^1=1-\frac{\lambda_1}{\lambda}\Upsilon_{1,1}^1 +  \frac{\lambda_2}{\lambda}\Upsilon_{3,1}^1,
\alpha_1^2=\frac{\delta_t\lambda_1\lambda_2}{\lambda}\Upsilon_{1,1}^1 -  \frac{\delta_t\lambda_1\lambda_2}{\lambda}\Upsilon_{3,1}^1,
\alpha_1^3=\frac{\lambda_1}{\lambda}\Upsilon_{1,1}^2 - \frac{\lambda_2}{\lambda}\Upsilon_{3,1}^2,
\alpha_1^4=-\frac{\delta_t\lambda_1\lambda_2}{\lambda}\Upsilon_{1,1}^2 +  \frac{\delta_t\lambda_1\lambda_2}{\lambda}\Upsilon_{3,1}^2,
\alpha_1^5=-\frac{\lambda_1}{\lambda}\Upsilon_{1,1}^3 +  \frac{\lambda_2}{\lambda}\Upsilon_{3,1}^3,
\alpha_1^6=\frac{\delta_t\lambda_1\lambda_2}{\lambda}\Upsilon_{1,1}^3 -  \frac{\delta_t\lambda_1\lambda_2}{\lambda}\Upsilon_{3,1}^3,
$\\
$\beta_1^1=-\frac{1}{\delta_t\lambda}\Upsilon_{1,1}^1 +  \frac{1}{\delta_t\lambda}\Upsilon_{3,1}^1,
\beta_1^2=1 + \frac{\lambda_2}{\lambda}\Upsilon_{1,1}^1 -  \frac{\lambda_1}{\lambda}\Upsilon_{3,1}^1,
\beta_1^3=\frac{1}{\delta_t\lambda}\Upsilon_{1,1}^2 -  \frac{1}{\delta_t\lambda}\Upsilon_{3,1}^2,
\beta_1^4=-\frac{\lambda_2}{\lambda}\Upsilon_{1,1}^2 +  \frac{\lambda_1}{\lambda}\Upsilon_{3,1}^2,
\beta_1^5=-\frac{1}{\delta_t\lambda}\Upsilon_{1,1}^3 +  \frac{1}{\delta_t\lambda}\Upsilon_{3,1}^3,
\beta_1^6=\frac{\lambda_2}{\lambda}\Upsilon_{1,1}^3 -  \frac{\lambda_1}{\lambda}\Upsilon_{3,1}^3,
$\\
$\alpha_2^1=\frac{\lambda_1}{\lambda}\Upsilon_{1,2}^1 -  \frac{\lambda_2}{\lambda}\Upsilon_{3,2}^1,
\alpha_2^2=-\frac{\delta_t\lambda_1\lambda_2}{\lambda}\Upsilon_{1,2}^1 +  \frac{\delta_t\lambda_1\lambda_2}{\lambda}\Upsilon_{3,2}^1,
\alpha_2^3=1 -\frac{\lambda_1}{\lambda}\Upsilon_{1,2}^2 + \frac{\lambda_2}{\lambda}\Upsilon_{3,2}^2,
\alpha_2^4=-\frac{\delta_t\lambda_1\lambda_2}{\lambda}\Upsilon_{1,2}^2 +  \frac{\delta_t\lambda_1\lambda_2}{\lambda}\Upsilon_{3,2}^2,
\alpha_2^5=\frac{\lambda_1}{\lambda}\Upsilon_{1,2}^3 - \frac{\lambda_2}{\lambda}\Upsilon_{3,2}^3,
\alpha_2^6=-\frac{\delta_t\lambda_1\lambda_2}{\lambda}\Upsilon_{1,2}^3 +  \frac{\delta_t\lambda_1\lambda_2}{\lambda}\Upsilon_{3,2}^3,
\alpha_2^7=-\frac{\lambda_1}{\lambda}\Upsilon_{1,2}^4 +  \frac{\lambda_2}{\lambda}\Upsilon_{3,2}^4,
\alpha_2^8=\frac{\delta_t\lambda_1\lambda_2}{\lambda}\Upsilon_{1,2}^4 -  \frac{\delta_t\lambda_1\lambda_2}{\lambda}\Upsilon_{3,2}^4,
$\\
$\beta_2^1=\frac{1}{\delta_t\lambda}\Upsilon_{1,2}^1 -  \frac{1}{\delta_t\lambda}\Upsilon_{3,2}^1,
\beta_2^2=-\frac{\lambda_2}{\lambda}\Upsilon_{1,2}^1 +  \frac{\lambda_1}{\lambda}\Upsilon_{3,2}^1,
\beta_2^3= -\frac{1}{\delta_t\lambda}\Upsilon_{1,2}^2 + \frac{1}{\delta_t\lambda}\Upsilon_{3,2}^2,
\beta_2^4=1-\frac{\lambda_2}{\lambda}\Upsilon_{1,2}^2 +  \frac{\lambda_1}{\lambda}\Upsilon_{3,2}^2,
\beta_2^5=\frac{1}{\delta_t\lambda}\Upsilon_{1,2}^3 - \frac{1}{\delta_t\lambda}\Upsilon_{3,2}^3,
\beta_2^6=-\frac{\lambda_2}{\lambda}\Upsilon_{1,2}^3 +  \frac{\lambda_1}{\lambda}\Upsilon_{3,2}^3,
\beta_2^7=-\frac{1}{\delta_t\lambda}\Upsilon_{1,2}^4 +  \frac{1}{\delta_t\lambda}\Upsilon_{3,2}^4,
\beta_2^8=\frac{\lambda_2}{\lambda}\Upsilon_{1,2}^4 -  \frac{\lambda_1}{\lambda}\Upsilon_{3,2}^4,
$ and for $i = 3,\dots, N-3$ we have\\
$\alpha_i^1 = -\frac{\lambda_1}{\lambda}\Upsilon_{1,i}^1 +  \frac{\lambda_2}{\lambda}\Upsilon_{3,i}^1,
\alpha_i^2 = \frac{\delta_t\lambda_1\lambda_2}{\lambda}\Upsilon_{1,i}^1  - \frac{\delta_t\lambda_1\lambda_2}{\lambda}\Upsilon_{3,i}^1,
\alpha_i^3 = \frac{\lambda_1}{\lambda}\Upsilon_{1,i}^2 -  \frac{\lambda_2}{\lambda}\Upsilon_{3,i}^2,
\alpha_i^4 = -\frac{\delta_t\lambda_1\lambda_2}{\lambda}\Upsilon_{1,i}^2 +  \frac{\delta_t\lambda_1\lambda_2}{\lambda}\Upsilon_{3,i}^2,
\alpha_i^5 = 1 - \frac{\lambda_1}{\lambda}\Upsilon_{1,i}^3 + \frac{\lambda_2}{\lambda}\Upsilon_{3,i}^3,
\alpha_i^6 = \frac{\delta_t\lambda_1\lambda_2}{\lambda}\Upsilon_{1,i}^3 -  \frac{\delta_t\lambda_1\lambda_2}{\lambda}\Upsilon_{3,i}^3,
\alpha_i^7 = \frac{\lambda_1}{\lambda}\Upsilon_{1,i}^4 -  \frac{\lambda_2}{\lambda}\Upsilon_{3,i}^4,
\alpha_i^8 = -\frac{\delta_t\lambda_1\lambda_2}{\lambda}\Upsilon_{1,i}^4 +  \frac{\delta_t\lambda_1\lambda_2}{\lambda}\Upsilon_{3,i}^4,
\alpha_i^9 = -\frac{\lambda_1}{\lambda}\Upsilon_{1,i}^5 + \frac{\lambda_2}{\lambda}\Upsilon_{3,i}^5,
\alpha_i^{10} = \frac{\delta_t\lambda_1\lambda_2}{\lambda}\Upsilon_{1,i}^5 -  \frac{\delta_t\lambda_1\lambda_2}{\lambda}\Upsilon_{3,i}^5,
$\\
and 
$\beta_i^1 = -\frac{1}{\delta_t\lambda}\Upsilon_{1,i}^1 +  \frac{1}{\delta_t\lambda}\Upsilon_{3,i}^1,
\beta_i^2 = \frac{\lambda_2}{\lambda}\Upsilon_{1,i}^1  - \frac{\lambda_1}{\lambda}\Upsilon_{3,i}^1,
\beta_i^3 = \frac{1}{\delta_t\lambda}\Upsilon_{1,i}^2 -  \frac{1}{\delta_t\lambda}\Upsilon_{3,i}^2,
\beta_i^4 = -\frac{\lambda_2}{\lambda}\Upsilon_{1,i}^2 +  \frac{\lambda_2}{\lambda}\Upsilon_{3,i}^2,
\beta_i^5 = - \frac{1}{\delta_t\lambda}\Upsilon_{1,i}^3 + \frac{1}{\delta_t\lambda}\Upsilon_{3,i}^3,
\beta_i^6 = 1 + \frac{\lambda_2}{\lambda}\Upsilon_{1,i}^3 -  \frac{\lambda_1}{\lambda}\Upsilon_{3,i}^3,
\beta_i^7 = \frac{1}{\delta_t\lambda}\Upsilon_{1,i}^4 -  \frac{1}{\delta_t\lambda}\Upsilon_{3,i}^4,
\beta_i^8 = -\frac{\lambda_2}{\lambda}\Upsilon_{1,i}^4 +  \frac{\lambda_1}{\lambda}\Upsilon_{3,i}^4,
\beta_i^9 = -\frac{1}{\delta_t\lambda}\Upsilon_{1,i}^5 + \frac{1}{\delta_t\lambda}\Upsilon_{3,i}^5,
\beta_i^{10} = \frac{\lambda_2}{\lambda}\Upsilon_{1,i}^5 -  \frac{\lambda_1}{\lambda}\Upsilon_{3,i}^5,
$\\
and $\alpha_{N-2}^1=-\frac{\lambda_1}{\lambda}\Upsilon_{1,N-2}^1 + \frac{\lambda_2}{\lambda}\Upsilon_{3,N-2}^1,
\alpha_{N-2}^2=\frac{\delta_t\lambda_1\lambda_2}{\lambda}\Upsilon_{1,N-2}^1 -  \frac{\delta_t\lambda_1\lambda_2}{\lambda}\Upsilon_{3,N-2}^1,
\alpha_{N-2}^3=\frac{\lambda_1}{\lambda}\Upsilon_{1,N-2}^2 - \frac{\lambda_2}{\lambda}\Upsilon_{3,N-2}^2,
\alpha_{N-2}^4=-\frac{\delta_t\lambda_1\lambda_2}{\lambda}\Upsilon_{1,N-2}^2 +  \frac{\delta_t\lambda_1\lambda_2}{\lambda}\Upsilon_{3,N-2}^2,
\alpha_{N-2}^5=1-\frac{\lambda_1}{\lambda}\Upsilon_{1,N-2}^3 + \frac{\lambda_2}{\lambda}\Upsilon_{3,N-2}^3,
\alpha_{N-2}^6=\frac{\delta_t\lambda_1\lambda_2}{\lambda}\Upsilon_{1,N-2}^3 -  \frac{\delta_t\lambda_1\lambda_2}{\lambda}\Upsilon_{3,N-2}^3,
\alpha_{N-2}^7=\frac{\lambda_1}{\lambda}\Upsilon_{1,N-2}^4 - \frac{\lambda_2}{\lambda}\Upsilon_{3,N-2}^4,
\alpha_{N-2}^8=-\frac{\delta_t\lambda_1\lambda_2}{\lambda}\Upsilon_{1,N-2}^4 +  \frac{\delta_t\lambda_1\lambda_2}{\lambda}\Upsilon_{3,N-2}^4,
$\\
$\beta_{N-2}^1=-\frac{1}{\delta_t\lambda}\Upsilon_{1,N-2}^1 + \frac{1}{\delta_t\lambda}\Upsilon_{3,N-2}^1,
\beta_{N-2}^2=\frac{\lambda_2}{\lambda}\Upsilon_{1,N-2}^1 -  \frac{\lambda_1}{\lambda}\Upsilon_{3,N-2}^1,
\beta_{N-2}^3=\frac{1}{\delta_t\lambda}\Upsilon_{1,N-2}^2 - \frac{1}{\delta_t\lambda}\Upsilon_{3,N-2}^2,
\beta_{N-2}^4=-\frac{\lambda_2}{\lambda}\Upsilon_{1,N-2}^2 +  \frac{\lambda_1}{\lambda}\Upsilon_{3,N-2}^2,
\beta_{N-2}^5=-\frac{1}{\delta_t\lambda}\Upsilon_{1,N-2}^3 + \frac{1}{\delta_t\lambda}\Upsilon_{3,N-2}^3,
\beta_{N-2}^6=1 + \frac{\lambda_2}{\lambda}\Upsilon_{1,N-2}^3 -  \frac{\lambda_1}{\lambda}\Upsilon_{3,N-2}^3,
\beta_{N-2}^7=\frac{1}{\delta_t\lambda}\Upsilon_{1,N-2}^4 - \frac{1}{\delta_t\lambda}\Upsilon_{3,N-2}^4,
\beta_{N-2}^8=-\frac{\lambda_2}{\lambda}\Upsilon_{1,N-2}^4 +  \frac{\lambda_1}{\lambda}\Upsilon_{3,N-2}^4,
$\\
$\alpha_{N-1}^1=-\frac{\lambda_1}{\lambda}\Upsilon_{1,N-1}^1 + \frac{\lambda_2}{\lambda}\Upsilon_{3,N-1}^1,
\alpha_{N-1}^2=\frac{\delta_t\lambda_1\lambda_2}{\lambda}\Upsilon_{1,N-1}^1 -  \frac{\delta_t\lambda_1\lambda_2}{\lambda}\Upsilon_{3,N-1}^1,
\alpha_{N-1}^3=\frac{\lambda_1}{\lambda}\Upsilon_{1,N-1}^2 - \frac{\lambda_2}{\lambda}\Upsilon_{3,N-1}^2,
\alpha_{N-1}^4=-\frac{\delta_t\lambda_1\lambda_2}{\lambda}\Upsilon_{1,N-1}^2 +  \frac{\delta_t\lambda_1\lambda_2}{\lambda}\Upsilon_{3,N-1}^2,
\alpha_{N-1}^5=1-\frac{\lambda_1}{\lambda}\Upsilon_{1,N-1}^3 + \frac{\lambda_2}{\lambda}\Upsilon_{3,N-1}^3,
\alpha_{N-1}^6=\frac{\delta_t\lambda_1\lambda_2}{\lambda}\Upsilon_{1,N-1}^3 -  \frac{\delta_t\lambda_1\lambda_2}{\lambda}\Upsilon_{3,N-1}^3,
$\\
$\beta_{N-1}^1=-\frac{1}{\delta_t\lambda}\Upsilon_{1,N-1}^1 + \frac{1}{\delta_t\lambda}\Upsilon_{3,N-1}^1,
\beta_{N-1}^2=\frac{\lambda_2}{\lambda}\Upsilon_{1,N-1}^1 -  \frac{\lambda_1}{\lambda}\Upsilon_{3,N-1}^1,
\beta_{N-1}^3=\frac{1}{\delta_t\lambda}\Upsilon_{1,N-1}^2 - \frac{1}{\delta_t\lambda}\Upsilon_{3,N-1}^2,
\beta_{N-1}^4=-\frac{\lambda_2}{\lambda}\Upsilon_{1,N-1}^2 +  \frac{\lambda_1}{\lambda}\Upsilon_{3,N-1}^2,
\beta_{N-1}^5=-\frac{1}{\delta_t\lambda}\Upsilon_{1,N-1}^3 + \frac{1}{\delta_t\lambda}\Upsilon_{3,N-1}^3,
\beta_{N-1}^6=1 +\frac{\lambda_2}{\lambda}\Upsilon_{1,N-1}^3 -  \frac{\lambda_1}{\lambda}\Upsilon_{3,N-1}^3,
$\\
where for $j=1,3,$ and  for $i = 3,\dots, N-3$ we have 
$\Upsilon_{j,i}^1 = \frac{1}{\sigma_{j,i}\sigma_{j,i-1}},
\Upsilon_{j,i}^2 = 2\frac{\gamma_{j,i}}{\sigma_{j,i}^2} + 
\frac{\gamma_{j,i-1}}{\sigma_{j,i-1}\sigma_{j,i}} + 
 \frac{\gamma_{j,i+1}}{\sigma_{j,i}\sigma_{j,i+1}}, 
\Upsilon_{j,i}^3 = \frac{\gamma_{j,i}^2}{\sigma_{j,i}^2} + 
\frac{\gamma_{j,i+1}^2}{\sigma_{j,i+1}^2} + 
 2\frac{\gamma_{j,i}\gamma_{j,i+1}}{\sigma_{j,i}\sigma_{j,i+1}} + \frac{1}{\sigma_{j,i}^2} + \frac{1}{\sigma_{j,i+1}^2}, 
\Upsilon_{j,i}^4 = \frac{\gamma_{j,i}}{\sigma_{j,i}\sigma_{j,i+1}} + 
\frac{\gamma_{j,i+2}}{\sigma_{j,i+1}\sigma_{j,i+2}} + 
 2\frac{\gamma_{j,i+1}}{\sigma_{j,i+1}^2},
\Upsilon_{j,i}^5 = \frac{1}{\sigma_{j,i+1}\sigma_{j,i+2}}$,
\\
and we have $\Upsilon_{j,1}^1 = 1+ \frac{\gamma_{j,1}\gamma_{j,2}}{\sigma_{j,1}\sigma_{j,2}} + \frac{\sigma_{j,1}\gamma_{j,2}}{\gamma_{j,1}\sigma_{j,2}} + \frac{\gamma_{j,2}^2}{\sigma_{j,2}^2} + \frac{1}{\sigma_{j,2}^2},
\Upsilon_{j,1}^2=\frac{\gamma_{j,1}}{\sigma_{j,1}^2} + 2\frac{\gamma_{j,2}}{\sigma_{j,2}^2} + \frac{\gamma_{j,3}}{\sigma_{j,2}\sigma_{j,3}},
\Upsilon_{j,1}^3 = \frac{1}{\sigma_{j,2}\sigma_{j,3}},\\
\Upsilon_{j,2}^1 = \frac{\sigma_{j,1}}{\sigma_{j,2}\gamma_{j,1}} + 2\frac{\gamma_{j,2}}{\sigma_{j,2}^2} + \frac{\gamma_{j,3}}{\sigma_{j,2}\sigma_{j,3}},
\Upsilon_{j,2}^2 =  \frac{\gamma_{j,2}^2}{\sigma_{j,2}^2} + 2\frac{\gamma_{j,2}\gamma_{j,3}}{\sigma_{j,2}\sigma_{j,3}} + \frac{1}{\sigma_{j,2}^2} + \frac{1}{\sigma_{j,3}^2} + \frac{\gamma_{j,3}^2}{\sigma_{j,3}^2},\\
\Upsilon_{j,2}^3 = \frac{\gamma_{j,2}}{\sigma_{j,2}\sigma_{j,3}} + 2\frac{\gamma_{j,3}}{\sigma_{j,3}^2} + \frac{\gamma_{j,4}}{\sigma_{j,3}\sigma_{j,4}},
\Upsilon_{j,2}^4= \frac{1}{\sigma_{j,3}\sigma_{j,4}}, \text{and}\\
\Upsilon_{j,N-2}^1 = \frac{1}{\sigma_{j,N-3}\sigma_{j,N-2}},
\Upsilon_{j,N-2}^2 = 2\frac{\gamma_{j,N-2}}{\sigma_{j,N-2}^2} + \frac{\gamma_{j,N-3}}{\sigma_{j,N-3}\sigma_{j,N-2}} + \frac{\gamma_{j,N-1}}{\sigma_{j,N-2}\sigma_{j,N-1}},
\Upsilon_{j,N-2}^3 = \frac{\gamma_{j,N-2}^2}{\sigma_{j,N-2}^2} + 2\frac{\gamma_{j,N-2}\gamma_{j,N-1}}{\sigma_{j,N-2}\sigma_{j,N-1}} + \frac{1}{\sigma_{j,N-2}^2} +\frac{1}{\sigma_{j,N-1}^2} + \frac{\gamma_{j,N-1}^2}{\sigma_{j,N-1}^2},
\Upsilon_{j,N-2}^4 = 2\frac{\gamma_{j,N-1}}{\sigma_{j,N-1}^2} + \frac{\gamma_{j,N-2}}{\sigma_{j,N-2}\sigma_{j,N-1}} + \frac{\sigma_{j,N}}{\gamma_{j,N}\sigma_{j,N-1}},\\
\Upsilon_{j,N-1}^1 = \frac{1}{\sigma_{j,N-2}\sigma_{j,N-1}},
\Upsilon_{j,N-1}^2 = 2\frac{\gamma_{j,N-1}}{\sigma_{j,N-1}^2} + \frac{\gamma_{j,N-2}}{\sigma_{j,N-2}\sigma_{j,N-1}} + \frac{\gamma_{j,N}}{\sigma_{j,N-1}\sigma_{j,N}},
\Upsilon_{j,N-1}^3= 1+ \frac{\gamma_{j,N-1}^2}{\sigma_{j,N-1}^2} + \frac{\sigma_{j,N-1}\sigma_{j,N}}{\gamma_{j,N-1}\gamma_{j,N}} +\frac{1}{\sigma_{j,N-1}^2}+ \frac{\gamma_{j,N-1}\gamma_{j,N}}{\sigma_{j,N-1}\sigma_{j,N}},
$
for $j=1, 3$, and $\sigma_{1,i}:=\sinh(\xi_1d_{i}), \sigma_{3,i}:=\sinh(\xi_3d_{i}), \gamma_{1,i}:=\cosh(\xi_1d_{i}), \gamma_{3,i}:=\cosh(\xi_3d_{i})$ for $i=1\cdots N$.

\bibliographystyle{siam}
\bibliography{dn2nnm}


\end{document}